\newcommand{\proofbox}{\blacksquare}
\DeclareMathAlphabet{\mathcal}{OMS}{zplm}{m}{n}
\newcommand{\real}{\mathbb{R}}
\DeclareMathOperator{\diag}{diag}
\newcommand{\mat}[1]{\boldsymbol{#1}}
\renewcommand{\vec}[1]{\boldsymbol{#1}}
\newcommand{\norm}[1]{\mleft\| #1 \mright\|}
\newcommand{\expmat}[1]{\begin{bmatrix} #1 \end{bmatrix}}
\newcommand{\twobytwo}[4]{\expmat{#1 & #2 \\ #3 & #4}}
\newcommand{\twobyone}[2]{\expmat{#1 \\ #2}}
\newcommand{\onebytwo}[2]{\expmat{#1 & #2}}
\newcommand{\Id}{\mathbf{I}}
\newcommand{\order}{\mathcal{O}}
\DeclareMathOperator*{\argmin}{argmin}
\newcommand{\set}[1]{\mathsf{#1}}
\renewcommand{\hat}[1]{\widehat{#1}}
\newcommand{\actionbox}[1]{\begin{center} \vspace{0.5pc}
\fbox{ \begin{minipage}{0.9\textwidth}
\begin{center}#1\end{center}
\end{minipage}
} \vspace{0.5pc}
\end{center}}
\newcommand{\dta}[1]{\mat{\Delta}{\mat{#1}}}
\DeclareMathOperator{\range}{range}
\newcommand{\QR}{\textsf{QR}\xspace}
\newcommand{\LU}{\textsf{LU}\xspace}
\newcommand{\err}{\mathrm{err}}
\newcommand{\rerr}{\mathrm{rerr}}
\newif\ifsiam
\newcommand{\isiam}[1]{\ifsiam #1 \fi}
\newcommand{\iarxiv}[1]{\ifsiam \else #1 \fi}
\newcommand{\siamorarxiv}[2]{\ifsiam #1 \else #2 \fi}
\def\th@plain{%
  \thm@notefont{}
  \itshape 
}
\def\th@definition{%
  \thm@notefont{}
  \normalfont 
}
\crefname{equation}{}{}
\crefname{section}{section}{sections}
\crefname{appendix}{appendix}{appendices}
\newcommand*{\email}[1]{\href{mailto:#1}{\nolinkurl{#1}} } 
\declaretheorem[name=Theorem]{theorem}
\declaretheorem[name=Informal Theorem]{inftheorem}
\declaretheorem[name=Proposition,numberlike=theorem]{proposition}
\declaretheorem[name=Fact,numberlike=theorem]{fact}
\declaretheorem[name=Lemma,numberlike=theorem]{lemma}
\theoremstyle{remark}
\theoremstyle{definition}
\declaretheorem[name=Definition,numberlike=theorem]{definition}
\newmdtheoremenv{question}{Question}
\numberwithin{equation}{section}
\crefname{fact}{Fact}{Facts}
\Crefname{fact}{Fact}{Facts}
\title{Fast and Forward Stable Randomized Algorithms \\ for Linear Least-Squares Problems\thanks{This work was supported by NSF FRG 1952777, the Carver Mead New Horizons Fund, and the U.S. Department of Energy, Office of Science, Office of Advanced Scientific Computing Research, Department of Energy Computational Science Graduate Fellowship under Award Number DE-SC0021110.}}
\author{Ethan N. Epperly\thanks{Division of Computing and Mathematical Sciences, California Institute of Technology, Pasadena, CA 91125 USA (\email{eepperly@caltech.edu}, \url{https://ethanepperly.com})}}
\date{\today}
\begin{document}

\maketitle

\begin{abstract}
    Iterative sketching and sketch-and-precondition are randomized algorithms used for solving overdetermined linear least-squares problems.
When implemented in exact arithmetic, these algorithms produce high-accuracy solutions to least-squares problems faster than standard direct methods based on \QR factorization.
Recently, Meier, Nakatsukasa, Townsend, and Webb demonstrated numerical instabilities in a version of sketch-and-precondition in floating point arithmetic (\href{https://arxiv.org/abs/2302.07202}{arXiv:2302.07202}).
The work of Meier et al.\ raises the question: Is there a randomized least-squares solver that is both fast and stable?
This paper resolves this question in the affirmative by proving that iterative sketching, appropriately implemented, is forward stable.
Numerical experiments confirm the theoretical findings, demonstrating that iterative sketching is stable and faster than \QR-based solvers for large problem instances.

\end{abstract}

\section{Introduction} \label{sec:introduction}

The overdetermined linear least-squares problem \cite{Bjo96}
\begin{equation} \label{eq:ls}
  \vec{x} = \argmin_{\vec{y} \in \real^n} \norm{\vec{b} - \mat{A}\vec{y}} \quad \text{for $\mat{A}\in\real^{m\times n}$, $\vec{b} \in \real^m$, $m\ge n$}
\end{equation}
is one of the core problems in computational mathematics.
Here, $\norm{\cdot}$ denotes the Euclidean norm.
The textbook algorithm for this problem is by Householder \QR factorization \cite[\S5.2.2]{GV13}, which runs in $\order(mn^2)$ operations.

\begin{algorithm}[t]
  \caption{Sketch-and-precondition \label{alg:spre}}
  \begin{algorithmic}[1]
    \Require Matrix $\mat{A} \in \real^{m\times n}$, right-hand side $\vec{b} \in \real^m$, embedding dimension $d$, iteration count $q$
    \Ensure Approximation solution $\vec{x}_q$ to least-squares problem
    \State $\mat{S} \gets \text{$d\times m$ subspace embedding}$
    \State $\mat{B} \gets \mat{S}\mat{A}$
    \State $(\mat{Q},\mat{R}) \gets \Call{QR}{\mat{B}, \texttt{'econ'}}$
    \State $\vec{x}_0 \gets \mat{R}^{-1}(\mat{Q}^\top (\mat{S}\vec{b}))$ \Comment{Sketch-and-solve initialization (\cref{sec:sketch-and-solve})}
    \State $\vec{x}_q \gets \Call{LSQR}{\mat{A},\vec{b},\texttt{initial}=\vec{x}_0,\texttt{iterations}=q,\texttt{preconditioner}=\mat{R}}$
  \end{algorithmic}
\end{algorithm}

In the past two decades, researchers in the field of \emph{randomized numerical linear algebra} \cite{MT20a,MDM+22} have developed least-squares solvers that are faster than Householder \QR.
The core ingredient for these methods is a \emph{fast random embedding} \cite[\S9]{MT20a}, informally defined to be a random matrix $\mat{S} \in \real^{d\times m}$ satisfying the following properties:
\begin{enumerate}
\item With high probability, $\norm{\mat{S}\vec{v}} \approx \norm{\vec{v}}$ for all vectors $\vec{v} \in \range(\onebytwo{\mat{A}}{\vec{b}})$.
\item The output dimension $d$ is a small multiple of $n$ or $n \log n$.
\item Matrix--vector products $\vec{v} \mapsto \mat{S}\vec{v}$ can be computed in $\order(m\log m)$ operations.
\end{enumerate}
\Cref{sec:subspace} provides a precise definition and constructions of fast random embeddings.
Random embeddings are a core ingredient in randomized least-squares solvers:
\begin{itemize}
\item \textbf{Sketch-and-precondition \cite{RT08,AMT10}.} Apply an embedding $\mat{S}$ to $\mat{A}$ and compute an (economy-size) \QR decomposition 
\begin{equation} \label{eq:sketch-and-precondition-QR}
  \mat{S}\mat{A} = \mat{Q}\mat{R}.
\end{equation}
Use the matrix $\mat{R}$ as a right-preconditioner to solve \cref{eq:ls} by the LSQR iterative method \cite{PS82}, a Krylov method for least-squares. Details are provided in \cref{alg:spre}.
\item \textbf{Iterative sketching \cite{PW16a,OPA19,LP21}.}
Starting from an initial solution $\vec{x}_0\in\real^n$, generate iterates $\vec{x}_1,\vec{x}_2,\ldots$ by solving a sequence of minimization problems
\begin{equation} \label{eq:it-sk-seq-min}
  \vec{x}_{i+1} = \argmin_{\vec{y} \in \real^n} \norm{(\mat{S}\mat{A})\vec{y}}^2 - \vec{y}^\top \mat{A}^\top (\vec{b} - \mat{A}\vec{x}_i) \quad \text{for } i = 0,1,2,\ldots.
\end{equation}
Note that this version of iterative sketching uses a single random embedding $\mat{S}$ for all iterations. See \cref{alg:it-sk} for a stable implementation developed in the present work.
\end{itemize}
When implemented in exact arithmetic and with the appropriate initialization $\vec{x}_0$ (\cref{eq:sketched_ls} below), both sketch-and-precondition and iterative sketching produce a solution $\vec{x}_q$ which satisfies
\begin{equation*}
  \norm{\vec{r}(\vec{x}_q)} \le (1+\delta) \cdot \norm{\vec{r}(\vec{x})},
\end{equation*}
where $\delta > 0$ is an accuracy parameter that can be made smaller by running for more iterations.
Here, and going forward, $\vec{r}(\vec{y}) \coloneqq \vec{b} - \mat{A}\vec{y}$ denotes the residual for a vector $\vec{y}$.
With appropriate choice of $\mat{S}$, both sketch-and-precondition and iterative sketching have \emph{the same} asymptotic cost of
\begin{equation*}
  q = \order(\log(1/\delta)) \text{ iterations} \quad \text{and} \quad \order(mn \log (n/\delta) + n^3 \log n) \text{ operations}.
\end{equation*}
In principle, these algorithms allow us to solve least-squares problems faster than Householder \QR.

In recent work \cite{MNTW23}, Meier, Nakatsukasa, Townsend, and Webb studied the \emph{numerical stability} of randomized least-squares solvers.
Their conclusions were as follows:
\begin{itemize}
    \item Sketch-and-precondition with the zero initialization $\vec{x}_0 = \vec{0}$ is \emph{numerically unstable}.
    When implemented in floating point arithmetic, it produces errors $\norm{\vec{x} - \vec{\hat{x}}}$ and residuals $\norm{\vec{r}(\vec{\hat{x}})}$ much larger than Householder \QR factorization.
    \item Empirically, the sketch-and-solve initialization (see \cref{eq:sketched_ls} below) significantly improves the stability of sketch-and-precondition, but the method still fails to be backward stable.
    \item Meier et al.\ introduce sketch-and-apply, a slower, backward stable version of sketch-and-precondition that runs in $\order(mn^2)$ operations, the same as Householder \QR.
\end{itemize}
For a randomized least-squares solver to see wide deployment in general-purpose software, it is desirable to have an algorithm that is both fast and stable.
The work of Meier et al.\ raises doubts about whether existing randomized least-squares solvers meet both criteria, leading to the question:
\begin{question} \label{que:main}
\vspace{-1.2ex}Is there a randomized least-squares algorithm that is both (asymptotically) faster than Householder \QR and numerically stable?
\end{question}
In this paper, we answer \cref{que:main} in the affirmative, showing that a careful implementation of \textbf{iterative sketching} achieves both criteria.
\begin{algorithm}[t]
  \caption{Iterative sketching \label{alg:it-sk}}
  \begin{algorithmic}[1]
    \Require Matrix $\mat{A} \in \real^{m\times n}$, right-hand side $\vec{b} \in \real^m$, embedding dimension $d$, iteration count $q$
    \Ensure Approximation solution $\vec{x}_q$ to least-squares problem
    \State $\mat{S} \gets \text{$d\times m$ subspace embedding}$
    \State $\mat{B} \gets \mat{S}\mat{A}$
    \State $(\mat{Q},\mat{R}) \gets \Call{QR}{\mat{B}, \texttt{'econ'}}$
    \State $\vec{x}_0 \gets \mat{R}^{-1}(\mat{Q}^\top (\mat{S}\vec{b}))$\Comment{Sketch-and-solve initialization (\cref{sec:sketch-and-solve})}
    \For{$i = 0,1,2,\ldots,q-1$}
    \State $\vec{r}_i \gets \vec{b} - \mat{A}\vec{x}_i$
    \State $\vec{c}_i \gets \mat{A}^\top \vec{r}_i$ 
    \State $\vec{d}_i \gets \mat{R}^{-1}(\mat{R}^{-\top}\vec{c}_i)$ \Comment{Two triangular solves}
    \State $\vec{x}_{i+1} \gets \vec{x}_i + \vec{d}_i$ \label{line:update}
    \EndFor
  \end{algorithmic}
\end{algorithm}
Our main contributions are as follows:
\begin{enumerate}
\item \textbf{Stable implementation.} We provide a stable implementation of iterative sketching in \cref{alg:it-sk}.
  We demonstrate the importance of using our stable implementation in \cref{fig:bad}, where we show two other plausible implementations that fail to be numerically stable.
\item \textbf{Proof of forward stability.} We prove that iterative sketching is \emph{forward stable}: 
  \begin{inftheorem}[Iterative sketching is forward stable] \label{thm:it-sk-informal}
    In floating point arithmetic, \cref{alg:it-sk} produces iterates $\vec{\hat{x}}_0,\vec{\hat{x}}_1,\ldots$ whose error $\norm{\vec{x} - \vec{\hat{x}}_i}$ and residual error $\norm{\vec{r}(\vec{x}) - \vec{r}(\vec{\hat{x}}_i)}$ converge geometrically until they reach roughly the same accuracy as Householder \QR.
  \end{inftheorem}
  See \cref{thm:it_sk_forward} for a precise statement.
  The forward stability of iterative sketching is a weaker guarantee than the backward stability (see \cref{eq:ls_backward}) of Householder \QR.
  Despite this, a forward-stable least-squares solver is adequate for many applications.
  In particular, the residual norms $\norm{\vec{r}(\vec{\hat{x}})}$ for the solutions $\vec{\hat{x}}$ produced by iterative sketching and Householder \QR are comparably small.
  Our stability analysis for iterative sketching is modelled after the Bj\"ork's analysis of the delicate numerical properties the \emph{corrected seminormal equation} method \cite{Bjo87}.
\end{enumerate}
This work is the first to show a randomized least-squares solver is both numerically stable, in any sense, and (asymptotically) faster than Householder \QR, affirmatively resolving \cref{que:main}.

\subsection{Numerical evidence} \label{sec:numerical}

In this section, we demonstrate the forward stability of iterative sketching numerically.

\subsubsection{Experimental setup}
\label{sec:exp-setup}

We adopt a similar setup to \cite[Fig.~1.1]{MNTW23}.
Set $m\coloneqq 4000$ and $n \coloneqq 50$, and choose parameters $\kappa \ge 1$ for the condition number of $\mat{A}$ and $\beta \ge 0$ for the residual norm $\norm{\vec{r}(\vec{x})}$.
To generate $\mat{A}$, $\vec{x}$, and $\vec{b}$, do the following:
\begin{itemize}
\item Choose Haar random orthogonal matrices $\mat{U} = \onebytwo{\mat{U}_1}{\mat{U}_2} \in \real^{m\times m}$ and $\mat{V} \in \real^{n\times n}$, and partition $\mat{U}$ so that $\mat{U}_1 \in\real^{m\times n}$.
\item Set $\mat{A}\coloneqq\mat{U}_1\mat{\Sigma}\mat{V}^\top$ where $\mat{\Sigma}$ is a diagonal matrix with logarithmically equispaced entries between $1$ and $1/\kappa$.
\item Form vectors $\vec{w} \in \real^n,\vec{z} \in\real^{m-n}$ with independent standard Gaussian entries.
\item Define the solution $\vec{x} \coloneqq \vec{w} / \norm{\vec{w}}$, residual $\vec{r}(\vec{x}) = \beta \cdot \mat{U}_2 \vec{z} / \norm{\mat{U}_2 \vec{z}}$, and right-hand side $\vec{b} \coloneqq \mat{A}\vec{x} + \vec{r}(\vec{x})$.
\end{itemize}
For all randomized methods, we use a sparse sign embedding $\mat{S} \in \real^{d \times n}$ with sparsity $\zeta \coloneqq 8$ and embedding dimension $d \coloneqq 20n$ (see \cref{sec:subspace}).
All experiments are performed in double precision ($u\approx 10^{-16}$) in MATLAB 2023b on a Macbook Pro with Apple M3 Pro and 18 GB of memory.
Code for all experiments can be found at:
\actionbox{\url{https://github.com/eepperly/Iterative-Sketching-Is-Stable}}

\subsubsection{Error metrics}

What does it mean to solve a least-squares problem accurately?
Unfortunately, there is not one single error metric for least-squares problems that is appropriate in all contexts.
In fact, there are three useful error metrics:
\begin{itemize}
    \item \textbf{Forward error.} Perhaps the most natural error metric is the (relative) forward error
    \begin{equation*}
        \operatorname{FE}(\vec{\hat{x}}) \coloneqq \frac{\norm{\vec{x}-\vec{\hat{x}}}}{\norm{\vec{x}}}.
    \end{equation*}
    The forward error quantifies how close the computed solution $\vec{\hat{x}}$ is to the true solution $\vec{x}$.
    \item \textbf{Residual error.} The (relative) residual error is 
    \begin{equation*}
        \operatorname{RE}(\vec{\hat{x}}) \coloneqq \frac{\norm{\vec{r}(\vec{x})-\vec{r}(\vec{\hat{x}})}}{\norm{\vec{r}(\vec{x})}}.
    \end{equation*}
    The residual error measures the \emph{suboptimality} of $\vec{\hat{x}}$ as a solution to the least-squares minimization problem \cref{eq:ls}, according to the relation:
    \begin{equation*}
        \norm{\vec{r}(\vec{\hat{x}})} = \sqrt{ 1 + \operatorname{RE}(\vec{\hat{x}})^2 } \cdot \norm{\vec{r}(\vec{x})}.
    \end{equation*}
    \item \textbf{Backward error.} The (relative) backward error \cite[\S20.7]{Hig02} is 
    \begin{equation*}
      \operatorname{BE}(\vec{\hat{x}}) \coloneqq \min \left\{ \frac{\norm{\dta{A}}_{\rm F}}{\norm{\mat{A}}_{\rm F}} : \vec{\hat{x}} = \argmin_{\vec{v}} \norm{\vec{b} - (\mat{A}+\dta{A})\vec{v}} \right\}.
    \end{equation*}
    If the backward error is small, then $\vec{\hat{x}}$ is the \emph{true solution to nearly the right} least-squares problem.
    Informally, a method is said to be \emph{backward stable} if $\operatorname{BE}(\vec{\hat{x}})\approx u$, where $u$ is the unit roundoff ($u\approx 10^{-16}$ in double precision).
\end{itemize}
There are applications for which each of these error metrics is the appropriate measure of accuracy.
Wedin's perturbation theorem (\cref{thm:wedin}) bounds the forward and residual errors in terms of the backward error, so achieving a small backward error (i.e., backward stability) is considered the gold standard stability property for a least-squares algorithm.
Informally, a method is called \emph{forward stable} if the forward and residual errors are as small as guaranteed by Wedin's theorem for a backward stable method.

\subsubsection{Iterative sketching is forward stable}
\label{sec:it-sk-init-numerical}

\begin{figure}[t]
  \centering
  
  $\norm{\vec{r}(\vec{x})} = 10^{-12}$
  
  \includegraphics[width=0.45\textwidth]{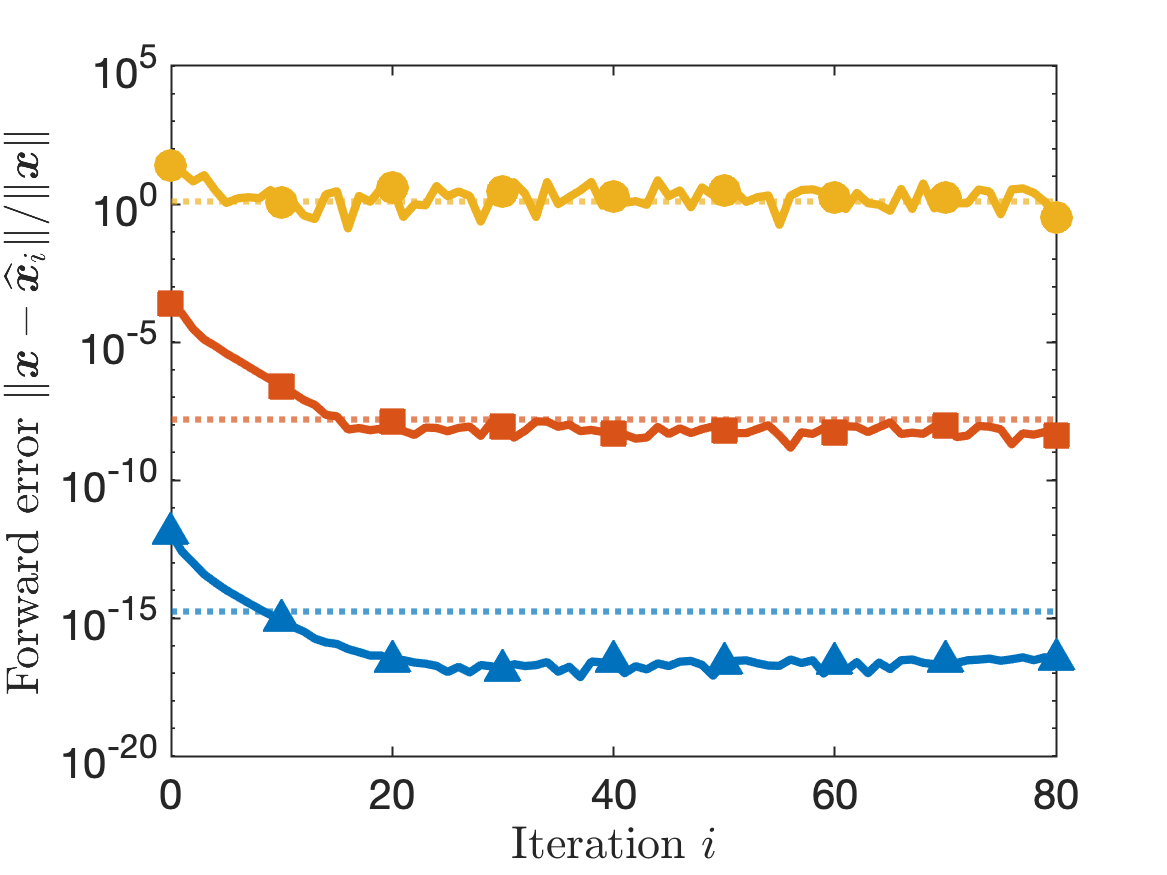}
  \includegraphics[width=0.45\textwidth]{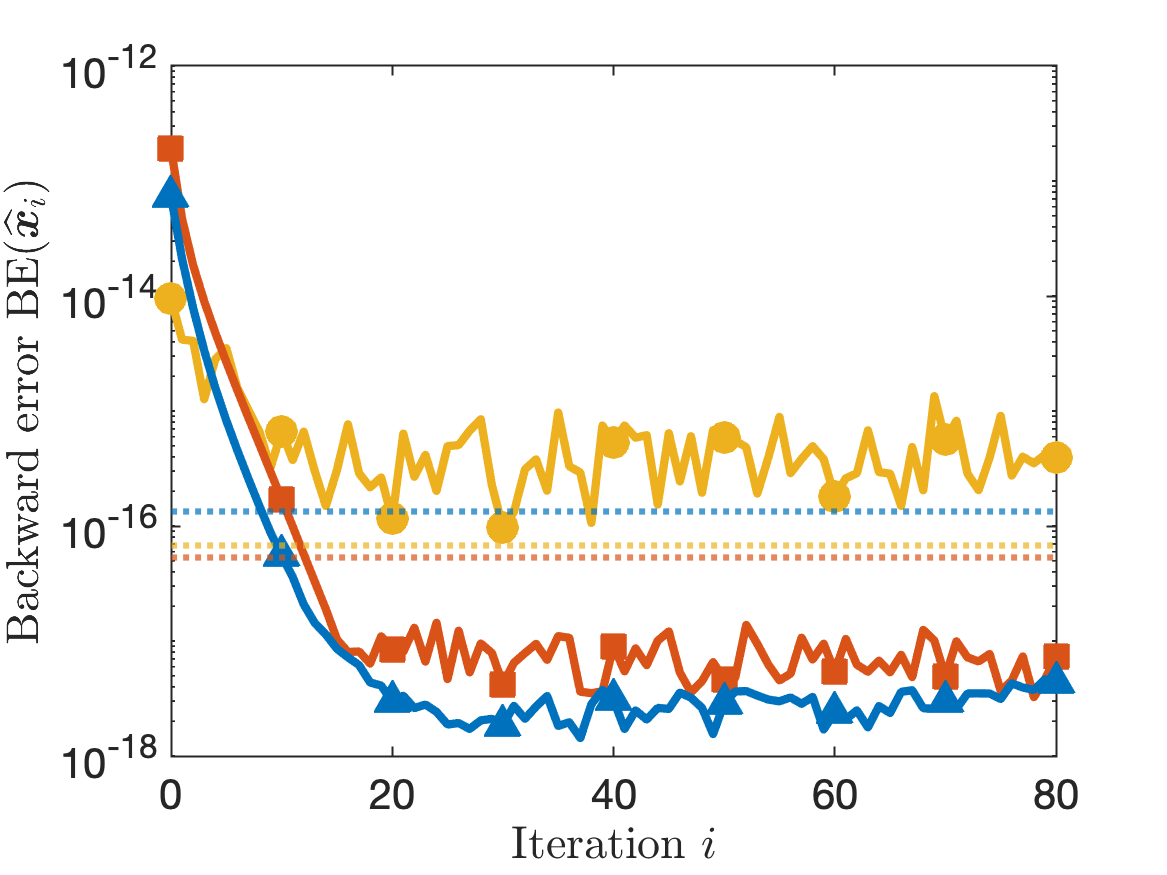} 
  
  $\norm{\vec{r}(\vec{x})} = 10^{-3}$
  
  \includegraphics[width=0.45\textwidth]{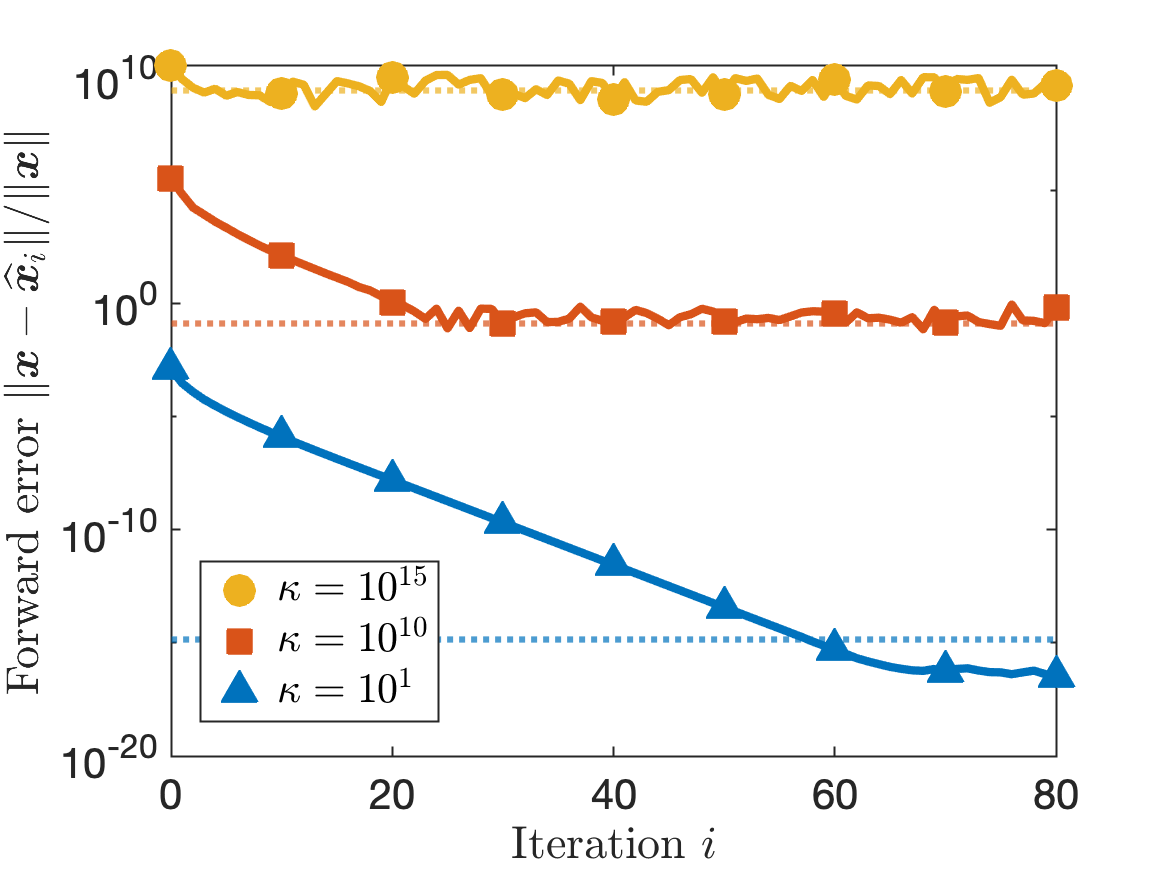}
  \includegraphics[width=0.45\textwidth]{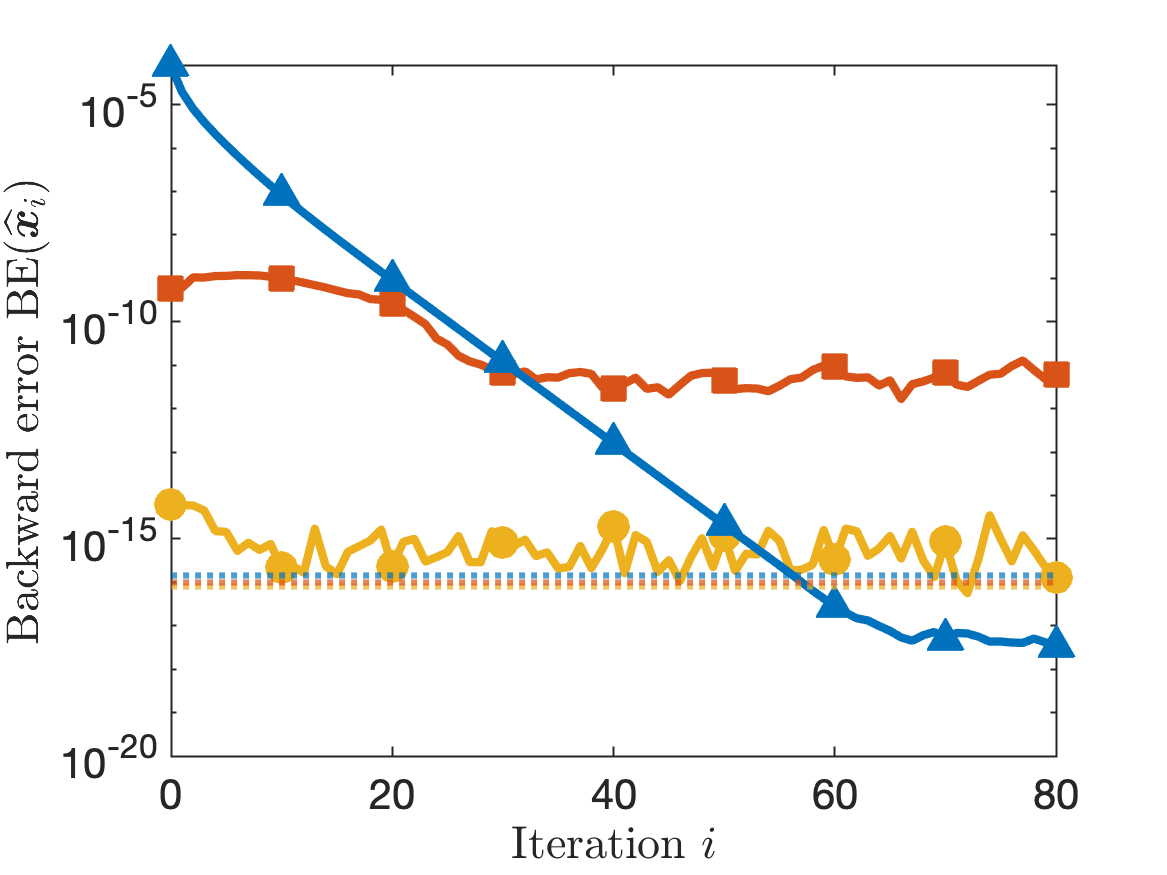} 
  
  \caption{Forward (\emph{left}) and backward (\emph{right}) error for iterative sketching (solid lines).
    We consider three different condition numbers $\kappa = 10^1$ (blue triangles), $\kappa = 10^{10}$ (red squares), and $\kappa = 10^{15}$ (yellow circles) and two residuals $\norm{\vec{r}(\vec{x})} = 10^{-12}$ (\emph{top}) and $\norm{\vec{r}(\vec{x})} = 10^{-3}$ (\emph{bottom}).
    Reference accuracies for Householder \QR are shown as dotted lines.} \label{fig:numerical}
\end{figure}

\Cref{fig:numerical} shows the forward error $\operatorname{FE}(\vec{\hat{x}})$ (\emph{left}) and backward error $\operatorname{BE}(\vec{\hat{x}})$ (\emph{right}) for the computed iterative sketching iterates $\vec{\hat{x}}_0,\vec{\hat{x}}_1,\ldots$ produced by the stable implementation from \cref{alg:it-sk}.
(The residual error as well is shown \siamorarxiv{in the supplementary material, \cref{fig:numerical_full}.)}{below in \cref{fig:numerical_full}.)}
Colors and markers distinguish three values $\kappa = 10^1,10^{10},10^{15}$, and we test two values of $\norm{\vec{r}(\vec{x})}$ (\emph{top} and \emph{bottom}).
For each value of $\kappa$, a references accuracy for Householder \QR is shown as a dashed line.
Our takeaways are as follows:
\begin{itemize}
\item \textbf{Forward stability, geometric convergence.} The forward and residual errors converge geometrically until they reach or fall below the error of Householder \QR, a backward stable method.
  In particular, we confirm the forward stability of iterative sketching (\cref{thm:it_sk_forward}).
\item \textbf{Backward stability?}
  In our results, we achieve a backward error comparable with Householder \QR when the residual is small $\norm{\vec{r}(\vec{x})} = 10^{-12}$.
  However, for large residual $\norm{\vec{r}(\vec{x})} = 10^{-3}$, the backward error for iterative sketching can be much larger than Householder \QR.
  We conclude that iterative sketching is not backward stable.
\end{itemize}

\subsubsection{Comparison to sketch-and-precondition}
\label{sec:other}

\begin{figure}[t]
  \centering
  \includegraphics[width=0.45\textwidth]{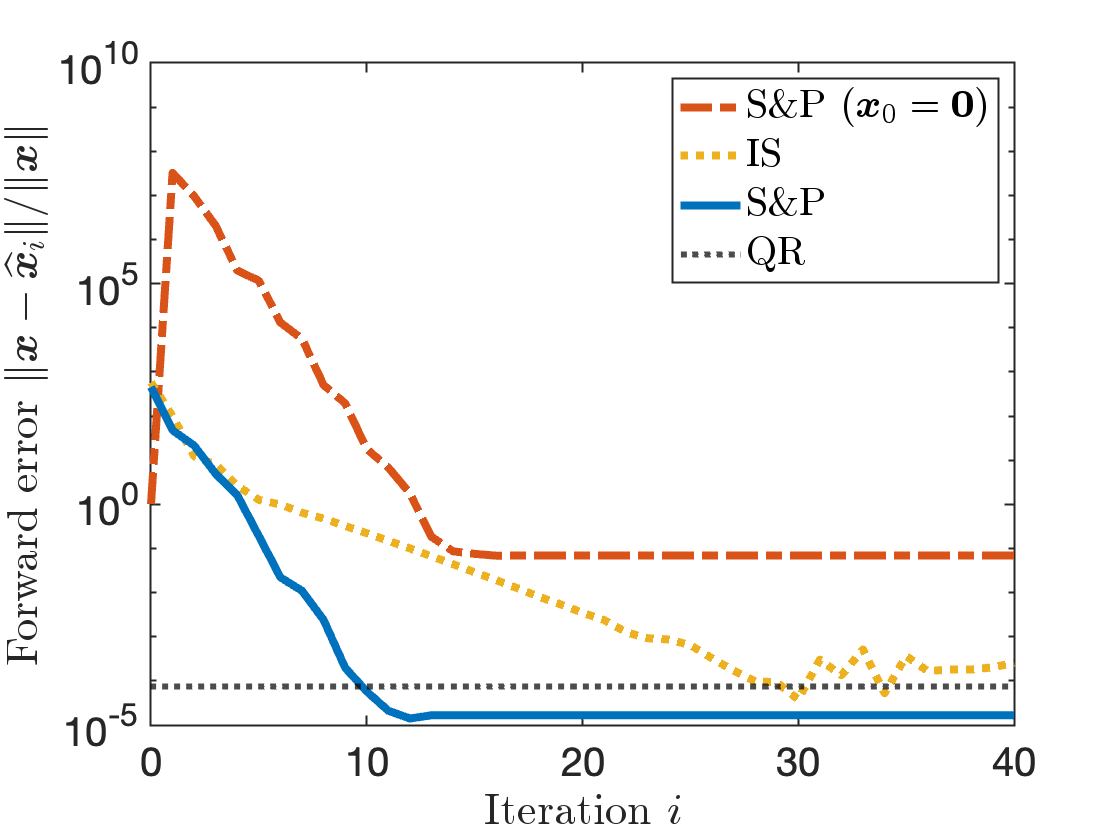}
  \includegraphics[width=0.45\textwidth]{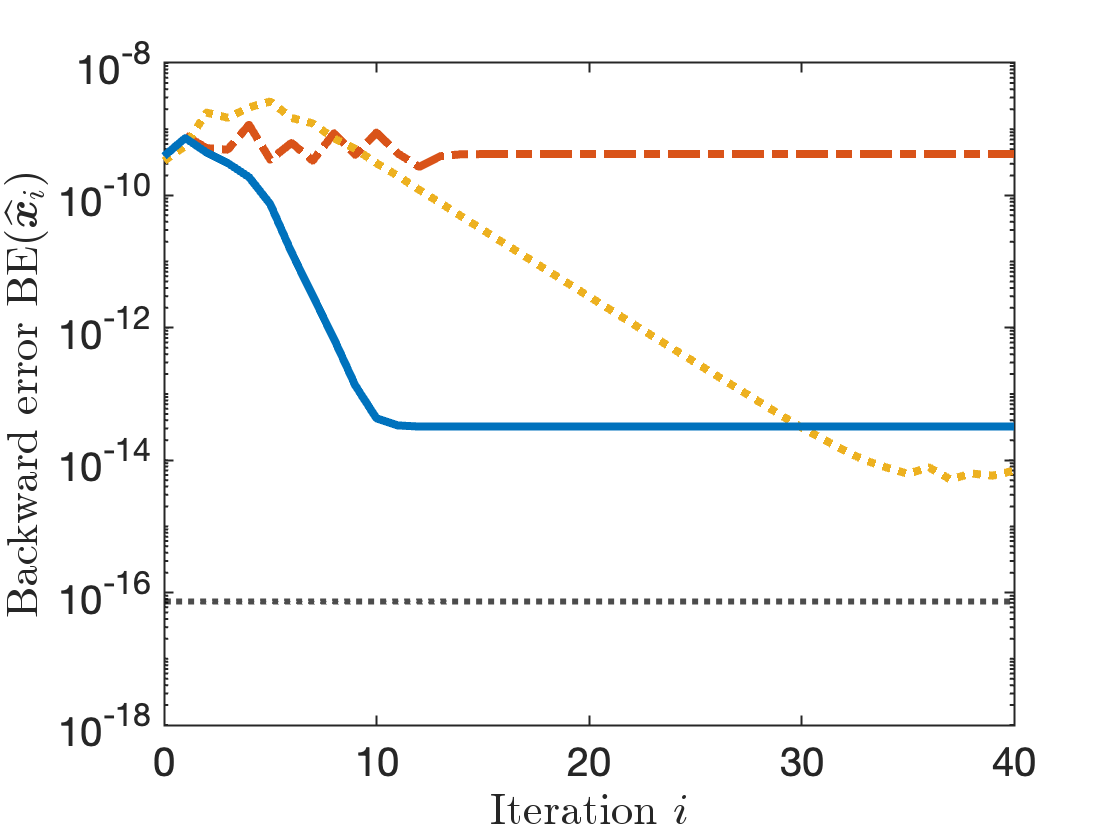} 
    
  \caption{Forward (\emph{left}) and backward (\emph{right}) error for sketch-and-precondition (\textbf{S{\&}P}) and iterative sketching (\textbf{IS}, yellow dotted) with $\kappa = 10^{10}$ and $\norm{\vec{r}(\vec{x})} = 10^{-6}$.
    We consider sketch-and-precondition with both the zero initialization (red dashed) and the sketch-and-solve initialization (blue solid).
    Reference errors for Householder \QR are shown as black dotted lines.} \label{fig:other}
\end{figure}

\Cref{fig:other} compares the accuracy of iterative sketching (\textbf{IS}, \cref{alg:it-sk}) and sketch-and-precondition (\textbf{S\&P}) with both the zero intialization $\vec{x}_0 = \vec{0}$ and sketch-and-solve initialization (\cref{eq:sketched_ls} below).
We use the test case $\kappa = 10^{10}$ and $\norm{\vec{r}(\vec{x})} = 10^{-6}$.
Our conclusions are as follows:
\begin{itemize}
\item \textbf{Stability of sketch-and-precondition?} 
  We confirm the findings of \cite{MNTW23}; sketch-and-precondition is unstable with the zero initialization.
  The stability of sketch-and-precondition is greatly improved using the sketch-and-solve initialization, but the method is still not backward stable.
\item \textbf{Convergence rate.} For our choice of $d=20n$, sketch-and-precondition converges $2.2\times$ faster than iterative sketching. 
  We discuss variants of iterative sketching with improved convergence rates in \cref{sec:damp-mom}.
\end{itemize}

\subsection{Plan for paper} \label{sec:plan}

\Cref{sec:background} discusses background material on perturbation theory and numerical stability for least-squares problem, random subspace emeddings, and the sketch-and-solve method.
\Cref{sec:it-sk} discusses the iterative sketching method including stable implementation, practical implementation guidance, and more numerical results.
\Cref{sec:it-sk-for} presents our proof of \cref{thm:it_sk_forward} showing the forward stability of iterative sketching.
We conclude in \cref{sec:conclusions}.

\subsection{Notation} \label{sec:notation}

Throughout the paper, $\mat{A}\in\real^{m\times n}$, $m\ge n$ denotes a full-rank matrix and $\vec{b} \in \real^m$ denotes a vector.
The condition number of $\mat{A}$ is $\kappa = \kappa(\mat{A}) \coloneqq \sigma_{\rm max}(\mat{A}) / \sigma_{\rm min}(\mat{A})$.
For $\vec{y} \in \real^n$, we denote the residual $\vec{r}(\vec{y}) \coloneqq \vec{b} - \mat{A}\vec{y}$.
Double lines $\norm{\cdot}$ indicate either the Euclidean norm of a vector or the spectral norm of a matrix.
The symbol $u$ denotes the \emph{unit roundoff}, a measure of the size of rounding errors.
In double precision arithmetic, $u$ is roughly $10^{-16}$.

\subsection{Acknowledgements}
\label{sec:ack}

The suggestion to investigate the stability of iterative sketching was made to the author by Joel Tropp.
We thank Maike Meier, Riley Murray, Yuji Nakatsukasa, Christine Tobler, Joel Tropp, Marcus Webb, Robert Webber, Heiko Weichelt, and the anonymous referees for helpful discussions and feedback.

\section{Background}
\label{sec:background}

In this section, we quickly review some relevant background material: perturbation theory and numerical stability for least-squares problems (\cref{sec:pert}), random (subspace) embeddings (\cref{sec:subspace}), and the sketch-and-solve method (\cref{sec:sketch-and-solve}).

\subsection{Perturbation theory and numerical stability} \label{sec:pert}

The textbook method for least-squares problems \cref{eq:ls} is Householder \QR factorization.
Householder \QR factorization is backward stable \cite[Thm.~20.3]{Hig02}: In floating point arithmetic, it produces a numerical solution $\vec{\hat{x}}$ that is the exact solution to a slightly modified problem:
\begin{subequations} \label{eq:ls_backward} 
  \begin{equation} 
    \vec{\hat{x}} = \argmin_{\vec{y} \in \real^n} \norm{(\vec{b}+\dta{b}) - (\mat{A}+\dta{A})\vec{y}},
  \end{equation}
  where the (relative) size of the perturbations is at most
  \begin{equation} \label{eq:Ab_perturbations}
    \norm{\dta{A}} \le c u \norm{\mat{A}}, \quad \norm{\dta{b}} \le c u \norm{\vec{b}} \quad \text{provided } cu < 1.
  \end{equation}
\end{subequations}
Here, the prefactor $c$ is a low-degree polynomial in $m$ and $n$.

Our main result is that iterative sketching produces a solution $\vec{\hat{x}}$ which has a forward error $\norm{\vec{x} - \vec{\hat{x}}}$ and a residual error $\norm{\vec{r}(\vec{x})-\vec{r}(\vec{\hat{x}})}$ that is comparable to the errors produced by a backward stable method.
But what are the forward error and residual error for a backward stable method?
To answer this question, we can use Wedin's theorem \cite[Thm.~5.1]{Wed73}.
Here is a simplified version: 

\begin{fact}[Wedin's theorem] \label{thm:wedin}
  Let $\mat{A},\dta{A}\in\real^{m\times n}$ and $\vec{b},\dta{b}\in\real^m$ and set
  \begin{equation*}
    \vec{x} = \argmin_{\vec{y} \in \real^n} \norm{ \vec{b} - \mat{A}\vec{y}}, \quad \vec{\hat{x}} = \argmin_{\vec{y} \in \real^n} \norm{(\vec{b} + \dta{b}) - (\mat{A}+\dta{A})\vec{y}}.
  \end{equation*}
  Suppose that $\norm{\dta{A}} \le \varepsilon \norm{\mat{A}}$ and $\norm{\dta{b}} \le \varepsilon \norm{\vec{b}}$ for $\varepsilon \in (0,1)$. 
  Then if $\varepsilon \kappa(\mat{A}) \le 0.1$, 
  \begin{align*}
    \norm{\vec{x} - \vec{\hat{x}}} &\le 2.23\kappa(\mat{A}) \left( \norm{\vec{x}} + \frac{\kappa(\mat{A})}{\norm{\mat{A}}} \norm{\vec{r}(\vec{x})} \right) \varepsilon, \\
    \norm{\vec{r}(\vec{x}) - \vec{r}(\vec{\hat{x}})}&\le 2.23\left( \norm{\mat{A}}\norm{\vec{x}} + \kappa(\mat{A})\norm{\vec{r}(\vec{x})} \right)\varepsilon.
  \end{align*}
\end{fact}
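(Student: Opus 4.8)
The plan is to turn the standard first-order perturbation heuristic for least-squares into a rigorous bound by working directly with the normal equations and using the hypothesis $\varepsilon\,\kappa(\mat A)\le 0.1$ to control all perturbed quantities. Write $\mat{\tilde A}\coloneqq\mat A+\dta A$ and $\vec{\tilde b}\coloneqq\vec b+\dta b$, and abbreviate $\vec r\coloneqq\vec r(\vec x)$. By Weyl's inequality, $\sigma_{\min}(\mat{\tilde A})\ge\sigma_{\min}(\mat A)-\norm{\dta A}\ge(1-\varepsilon\kappa(\mat A))\,\sigma_{\min}(\mat A)\ge 0.9\,\sigma_{\min}(\mat A)>0$, so $\mat{\tilde A}$ has full column rank, both $\vec x=(\mat A^\top\mat A)^{-1}\mat A^\top\vec b$ and $\vec{\hat x}=(\mat{\tilde A}^\top\mat{\tilde A})^{-1}\mat{\tilde A}^\top\vec{\tilde b}$ are well defined, and $\norm{(\mat{\tilde A}^\top\mat{\tilde A})^{-1}}\le\tfrac{1}{0.81}\,\sigma_{\min}(\mat A)^{-2}$ and $\norm{\mat{\tilde A}^\dagger}\le\tfrac{1}{0.9}\,\sigma_{\min}(\mat A)^{-1}$.

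The key step is an exact identity. Subtracting the two sets of normal equations and using the orthogonality $\mat A^\top\vec r=\vec 0$ (which annihilates the naively $O(\varepsilon)$ term $\mat A^\top\vec r$) gives
\[
  \vec{\hat x}-\vec x=(\mat{\tilde A}^\top\mat{\tilde A})^{-1}\dta A^\top\vec r+\mat{\tilde A}^\dagger(\dta b-\dta A\,\vec x).
\]
This identity already explains the shape of the stated bound: the first term contributes the $\kappa^2$-type dependence on $\norm{\vec r}$, while the second contributes the $\kappa$-type dependence on $\norm{\vec x}$ and $\norm{\vec b}$. To finish the forward-error bound, I take norms, use $\norm{\dta A}\le\varepsilon\norm{\mat A}$, $\norm{\dta b}\le\varepsilon\norm{\vec b}$, the identity $\norm{\mat A^\dagger}=\sigma_{\min}(\mat A)^{-1}=\kappa(\mat A)/\norm{\mat A}$, and the Pythagorean inequality $\norm{\vec b}\le\norm{\mat A}\norm{\vec x}+\norm{\vec r}$ (valid since $\mat A\vec x\perp\vec r$); collecting the resulting terms and bounding the numerical constants with $\varepsilon\kappa(\mat A)\le0.1$ produces the claimed inequality. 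For the residual bound I use $\vec r(\vec x)-\vec r(\vec{\hat x})=\mat A(\vec{\hat x}-\vec x)$, left-multiply the identity by $\mat A$, and control the factor $\mat A\mat{\tilde A}^\dagger$ via $\mat A\mat{\tilde A}^\dagger=\mat{\tilde A}\mat{\tilde A}^\dagger-\dta A\,\mat{\tilde A}^\dagger$, where $\mat{\tilde A}\mat{\tilde A}^\dagger$ is an orthogonal projector (norm $1$) and $\norm{\dta A\,\mat{\tilde A}^\dagger}\le\varepsilon\kappa(\mat A)/0.9\le1/9$; the rest is again bookkeeping with the same substitutions.

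I expect the only real difficulty to be exactly this bookkeeping: the first-order picture is immediate, and the one genuinely necessary idea is the cancellation $\mat A^\top\vec r=\vec 0$, after which everything reduces to triangle inequalities plus Weyl's inequality. The mild subtlety is arranging the estimates so that the final constant stays as small as $2.23$ — in particular using the projector identity for $\mat A\mat{\tilde A}^\dagger$ and the Pythagorean bound for $\norm{\vec b}$ rather than cruder alternatives — but no new machinery is needed. If one is content with a clean $O\bigl(\kappa(\mat A)(\norm{\vec x}+\kappa(\mat A)\norm{\vec r}/\norm{\mat A})\varepsilon\bigr)$ bound with an unspecified constant, the argument collapses to a few lines.
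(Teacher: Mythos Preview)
The paper does not prove this statement: it is stated as a \emph{Fact} and attributed to Wedin \cite[Thm.~5.1]{Wed73} without proof, so there is no ``paper's own proof'' to compare against. Your argument is the standard derivation of Wedin-type bounds and is correct in structure. The exact identity
\[
  \vec{\hat x}-\vec x=(\mat{\tilde A}^\top\mat{\tilde A})^{-1}\dta A^\top\vec r+\mat{\tilde A}^\dagger(\dta b-\dta A\,\vec x)
\]
is right (the cancellation $\mat A^\top\vec r=\vec 0$ is indeed the key step), and your use of Weyl's inequality, the projector trick $\mat A\mat{\tilde A}^\dagger=\mat{\tilde A}\mat{\tilde A}^\dagger-\dta A\,\mat{\tilde A}^\dagger$, and the Pythagorean bound on $\norm{\vec b}$ are all sound.

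One small caveat: as you yourself anticipate, the bookkeeping you sketch does not quite deliver the constant $2.23$. Carrying your estimates through (with $\norm{(\mat{\tilde A}^\top\mat{\tilde A})^{-1}}\le\sigma_{\min}(\mat A)^{-2}/0.81$, $\norm{\mat{\tilde A}^\dagger}\le\sigma_{\min}(\mat A)^{-1}/0.9$, $\norm{\mat A\mat{\tilde A}^\dagger}\le 10/9$, and $\norm{\vec b}\le\norm{\mat A}\norm{\vec x}+\norm{\vec r}$) yields a coefficient of $2/0.9\approx 2.22$ on the $\norm{\vec x}$ terms but roughly $1/0.81+1/0.9\approx 2.35$ on the $\norm{\vec r}$ terms. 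To hit $2.23$ exactly you would need a slightly sharper organization---for instance, writing the first term as $(\mat A\mat{\tilde A}^\dagger)(\dta A\,\mat{\tilde A}^\dagger)^\top\vec r$ and tracking the resulting products more carefully, or using $\norm{\vec b}^2=\norm{\mat A\vec x}^2+\norm{\vec r}^2$ rather than the triangle-inequality form. This is purely cosmetic; the shape of the bound and all dependencies on $\kappa(\mat A)$, $\norm{\vec x}$, and $\norm{\vec r}$ are exactly as claimed.
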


By Wedin's theorem, assuming that $\kappa u$ is sufficiently small, the solution $\vec{\hat{x}}$ to a backward stable least-squares solver \cref{eq:ls_backward} satisfies
\begin{subequations} \label{eq:ls_forward}
  \begin{align}
    \norm{\vec{\hat{x}} - \vec{x}} &\le c' \kappa \left( \norm{\vec{x}} + \frac{\kappa}{\norm{\mat{A}}} \norm{\vec{r}(\vec{x})} \right) u, \label{eq:forward_stable} \\
    \norm{\vec{r}(\vec{\hat{x}}) - \vec{r}(\vec{x})} &\le c' \left( \norm{\mat{A}}\norm{\vec{x}} + \kappa\norm{\vec{r}(\vec{x})} \right) u, \label{eq:residual_error}
  \end{align}
\end{subequations}
where $c'$ denotes a small multiple of the constant $c$ appearing in \cref{eq:Ab_perturbations}.
A method is said to be \emph{forward stable} if the computed solution $\vec{\hat{x}}$ satisfies a bound of the form \cref{eq:ls_forward}.
For more on stability of numerical algorithms, the definitive reference is \cite{Hig02}.
The reference \cite{Bjo96} contains a comprehensive treatment of the least-squares problem and its numerical properties.

\subsection{Subspace embeddings}
\label{sec:subspace}

The core ingredient for randomized least-squares solvers are \emph{random embeddings} \cite[\S\S8--9]{MT20a}.

\begin{definition}[Subspace embedding]
  A matrix $\mat{S} \in \real^{d\times m}$ is said to be a \emph{subspace embedding} with \emph{distortion} $\varepsilon \in (0,1)$ for a subspace $\mathcal{V} \subseteq \real^m$ if
  \begin{equation} \label{eq:subspace_embedding}
    (1-\varepsilon)\norm{\vec{v}} \le \norm{\mat{S}\vec{v}} \le (1+\varepsilon) \norm{\vec{v}} \quad \text{for every }\vec{v} \in \mathcal{V}.
  \end{equation}
  A random matrix $\mat{S} \in \real^{d\times n}$ is an \emph{oblivious subspace embedding} for dimension $k$ if, for any $k$-dimensional subspace $\mathcal{V}$, the subspace embedding property \cref{eq:subspace_embedding} holds with probability at least $1-\delta$ for a specified \emph{failure probability} $\delta \in (0,1)$.
\end{definition}

There are a number of constructions for oblivious subspace embeddings.
For instance, a matrix populated with independent Gaussian entries with mean zero and variance $1/d$ is an oblivious subspace embedding with $d \approx k/\varepsilon^2$ (see \cite[Thm.~8.4]{MT20a} for a precise statement).
There are also fast embeddings that can be multiplied by a vector in $\order(m \log m)$ operations such subsampled randomized trigonometric transforms and sparse sign embeddings \cite[\S9]{MT20a}.

In this paper, we will use \emph{sparse sign embeddings} for numerical experiments, which were definitively the fastest in computational experiments reported in \cite[Fig.~2]{DM23}.
These embeddings take the form
\begin{equation} \label{eq:sparse_sign}
    \mat{S} \coloneqq \sqrt{\frac{1}{\zeta}} \begin{bmatrix} \vec{s}_1 & \cdots & \vec{s}_m \end{bmatrix},
\end{equation}
where each $\vec{s}_j \in \real^d$ is populated with exactly $\zeta$ nonzero entries with uniform $\pm 1$ values in uniformly random positions.
The theoretically sanctioned parameter choices \cite{Coh16} are $d = \order((k\log k)/\varepsilon^2)$ and $\zeta = \order((\log k)/\varepsilon)$, but more aggressive parameter choices see use in practice \cite[\S3.3]{TYUC19}.

The following singular value bounds (see, e.g., \cite[Prop.~5.4]{KT24}) will be useful later:

\begin{fact}[Singular value bounds] \label{fact:subspace_embedding}
  Let $\mat{S}$ be a subspace embedding for $\range(\mat{A})$ with distortion $\varepsilon \in (0,1)$ and let $\mat{Q}\mat{R} = \mat{S}\mat{A}$ be an economy \QR decomposition of $\mat{S}\mat{A}$.
  Then $\mat{R}$ satisfies the bounds
  \begin{equation*}
    \sigma_{\rm max}(\mat{R}) \le (1+\varepsilon) \sigma_{\rm max}(\mat{A}), \quad \sigma_{\rm min}(\mat{R}) \ge (1-\varepsilon)\sigma_{\rm min}(\mat{A}).
  \end{equation*}
  In addition, $\mat{A}\mat{R}^{-1}$ satisfies the bounds
  \begin{equation*}
    \sigma_{\rm max}(\mat{A}\mat{R}^{-1}) \le \frac{1}{1-\varepsilon}, \quad \sigma_{\rm min}(\mat{A}\mat{R}^{-1}) \ge \frac{1}{1+\varepsilon}.
  \end{equation*}
\end{fact}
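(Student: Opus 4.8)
The plan is to reduce both pairs of bounds to a single two-sided norm equivalence relating $\norm{\mat{R}\vec{y}}$ to $\norm{\mat{A}\vec{y}}$, valid for every $\vec{y}\in\real^n$, and then read off all four singular value bounds from the variational characterization of $\sigma_{\max}$ and $\sigma_{\min}$. First I would observe that for any $\vec{y}$ the vector $\mat{A}\vec{y}$ lies in $\range(\mat{A})$, so the subspace embedding property \cref{eq:subspace_embedding} gives $(1-\varepsilon)\norm{\mat{A}\vec{y}} \le \norm{\mat{S}\mat{A}\vec{y}} \le (1+\varepsilon)\norm{\mat{A}\vec{y}}$. Since $\mat{Q}$ has orthonormal columns, $\norm{\mat{S}\mat{A}\vec{y}} = \norm{\mat{Q}\mat{R}\vec{y}} = \norm{\mat{R}\vec{y}}$, hence
\[(1-\varepsilon)\norm{\mat{A}\vec{y}} \le \norm{\mat{R}\vec{y}} \le (1+\varepsilon)\norm{\mat{A}\vec{y}} \qquad \text{for all } \vec{y}\in\real^n.\]
Because $\mat{A}$ has full column rank, the left-hand inequality forces $\mat{R}\vec{y}\neq\vec{0}$ whenever $\vec{y}\neq\vec{0}$, so $\mat{R}$ is invertible and $\mat{A}\mat{R}^{-1}$ is well defined.

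For the first pair of bounds, I would maximize and minimize the displayed inequality over the unit sphere $\norm{\vec{y}}=1$, using $\sigma_{\max}(\mat{M}) = \max_{\norm{\vec{y}}=1}\norm{\mat{M}\vec{y}}$ and $\sigma_{\min}(\mat{M}) = \min_{\norm{\vec{y}}=1}\norm{\mat{M}\vec{y}}$ (the latter valid since $\mat{A}$ and $\mat{R}$ have full column rank). This immediately yields $\sigma_{\max}(\mat{R}) \le (1+\varepsilon)\sigma_{\max}(\mat{A})$ and $\sigma_{\min}(\mat{R}) \ge (1-\varepsilon)\sigma_{\min}(\mat{A})$. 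For the second pair, I would reparametrize: every $\vec{w}\in\real^n$ is uniquely $\vec{w}=\mat{R}\vec{y}$, and then $\mat{A}\mat{R}^{-1}\vec{w} = \mat{A}\vec{y}$, so the displayed inequality rearranges to
\[\frac{1}{1+\varepsilon}\norm{\vec{w}} \le \norm{\mat{A}\mat{R}^{-1}\vec{w}} \le \frac{1}{1-\varepsilon}\norm{\vec{w}}.\]
Taking the maximum and minimum over $\norm{\vec{w}}=1$ gives $\sigma_{\max}(\mat{A}\mat{R}^{-1}) \le 1/(1-\varepsilon)$ and $\sigma_{\min}(\mat{A}\mat{R}^{-1}) \ge 1/(1+\varepsilon)$.

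There is no substantial obstacle: the argument is just the variational principle for singular values applied to a two-sided norm equivalence. The only point that warrants a moment's care is confirming that $\mat{R}$ is invertible — so that $\mat{A}\mat{R}^{-1}$ makes sense and the $\sigma_{\min}$ characterization is valid — which follows from the full-rank hypothesis on $\mat{A}$ together with the lower embedding bound established in the first step.
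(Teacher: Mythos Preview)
Your argument is correct and is precisely the standard variational proof of these singular value bounds. The paper does not actually prove this fact; it states it with a citation to \cite[Prop.~5.4]{KT24}, so there is no in-paper proof to compare against. Your route---establish the two-sided norm equivalence $(1-\varepsilon)\norm{\mat{A}\vec{y}} \le \norm{\mat{R}\vec{y}} \le (1+\varepsilon)\norm{\mat{A}\vec{y}}$ via the embedding property and the orthonormality of $\mat{Q}$, then read off all four bounds from the min/max characterizations---is exactly how such results are typically proved in the cited literature.
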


\subsection{The sketch-and-solve method}
\label{sec:sketch-and-solve}

The sketch-and-solve method \cite{Sar06} can used to provide an approximate solution to the least-squares problem \cref{eq:ls}.
We use the sketch-and-solve method as the initial iterate $\vec{x}_0$ in our pseudocodes for both sketch-and-precondition (\cref{alg:spre}) and iterative sketching (\cref{alg:it-sk}) methods.
In the experiments of \cite{MNTW23} and our \cref{fig:other}, the sketch-and-solve initialization signficiantly improves the numerical stability of the sketch-and-precondition method.

Let $\mat{S}$ be a subspace embedding for $\range(\onebytwo{\mat{A}}{\vec{b}})$ with distortion $\varepsilon \in (0,1)$.
In sketch-and-solve, we compute an approximate solution $\vec{x}_0$ to the least-squares problem \cref{eq:ls} as follows:
\begin{equation} \label{eq:sketched_ls}
  \vec{x}_0 = \argmin_{\vec{y} \in \real^n} \norm{\mat{S}\vec{b} - (\mat{S}\mat{A})\vec{y}}.
\end{equation}
To compute $\vec{x}_0$ numerically, we use a (Householder) \QR factorization of the sketched matrix $\mat{S}\mat{A}$:
\begin{equation*}
  \mat{S}\mat{A} = \mat{Q}\mat{R}, \quad \vec{x}_0 = \mat{R}^{-1}(\mat{Q}^\top (\mat{S}\vec{b})).
\end{equation*}
We have the following (standard) guarantee for sketch-and-solve, proven in \cref{app:proofs}:

\begin{fact}[Sketch-and-solve] \label{fact:sketch-and-solve}
  With the present notation and assumptions, 
  \begin{gather*}
    \norm{\vec{r}(\vec{x}_0)} \le \frac{1+\varepsilon}{1-\varepsilon} \norm{\vec{r}(\vec{x})}, \quad \norm{\vec{r}(\vec{x}) - \vec{r}(\vec{x}_0)} \le \frac{2\sqrt{\varepsilon}}{1-\varepsilon} \norm{\vec{r}(\vec{x})}, \siamorarxiv{\\}{\quad} \norm{\vec{x} - \vec{x}_0} \le \frac{2\sqrt{\varepsilon}}{1-\varepsilon} \frac{\kappa}{\norm{\mat{A}}}\norm{\vec{r}(\vec{x})}.
  \end{gather*}
\end{fact}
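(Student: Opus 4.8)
The plan is to derive the three estimates in sequence, using only three ingredients: the subspace embedding inequalities $\cref{eq:subspace_embedding}$ for $\mat{S}$ on the subspace $\mathcal{V} = \range(\onebytwo{\mat{A}}{\vec{b}})$, the optimality of $\vec{x}_0$ for the sketched problem $\cref{eq:sketched_ls}$, and the orthogonality $\mat{A}^\top \vec{r}(\vec{x}) = \vec{0}$ coming from the normal equations for $\cref{eq:ls}$. The key observation making everything go through is that both $\vec{r}(\vec{x})$ and $\vec{r}(\vec{x}_0)$ lie in $\mathcal{V}$, since $\vec{r}(\vec{y}) = \vec{b} - \mat{A}\vec{y}$ is a combination of $\vec{b}$ and columns of $\mat{A}$, so $\cref{eq:subspace_embedding}$ may be applied to them.

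First I would prove the residual-norm bound. Chain the lower embedding inequality for $\vec{r}(\vec{x}_0)$, namely $\norm{\vec{r}(\vec{x}_0)} \le (1-\varepsilon)^{-1}\norm{\mat{S}\vec{r}(\vec{x}_0)}$; then the optimality inequality $\norm{\mat{S}\vec{r}(\vec{x}_0)} \le \norm{\mat{S}\vec{r}(\vec{x})}$, which holds because $\vec{x}_0$ minimizes $\vec{y}\mapsto\norm{\mat{S}\vec{b}-(\mat{S}\mat{A})\vec{y}} = \norm{\mat{S}\vec{r}(\vec{y})}$; then the upper embedding inequality $\norm{\mat{S}\vec{r}(\vec{x})} \le (1+\varepsilon)\norm{\vec{r}(\vec{x})}$. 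This gives $\norm{\vec{r}(\vec{x}_0)} \le \frac{1+\varepsilon}{1-\varepsilon}\norm{\vec{r}(\vec{x})}$. Next, writing $\vec{r}(\vec{x}_0) = \vec{r}(\vec{x}) + \mat{A}(\vec{x}-\vec{x}_0)$ and using $\vec{r}(\vec{x}) \perp \range(\mat{A})$, the Pythagorean identity gives $\norm{\vec{r}(\vec{x}_0)}^2 = \norm{\vec{r}(\vec{x})}^2 + \norm{\mat{A}(\vec{x}-\vec{x}_0)}^2$, hence $\norm{\vec{r}(\vec{x})-\vec{r}(\vec{x}_0)}^2 = \norm{\mat{A}(\vec{x}-\vec{x}_0)}^2 = \norm{\vec{r}(\vec{x}_0)}^2 - \norm{\vec{r}(\vec{x})}^2$. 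Inserting the first bound and simplifying $\big(\frac{1+\varepsilon}{1-\varepsilon}\big)^2 - 1 = \frac{4\varepsilon}{(1-\varepsilon)^2}$ yields the second inequality $\norm{\vec{r}(\vec{x})-\vec{r}(\vec{x}_0)} \le \frac{2\sqrt\varepsilon}{1-\varepsilon}\norm{\vec{r}(\vec{x})}$. Finally, since $\mat{A}$ has full column rank, $\norm{\vec{x}-\vec{x}_0} \le \sigma_{\rm min}(\mat{A})^{-1}\norm{\mat{A}(\vec{x}-\vec{x}_0)}$; substituting the previous estimate and using $\sigma_{\rm min}(\mat{A})^{-1} = \kappa/\norm{\mat{A}}$ gives the third bound.

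There is no genuine obstacle here: the argument is elementary and self-contained. The only points needing care are confirming that the relevant vectors lie in $\mathcal{V}$ so that the embedding property applies, and carrying out the Pythagorean subtraction correctly — in particular noting that $\norm{\vec{r}(\vec{x}_0)}^2 - \norm{\vec{r}(\vec{x})}^2 \ge 0$, so the square root is well defined, and that the resulting constant is exactly $\frac{2\sqrt\varepsilon}{1-\varepsilon}$. One could instead factor $\norm{\vec{r}(\vec{x}_0)}^2 - \norm{\vec{r}(\vec{x})}^2 = (\norm{\vec{r}(\vec{x}_0)}+\norm{\vec{r}(\vec{x})})(\norm{\vec{r}(\vec{x}_0)}-\norm{\vec{r}(\vec{x})})$ and bound each factor separately, but the direct computation is cleaner.
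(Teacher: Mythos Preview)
Your proposal is correct and follows essentially the same route as the paper's proof in \cref{app:proofs}: the Pythagorean identity $\norm{\vec{r}(\vec{x}_0)}^2 = \norm{\vec{r}(\vec{x})}^2 + \norm{\mat{A}(\vec{x}-\vec{x}_0)}^2$ for the second bound, then division by $\sigma_{\rm min}(\mat{A}) = \norm{\mat{A}}/\kappa$ for the third. The only difference is that the paper cites the first bound $\norm{\vec{r}(\vec{x}_0)} \le \tfrac{1+\varepsilon}{1-\varepsilon}\norm{\vec{r}(\vec{x})}$ as standard, whereas you spell out the three-line embedding/optimality/embedding chain; your version is self-contained and the paper's is not, but the underlying argument is the same.
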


\section{Iterative sketching} \label{sec:it-sk}

In this section, we introduce the iterative sketching method and develop a stable implementation.
Proof of its forward stability appears in \cref{sec:it-sk-for}.

\subsection{Description of method and formulation as iterative refinement}

In 2016, Pilanci and Wainwright introduced the iterative sketching method under the name \emph{iterative Hessian sketch} \cite{PW16a}.
Their paper frames the method as a sequence of minimization problems
\begin{equation*}
    \vec{x}_{i+1} = \argmin_{\vec{y} \in \real^n} \norm{(\mat{S}_i \mat{A})\vec{y}}^2 - \vec{y}^\top \mat{A}^\top (\vec{b} - \mat{A}\vec{x}_i) \quad \text{for } i=0,1,2,\ldots.
\end{equation*}
In this original paper, an independent embedding $\mat{S}_i$ is drawn at each iteration.
Subsequent papers \cite{OPA19,LP21} proposed a version of iterative sketching with a single embedding $\mat{S}$ (as in \cref{eq:it-sk-seq-min}) and developed variants of iterative sketching incorporating damping and momentum (see \cref{sec:damp-mom}).

For the purposes of this paper, it will be helpful to adopt an alternative interpretation of iterative sketching as \emph{iterative refinement on the normal equations}.
The normal equations 
\begin{equation} \label{eq:normal}
  (\mat{A}^\top\mat{A})\vec{x} = \mat{A}^\top\vec{b}.
\end{equation}
are a square system of equations satisfied by the solution $\vec{x}$ to the least-squares problem \cref{eq:ls}.
To develop a fast least-squares solver, we can approximate $\mat{A}^\top\mat{A}$ by using a subspace embedding:
\begin{equation*}
  (\mat{S}\mat{A})^\top (\mat{S}\mat{A}) \approx \mat{A}^\top\mat{A}.
\end{equation*}
The matrix $(\mat{S}\mat{A})^\top (\mat{S}\mat{A})$ can be used as a \emph{preconditioner} for an iterative method to solve the normal equations \cref{eq:normal}.
Specifically, iterative sketching uses one of the simplest iterative methods, (fixed-precision) iterative refinement \cite[Ch.~12]{Hig02}.
This leads to an iteration
\begin{equation} \label{eq:it-sk-it-ref}
  (\mat{S}\mat{A})^\top (\mat{S}\mat{A}) \vec{d}_i = \mat{A}^\top (\vec{b} - \mat{A}\vec{x}_i), \quad \vec{x}_{i+1} \coloneqq \vec{x}_i + \vec{d}_i.
\end{equation}
The iteration \cref{eq:it-sk-it-ref} is another description of the iterative sketching method, equivalent to the formulation based on sequential minimization \cref{eq:it-sk-seq-min} from the introduction.
We choose our initial iterate $\vec{x}_0$ to be the sketch-and-solve solution \cref{eq:sketched_ls}.

In exact arithmetic, iterative sketching converges geometrically.
\begin{theorem}[Convergence of iterative sketching] \label{thm:it-sk-convergence}
  Let $\mat{S}$ be a subspace embedding with distortion $0 < \varepsilon < 1-1/\sqrt{2}$. 
  Then the iterates $\vec{x}_0,\vec{x}_1,\ldots$ produced by the iterative sketching method (\cref{alg:it-sk}) in exact arithmetic satisfy the following bounds:
  \begin{equation*}
    \norm{\vec{x} - \vec{x}_i} < (8-2\sqrt{2})\sqrt{\varepsilon}\kappa \,g_{\rm IS}^i\, \frac{\norm{\vec{r}(\vec{x})}}{\norm{\mat{A}}}, \quad \norm{\vec{r}(\vec{x}) - \vec{r}(\vec{x}_i)} < (8-2\sqrt{2})\sqrt{\varepsilon}\,g_{\rm IS}^i \,\norm{\vec{r}(\vec{x})}.
  \end{equation*}
  The convergence rate $g_{\rm IS}$ is
  \begin{equation} \label{eq:itsk-rate}
    g_{\rm IS} = \frac{(2-\varepsilon)\varepsilon}{(1-\varepsilon)^2} \le (2+\sqrt{2})\varepsilon.
  \end{equation}
\end{theorem}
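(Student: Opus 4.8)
The plan is to track the recursion for the \emph{error} $\vec{e}_i \coloneqq \vec{x} - \vec{x}_i$ together with its image under $\mat{A}$. Set $\mat{M} \coloneqq (\mat{S}\mat{A})^\top(\mat{S}\mat{A}) = \mat{R}^\top\mat{R}$ (using $\mat{Q}^\top\mat{Q} = \Id$; note $\mat{R}$ is invertible since $\mat{S}$ has distortion below $1$) and $\mat{H} \coloneqq \mat{A}^\top\mat{A}$. Because $\mat{A}^\top\vec{b} = \mat{H}\vec{x}$, the step equation $\mat{M}\vec{d}_i = \mat{A}^\top(\vec{b}-\mat{A}\vec{x}_i)$ rearranges to $\mat{M}\vec{d}_i = \mat{H}\vec{e}_i$, so $\vec{x}_{i+1} = \vec{x}_i + \vec{d}_i$ gives
\[
  \vec{e}_{i+1} = (\Id - \mat{M}^{-1}\mat{H})\,\vec{e}_i .
\]
Left-multiplying by $\mat{A}$ and using the identity $\mat{A}(\Id - \mat{M}^{-1}\mat{H}) = (\Id - \mat{A}\mat{M}^{-1}\mat{A}^\top)\mat{A}$, the residual gaps $\vec{\rho}_i \coloneqq \vec{r}(\vec{x}_i) - \vec{r}(\vec{x}) = \mat{A}\vec{e}_i$ obey $\vec{\rho}_{i+1} = (\Id - \mat{A}\mat{M}^{-1}\mat{A}^\top)\vec{\rho}_i$, and every $\vec{\rho}_i$ lies in $\range(\mat{A})$.

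Next I would bound the contraction factor. With $\mat{W} \coloneqq \mat{A}\mat{R}^{-1}$ one has $\mat{A}\mat{M}^{-1}\mat{A}^\top = \mat{W}\mat{W}^\top$, a symmetric positive semidefinite matrix with $\range(\mat{W}\mat{W}^\top) = \range(\mat{A})$ whose nonzero eigenvalues agree with those of $\mat{W}^\top\mat{W} = (\mat{A}\mat{R}^{-1})^\top(\mat{A}\mat{R}^{-1})$. \Cref{fact:subspace_embedding} places those eigenvalues in $[(1+\varepsilon)^{-2},\,(1-\varepsilon)^{-2}]$, so on its invariant subspace $\range(\mat{A})$ the operator $\Id - \mat{W}\mat{W}^\top$ has eigenvalues in $[1-(1-\varepsilon)^{-2},\,1-(1+\varepsilon)^{-2}]$ and hence norm $\max\bigl\{\tfrac{(2-\varepsilon)\varepsilon}{(1-\varepsilon)^2},\,\tfrac{(2+\varepsilon)\varepsilon}{(1+\varepsilon)^2}\bigr\}$. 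A one-line calculation ($(2-\varepsilon)(1+\varepsilon)^2 - (2+\varepsilon)(1-\varepsilon)^2 = 2\varepsilon(3-\varepsilon^2) > 0$ on $(0,1)$) shows the first term wins, so $\norm{\vec{\rho}_{i+1}} \le g_{\rm IS}\norm{\vec{\rho}_i}$ with $g_{\rm IS} = \tfrac{(2-\varepsilon)\varepsilon}{(1-\varepsilon)^2}$. Since $\varepsilon \mapsto \tfrac{2-\varepsilon}{(1-\varepsilon)^2}$ has positive derivative $\tfrac{3-\varepsilon}{(1-\varepsilon)^3}$ on $(0,1)$, evaluating at $\varepsilon = 1 - 1/\sqrt{2}$ yields both $g_{\rm IS} \le (2+\sqrt{2})\varepsilon$ and the fact that $g_{\rm IS}<1$ precisely under the hypothesis $\varepsilon < 1 - 1/\sqrt{2}$ (the boundary value being $g_{\rm IS}=1$).

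Finally I would assemble the two bounds. Iterating the contraction, $\norm{\vec{r}(\vec{x}) - \vec{r}(\vec{x}_i)} = \norm{\vec{\rho}_i} \le g_{\rm IS}^i\norm{\vec{\rho}_0} = g_{\rm IS}^i\norm{\vec{r}(\vec{x}) - \vec{r}(\vec{x}_0)}$, and \cref{fact:sketch-and-solve} gives $\norm{\vec{r}(\vec{x}) - \vec{r}(\vec{x}_0)} \le \tfrac{2\sqrt{\varepsilon}}{1-\varepsilon}\norm{\vec{r}(\vec{x})}$; for the forward error, $\norm{\vec{e}_i} \le \sigma_{\rm min}(\mat{A})^{-1}\norm{\mat{A}\vec{e}_i} = \tfrac{\kappa}{\norm{\mat{A}}}\norm{\vec{\rho}_i}$, to which the same estimate applies. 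In both cases the prefactor $\tfrac{2}{1-\varepsilon}$ is at most $2\sqrt{2} < 8 - 2\sqrt{2}$ since $\varepsilon < 1 - 1/\sqrt{2}$, giving the two displayed inequalities (strictly). I do not anticipate a real obstacle: this is a textbook iterative-refinement contraction argument. The only points needing care are routing the recursion through the residual so that the relevant operator acts on $\range(\mat{A})$ — where \cref{fact:subspace_embedding} applies cleanly and the operator is symmetric, unlike $\Id - \mat{M}^{-1}\mat{H}$ in the plain Euclidean norm — and the handful of scalar inequalities that pin down the exact rate $g_{\rm IS}$, its bound $(2+\sqrt{2})\varepsilon$, and the threshold $1 - 1/\sqrt{2}$.
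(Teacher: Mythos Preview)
Your argument is correct and close in spirit to the paper's, but it routes the contraction through a different (dual) quantity. The paper tracks $\mat{R}(\vec{x}-\vec{x}_i)$, i.e.\ the error in the $\mat{R}$-norm: writing the recursion as $\vec{x}-\vec{x}_{i+1} = \mat{R}^{-1}\mat{G}\mat{R}(\vec{x}-\vec{x}_i)$ with $\mat{G}=\Id-\mat{W}^\top\mat{W}$ (your $\mat{W}=\mat{A}\mat{R}^{-1}$), it gets $\norm{\mat{S}\mat{A}(\vec{x}-\vec{x}_i)}\le\norm{\mat{G}}^i\norm{\mat{S}\mat{A}(\vec{x}-\vec{x}_0)}$ and then passes back to $\norm{\mat{A}(\vec{x}-\vec{x}_i)}$ via the subspace-embedding inequalities, picking up extra $(1+\varepsilon)$ and $1/(1-\varepsilon)$ factors; this yields the prefactor $2(1+\varepsilon)/(1-\varepsilon)^2$, which is then bounded by $8-2\sqrt{2}$ at $\varepsilon=1-1/\sqrt{2}$. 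You instead track $\mat{A}(\vec{x}-\vec{x}_i)$ directly and use the operator $\Id-\mat{W}\mat{W}^\top$ restricted to $\range(\mat{A})$; since $\mat{W}^\top\mat{W}$ and $\mat{W}\mat{W}^\top$ share nonzero spectrum, the contraction rate $g_{\rm IS}$ is identical, and you avoid the two embedding passes, landing on the sharper prefactor $2/(1-\varepsilon)\le 2\sqrt{2}$. Your route is slightly cleaner for this exact-arithmetic statement; the paper's $\mat{R}$-norm formulation, on the other hand, is the one that carries over verbatim to the floating-point analysis in \cref{sec:it-sk-for}, where $\mat{\hat{R}}$ is the computed object and the iteration is naturally expressed through it.
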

We present the proof at the end of this section, as it will parallel out forthcoming analysis of iterative sketching in floating point arithmetic.
This result demonstrates that a distortion of $\varepsilon \approx 0.29$ is necessary to achieve convergence.
Assuming $\varepsilon \approx \sqrt{n/d}$, this implies \textbf{an embedding dimension of $d \approx 12 n$ is needed to obtain a convergent scheme}.
The large required embedding dimension $d\gtrapprox 12n$ is a genuine property of the iterative sketching method, not an artifact of the analysis.

\subsection{Numerically stable implementation}

In this section, we develop a stable implementation of iterative sketching, which was presented in the introduction as \cref{alg:it-sk}.
This fills a gap in the literature, as previous pseudocodes for iterative sketching in the literature \cite{PW16a,OPA19,LP21} do not offer precise guidance for how to stably implement the recurrence \cref{eq:it-sk-it-ref}.

To stably solve the recurrence \cref{eq:it-sk-it-ref}, we compute an economy \QR factorization
\begin{equation} \label{eq:it-sk-qr}
  \mat{S}\mat{A} = \mat{Q}\mat{R}.
\end{equation}
As a consequence, we have \emph{implicitly} computed a Cholesky decomposition:
\begin{equation*}
  (\mat{S}\mat{A})^\top (\mat{S}\mat{A}) = \mat{R}^\top \mat{R}, 
\end{equation*}
allowing us to solve \cref{eq:it-sk-it-ref} using two triangular solves, i.e., 
\begin{equation*}
  \mat{R}^\top \mat{R} \vec{d}_i = \mat{A}^\top (\vec{b} - \mat{A}\vec{x}_i) \implies \vec{d}_i = \mat{R}^{-1} \left( \mat{R}^{-\top} \left(\mat{A}^\top \left(\vec{b} - \mat{A}\vec{x}_i\right)\right)\right).	
\end{equation*}
A stable implementation of iterative sketching using this approach is presented as \cref{alg:it-sk}.

To illustrate the importance of our stable implementation (\cref{alg:it-sk}), we show the numerical performance of three ``bad'' implementations in \cref{fig:bad}:
\begin{itemize}
\item \textbf{Bad matrix.} In this implementation, we form the matrix $(\mat{S}\mat{A})^\top (\mat{S}\mat{A})$ explicitly and factorize it using Cholesky decomposition or, should Cholesky fail (which it typically does, if $\kappa \gtrapprox 1/\sqrt{u}$), \LU factorization with partial pivoting.
  We then use this factorization to solve \cref{eq:it-sk-it-ref}.
(\emph{Stable version:} use \QR factorization \cref{eq:it-sk-qr}.)
\item \textbf{Bad residual.} In this implementation, we evaluate the right-hand side of \cref{eq:it-sk-it-ref} using the algebraically equivalent---but not numerically equivalent---expression $\mat{A}^\top \vec{b} - \mat{A}^\top (\mat{A}\vec{x})$.
(\emph{Stable version:} use $\mat{A}^\top(\vec{b}-\mat{A}\vec{x})$.)
\item \textbf{Bad initialization.} In this implementation, we set $\vec{x}_0 \coloneqq \vec{0}$. 
(\emph{Better version:} use the sketch-and-solve initialization \cref{eq:sketched_ls}.)
\end{itemize}
We evaluate these bad implementations using the same setup as \cref{fig:numerical} with $\kappa = 10^{10}$ and $\norm{\vec{r}(\vec{x})} = 10^{-6}$.
The first \textbf{bad matrix} fails to converge at all, with the numerical solution diverging at an exponential rate.
The \textbf{bad residual} is not stable, having significantly higher error in all metrics than the stable implementation in \cref{alg:it-sk} and the reference accuracy of Householder \QR.
The \textbf{bad initialization} method is stable, but takes over twice as long to converge as \cref{alg:it-sk}.

\begin{figure}[t]
  \centering
  
  \begin{subfigure}{0.45\textwidth}
    \includegraphics[width=\textwidth]{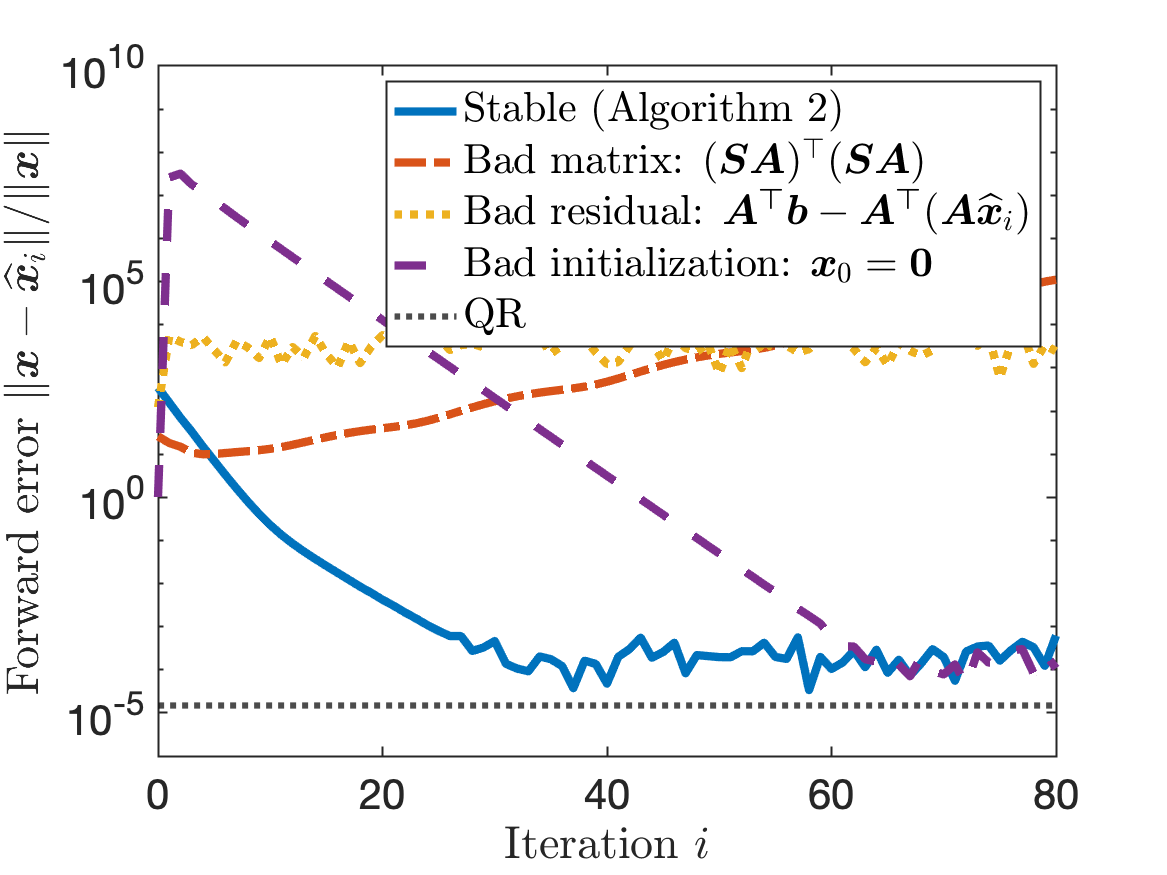}
  \end{subfigure}
  \begin{subfigure}{0.45\linewidth}
    \includegraphics[width=\textwidth]{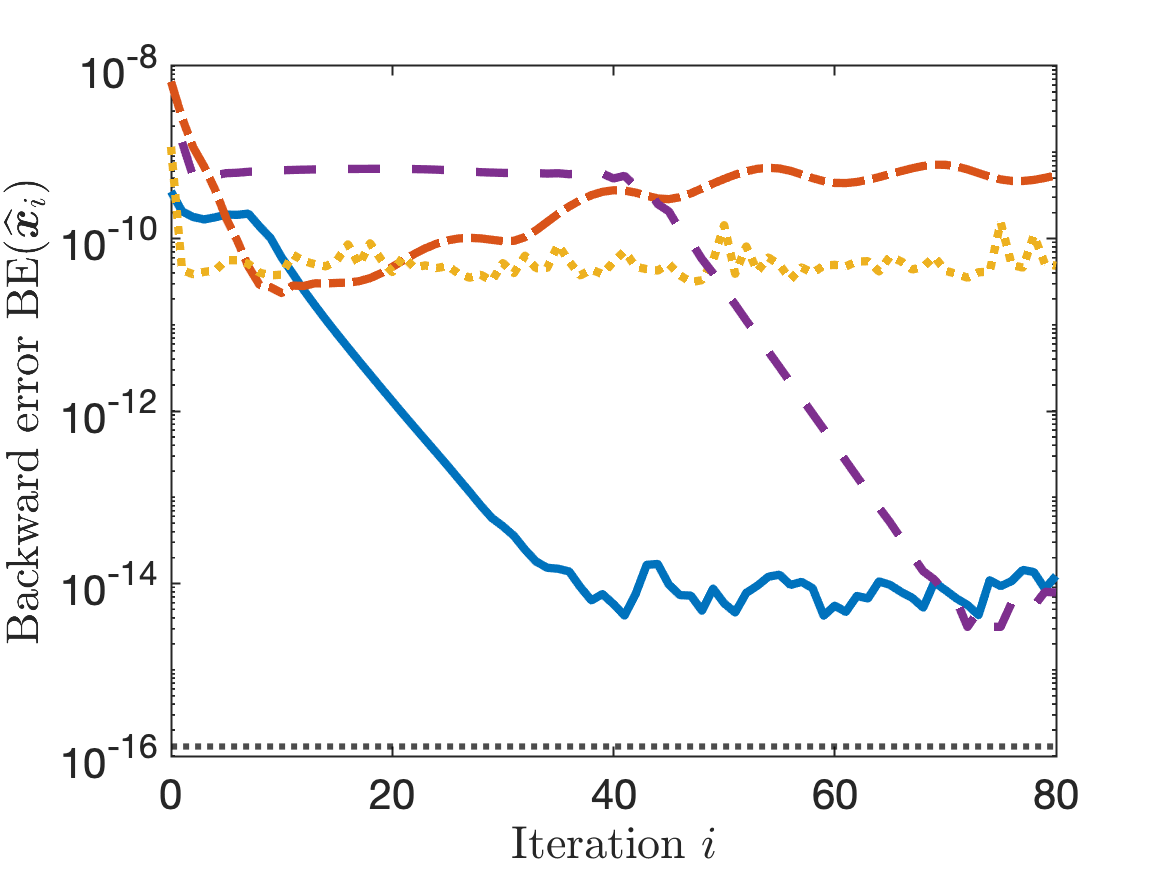}
  \end{subfigure}
  
  \caption{Comparison of the stable implementation of iterative sketching (\cref{alg:it-sk}, blue solid) to three ``bad'' implementations, referred to as \textbf{bad matrix} (red dash-dotted), \textbf{bad residual} (yellow dotted), and \textbf{bad initial} (purple dashed).
    The accuracy of Householder \QR factorization is provided as a baseline (black dotted).} \label{fig:bad}
  
\end{figure}

\subsection{Implementation guidance} \label{sec:implementation}

In this section, we provide our practical implementation guidance for iterative sketching.
As always, implementation details should be adapted to the format of the input matrix $\mat{A}$ (dense, sparse,\ldots) and the available computing resources.
Our recommendations for iterative sketching are targeted at dense $\mat{A}$ for computation on a laptop-scale computer, and are provided as a baseline from which to do machine-specific tuning.
See \cite{CDD+23} for a recent approach to autotuning randomized least-squares problems.

Based on the runtime evaluation in \cite{DM23}, we recommend choosing $\mat{S}$ to be a sparse sign embedding. 
Following \cite[\S3.3]{TYUC19}, we use sparsity level $\zeta = 8$.

If one is using iterative sketching to solve to machine accuracy, we recommend setting
\begin{equation} \label{eq:dimension}
  d \coloneqq \max\left(\left\lceil (6+4\sqrt{2})n \cdot \exp\left(\mathrm{W}\left((6-4\sqrt{2})\frac{m}{n^2} \log \left(\frac{1}{u}\right) \right)\right)\right\rceil, 20n\right).
\end{equation}
Here, $\rm W$ is the Lambert W function and $u$ is the unit roundoff.
To solve to less than machine accuracy, $u$ can be replaced by the desired accuracy level in \cref{eq:dimension}.
We obtained this expression by balancing the $2dn^2$ operations required to \QR factorize $\mat{S}\mat{A}$ and the roughly $4mn\log(1/u)/\log(1/g_{\rm IS})$ operations needed to iterate until convergence (in the worst case).
We choose to enforce $d \ge 20n$ to ensure rapid convergence even when $m$ and $n$ are close to each other.

To achieve nearly the maximum numerically achievable accuracy, we can use Wedin's theorem (\cref{thm:wedin}) to inform a stopping criterion for the algorithm.
As \cref{fact:subspace_embedding} shows, $\norm{\mat{R}}$ and $\kappa(\mat{R}) \coloneqq \sigma_{\rm max}(\mat{R})/\sigma_{\rm min}(\mat{R})$ are within a small multiple of $\norm{\mat{A}}$ and $\kappa$, motivating the following procedure:
\begin{itemize}
\item Produce an estimate $\mathrm{normest}$ for $\norm{\mat{R}} \approx \norm{\mat{A}}$ by applying $\lceil \log n \rceil$ steps of the randomized power method \cite[\S6.2]{MT20a} to $\mat{R}$.
\item Produce an estimate $\mathrm{condest}$ for $\kappa(\mat{R}) \approx \kappa$ using condition number estimation \cite[Ch.~15]{Hig02}.
\item Run the iteration until the change in residual is small compared to the final residual accuracy predicted by Wedin's theorem\siamorarxiv{. Specifically, we recommend the stopping criteria}{:}
  \begin{equation*}
    \iarxiv{\text{STOP} \quad \text{when} \quad} \norm{\vec{r}(\vec{x}_{i+1}) - \vec{r}(\vec{x}_i)} \le u \left( \gamma \cdot \mathrm{normest} \cdot \norm{\vec{x}_{i+1}} + \rho \cdot \mathrm{condest} \cdot \norm{\vec{r}(\vec{x}_{i+1})} \right).
  \end{equation*}
  We use $\gamma = 1$ and $\rho = 0.04$.
\end{itemize}
We find this procedure reliably leads to solutions whose forward and residual errors are a small multiple of the solution produced by Householder \QR.
If more accuracy is desired, one can run the iteration for a handful of additional iterations after the tolerance is met.
One can use a looser tolerance if far less accuracy than Householder \QR is needed.

\subsection{Numerical example: Dense kernel regression problems}
\label{sec:application}

We now compare iterative sketching to MATLAB's \QR-based solver \texttt{mldivide} on a dense kernel regression least-squares problem.
Our goal is to confirm the $\order((mn+n^3) \log n)$ complexity and forward stability of iterative sketching; larger-scale experiments could be a subject for future work.
Suppose we want to learn an unknown function $f : \real^k \to \real$ from input--output pairs $(\vec{z}_1,b_1),\ldots,(\vec{z}_m,b_m) \in \real^k \times \real$.
We can solve this problem using \emph{restricted kernel ridge regression} (KRR) \cite{SB00,RCR17,DEF+23}.
Let $K : \real^k \times \real^k \to \real$ be a positive definite kernel function, $\mu > 0$ a regularization parameter, and $\set{S} = \{s_1,\ldots,s_n\} \subseteq \{1,\ldots,m\}$ a subset of $n$ (uniformly random) data points.
In restricted KRR, we assume an ansatz
\begin{equation*}
  f(\cdot) = \sum_{j=1}^n K(\cdot, \vec{z}_{s_j}) x_j.
\end{equation*}
The coefficients $\vec{x} = (x_1,\ldots,x_n) \in \real^n$ are chosen to minimize the regularized least-squares loss
\begin{equation*}
  \operatorname{Loss}(\vec{x}) \coloneqq \norm{\vec{b} - \mat{A}\vec{x}}^2 + \mu \, \vec{x}^\top  \mat{H}\vec{x} \quad \text{where } a_{ij} = K(\vec{z}_i, \vec{z}_{s_j}) \text{ and } h_{ij} = K(\vec{z}_{s_i},\vec{z}_{s_j}).
\end{equation*}
For simplicity, we set the regularization to zero, $\mu \coloneqq 0$, which reduces the restricted KRR problem to an ordinary linear least-squares problem.

To perform our evaluation, we use a sample of $m = 10^6$ data points from the SUSY dataset \cite{BSW14}.
We standardize the data and use a square-exponential kernel
\begin{equation*}
  K(\vec{z},\vec{z}') = \exp \left( - \frac{\norm{\vec{z} - \vec{z}'}^2}{2\sigma^2} \right) \quad \text{with } \sigma = 4.
\end{equation*}
We randomly subsample $1000$ points $s_1,\ldots,s_{1000} \in \{1,\ldots,10^6\}$ and test using values of $n = |\set{S}|$ between $10^1$ and $10^3$.
We implement iterative sketching following the guidance in \cref{sec:implementation}.

\begin{figure}[t]
  \centering
  
  \includegraphics[width=0.48\textwidth]{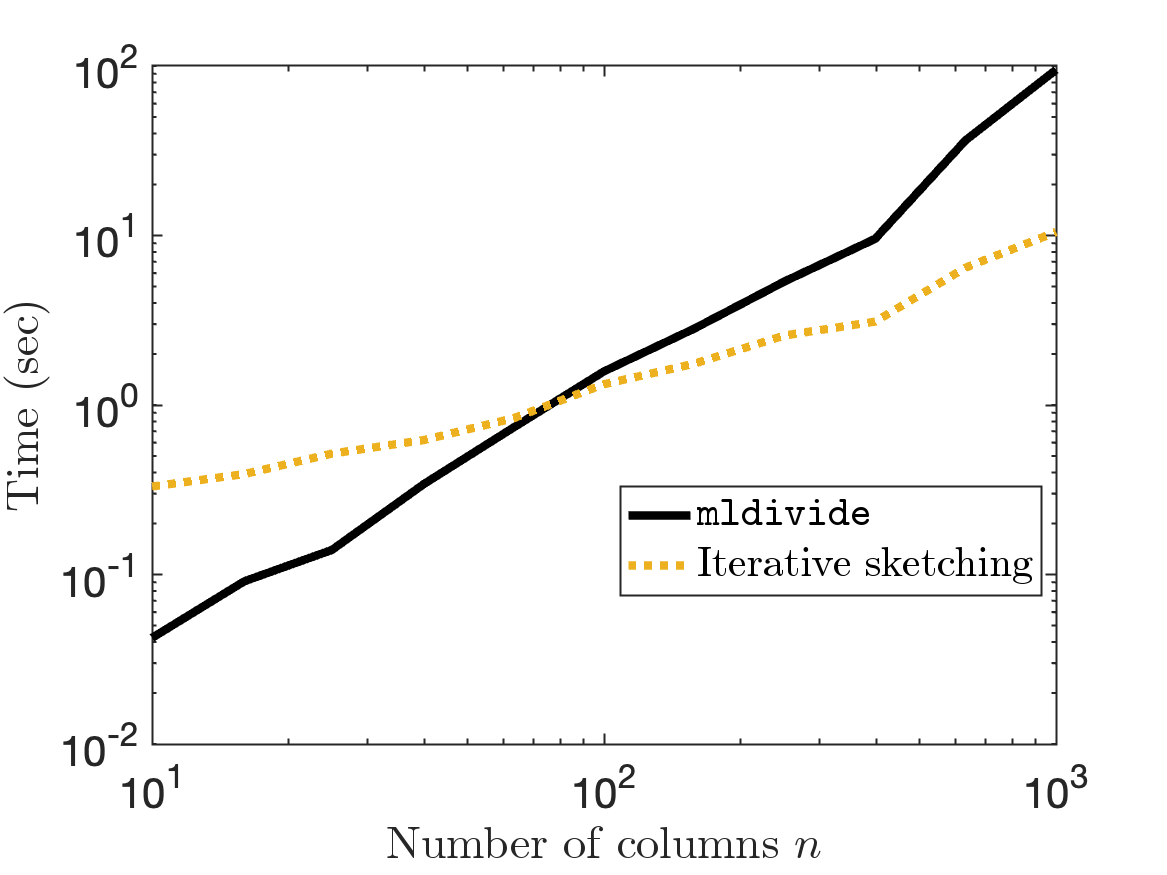}
  \hfill
  \includegraphics[width=0.48\textwidth]{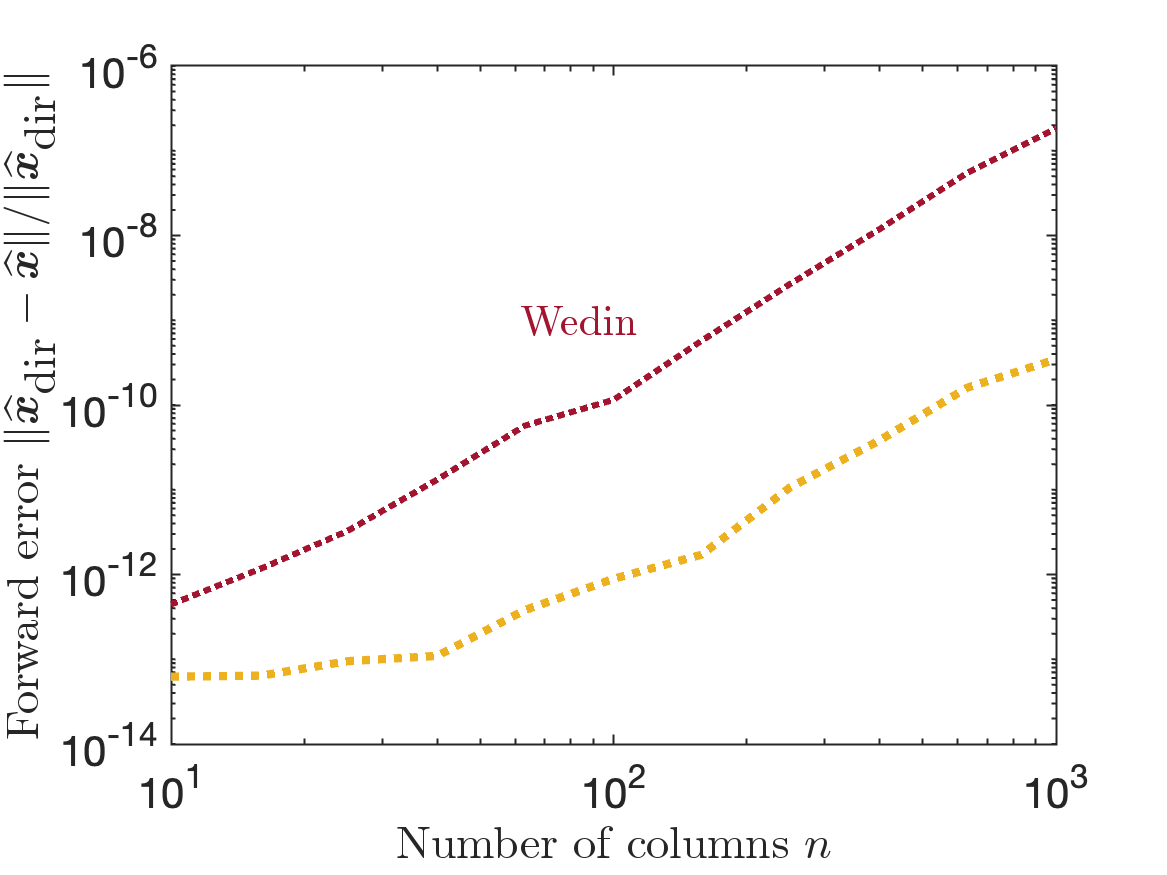} 
    
  \caption{\emph{Left:} Runtime of iterative sketching and MATLAB's \texttt{mldivide} for the kernel regression problem for numbers of columns $n \in [10^1,10^3]$. \emph{Right:} Relative difference of solution $\vec{\hat{x}}$ by iterative sketching and $\vec{\hat{x}}_{\rm dir}$ by MATLAB's \texttt{mldivide}. The upper bound provided by Wedin's theorem (\cref{thm:wedin}) with $\varepsilon = u$ is shown as a dotted maroon line.} \label{fig:susy}
\end{figure}

\Cref{fig:susy} shows the results.
The left panel shows the runtime.
As $n$ increases, iterative sketching becomes faster than MATLAB's \texttt{mldivide}, achieving \textbf{a 10$\times$ speedup} at $n = 1000$.
The right panel shows the norm of the difference between the solutions computed by \texttt{mldivide} and iterative sketching; the upper bound on the forward error by Wedin's theorem (\cref{thm:wedin}) is shown for reference.
With our chosen parameter settings and stopping criteria, iterative sketching is numerically stable on this problem, achieving a level of accuracy consistent with Wedin's theorem.

\subsection{Numerical example: Sparse problems} \label{sec:sparse}

Iterative sketching can also be applied to sparse least-square problems, with large speedups over direct solversendend for problems lacking an efficient elimination ordering.
We illustrate with a synthetic example.
Set $m\coloneqq 3\times 10^6$ and consider a range of values $n$ between $10^1$ and $10^3$. 
For each value of $n$, we generate a matrix $\mat{A}\in\real^{m\times n}$ with three nonzeros per row placed in uniformly random positions with uniformly random values $\pm 1$.
We generate $\vec{b} \in \real^m$ with independent standard Gaussian entries, and set the embedding dimension to $d=30n$.
The left panel of \cref{fig:sparse} shows the runtime of iterative sketching and \texttt{mldivide}.
Iterative sketching is faster than \texttt{mldivide} for $n\ge 50$, obtaining a \textbf{a 36$\times$ speedup} at $n = 1000$.

Another advantage of iterative sketching over \texttt{mldivide} for sparse problems is memory usage.
To observe the memory usage, we use the unix command \texttt{ps -p \$PID -o rss=}.
Results are shown in the right panel of \cref{fig:sparse}.
For the direct method, the memory usage climbs steadily with $n$, whereas for iterative sketching memory usage remains steady, in fact dropping slightly (possibly an artifact of our profiling strategy).
The memory usage of these two methods has a big impact on the size of problems that can be solved.
On our machine, MATLAB crashes when we try and solve a $10^8\times 10^4$ sparse least-squares problem, where iterative sketching succeeds.
The ability to solve very large sparse least-squares problems to high accuracy may be one of the most compelling use cases for iterative sketching and other randomized iterative methods.

\begin{figure}[t]
  \centering
  
  \includegraphics[width=0.48\textwidth]{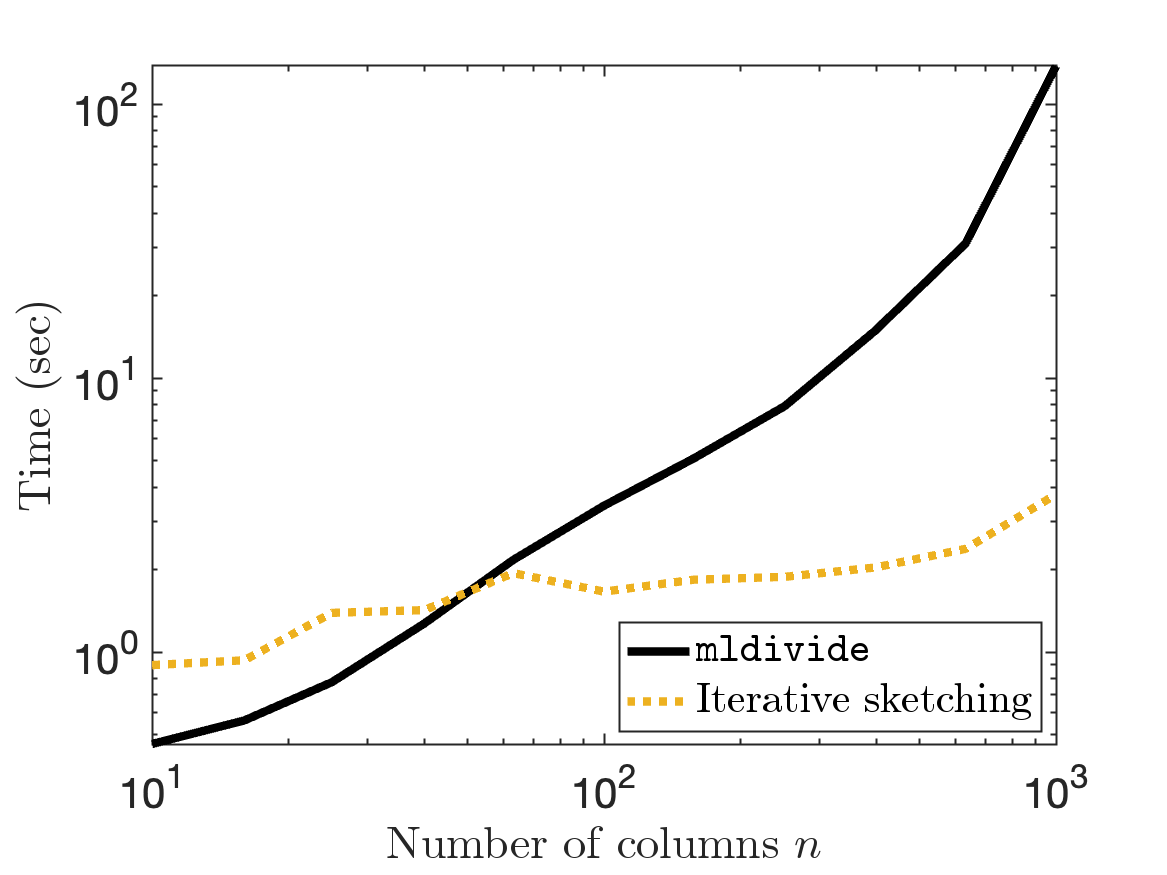}
  \includegraphics[width=0.48\textwidth]{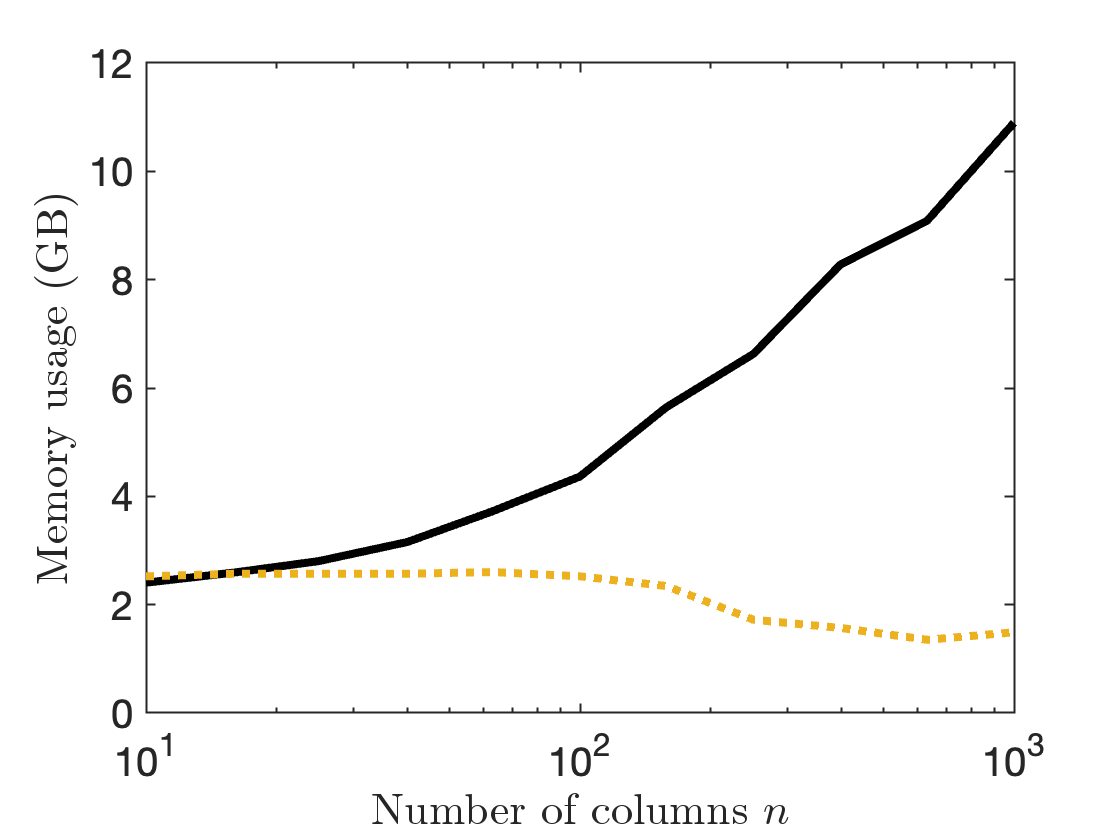}
    
  \caption{Runtime (\emph{left}) and memory usage (\emph{right}) of iterative sketching and MATLAB's \texttt{mldivide} for the sparse problem for numbers of columns $n \in [10^1,10^3]$.} \label{fig:sparse}
\end{figure}

\subsection{\texorpdfstring{Proof of \cref{thm:it-sk-convergence}}{Proof of Theorem 6}} \label{sec:it-sk-analysis}

Let $\mat{R}$ be the R-factor of $\mat{S}\mat{A}$, as in \cref{eq:it-sk-qr}.
Then, with some rearranging, the iterative sketching recurrence \cref{eq:it-sk-it-ref} takes the form
\begin{equation*}
  \vec{x}_{i+1} = (\Id - \mat{R}^{-1}\mat{R}^{-\top} \mat{A}^\top\mat{A}) \vec{x}_i + \mat{R}^{-1}\mat{R}^{-\top}\mat{A}^\top\vec{b}.
\end{equation*}
The solution $\vec{x}$ is a fixed point of this iteration:
\begin{equation*}
  \vec{x} = (\Id - \mat{R}^{-1}\mat{R}^{-\top} \mat{A}^\top\mat{A}) \vec{x} + \mat{R}^{-1}\mat{R}^{-\top}\mat{A}^\top\vec{b}.
\end{equation*}
Subtracting the two previous displays leads to a recurrence for the error $\vec{x} - \vec{x}_i$:
\begin{equation*}
  \vec{x} - \vec{x}_{i+1} = \mat{R}^{-1}\mat{G} \mat{R}(\vec{x} - \vec{x}_i) \quad \text{where} \quad \mat{G} \coloneqq \Id - \mat{R}^{-\top} \mat{A}^\top\mat{A}\mat{R}^{-1}
\end{equation*}
and, consequently,
\begin{equation*}
  \vec{x} - \vec{x}_i = \mat{R}^{-1} \mat{G}^i \mat{R}(\vec{x} - \vec{x}_0).
\end{equation*}

Multiplying by $\mat{Q}\mat{R} = \mat{S}\mat{A}$ and taking norms, we obtain
\begin{equation*} \label{eq:it-sk-analysis-1}
  \norm{\mat{S}\mat{A}(\vec{x} - \vec{x}_i)} \le \norm{\mat{Q}}\norm{\mat{G}}^i \norm{\mat{R}(\vec{x} - \vec{x}_0)} = \norm{\mat{G}}^i \norm{\mat{S}\mat{A}(\vec{x} - \vec{x}_0)}.
\end{equation*}
Using the singular value bounds in \cref{fact:subspace_embedding}, we see that
\begin{equation*}
  \norm{\mat{G}} \le \max \left\{ \left(\frac{1}{1-\varepsilon}\right)^2-1,1-\left(\frac{1}{1+\varepsilon}\right)^2 \right\} = \frac{(2-\varepsilon)\varepsilon}{(1-\varepsilon)^2} = g_{\rm IS}.
\end{equation*}
Recall that we have chosen our initial iterate $\vec{x}_0$ to be the sketch-and-solve solution.
Therefore, we can invoke the subspace embedding property and the sketch-and-solve bound, \cref{fact:sketch-and-solve}, to obtain
\begin{equation*} \label{eq:sketch-and-solve-initial-bound}
  \norm{\mat{S}\mat{A}(\vec{x} - \vec{x}_0)} \le (1+\varepsilon) \norm{\mat{A}(\vec{x} - \vec{x}_0)} = (1+\varepsilon) \norm{\vec{r}(\vec{x}) - \vec{r}(\vec{x}_0)} \le 2\cdot\frac{1+\varepsilon}{1-\varepsilon}\cdot\sqrt{\varepsilon} \norm{\vec{r}(\vec{x})}.
\end{equation*}
Combining the three previous displays with one more application of the subspace embedding property, we obtain
\begin{equation} \label{eq:it-sk-res-bound}
    \begin{split}
  \norm{\vec{r}(\vec{x}) - \vec{r}(\vec{x}_i)} &\le \frac{1}{1-\varepsilon} \norm{\mat{S}\mat{A}(\vec{x} - \vec{x}_i)} \le \frac{1}{1-\varepsilon} \norm{\mat{G}}^i\norm{\mat{S}\mat{A}(\vec{x} - \vec{x}_0)}\\
  &\le 2\cdot\frac{1+\varepsilon}{(1-\varepsilon)^2}\cdot\sqrt{\varepsilon} g_{\rm IS}^i\,\norm{\vec{r}(\vec{x})}< (8-2\sqrt{2})\sqrt{\varepsilon} \, g_{\rm IS}^i\, \norm{\vec{r}(\vec{x})}.
  \end{split}
\end{equation}
In the final line, we use the hypothesis $\varepsilon< 1-1/\sqrt{2}$.
To prove the bound on $\norm{\vec{x} - \vec{x}_i}$, realize that
\begin{equation*}
  \norm{\vec{r}(\vec{x}) - \vec{r}(\vec{x}_i)} = \norm{\mat{A}(\vec{x} - \vec{x}_i)} \ge \sigma_{\rm min}(\mat{A}) \norm{\vec{x} - \vec{x}_i}.
\end{equation*}
Rearranging and applying \cref{eq:it-sk-res-bound} yields the stated bound on $\norm{\vec{x} - \vec{x}_i}$. \hfill $\proofbox$

\section{Iterative sketching is forward stable}
\label{sec:it-sk-for}

In this section, we prove that iterative sketching is forward stable:

\begin{theorem}[Iterative sketching is forward stable] \label{thm:it_sk_forward}
  Let $\mat{S}\in\real^{d\times m}$ be a subspace embedding for $\range(\onebytwo{\mat{A}}{\vec{b}})$ with distortion $\varepsilon\in(0,0.29]$.
  There exists a constant $c_1>0$ depending polynomially on $m$, $n$, and $d$ such that if $c_1 \kappa u < 1$ and multiplication by $\mat{S}$ is forward stable (in the precise sense outlined below \cref{eq:S_forward_stable}), then the numerically computed iterates $\vec{\hat{x}}_i$ by \cref{alg:it-sk} satisfy bounds
  \begin{equation} \label{eq:guarantees}
    \begin{split}
      \norm{\vec{x} - \vec{\hat{x}}_i} &\le 20\sqrt{\varepsilon} \kappa \left( g_{\rm IS} + c_1 \kappa u \right)^i \frac{\norm{\vec{r}(\vec{x})}}{\norm{\mat{A}}} + c_1 \kappa u \left[ \norm{\vec{x}} + \frac{\kappa}{\norm{\mat{A}}} \norm{\vec{r}(\vec{x})} \right], \\
      \norm{\vec{r}(\vec{x}) - \vec{r}(\vec{\hat{x}}_i)} &\le 20\sqrt{\varepsilon} \left( g_{\rm IS} + c_1 \kappa u \right)^i \norm{\vec{r}(\vec{x})} + c_1 u \left[ \norm{\mat{A}} \norm{\vec{x}} + \kappa \norm{\vec{r}(\vec{x})} \right].
    \end{split}
  \end{equation}
  Here, $g_{\rm IS}$ is the convergence rate of iterative sketching \cref{eq:itsk-rate}.
  In particular, if $g_{\rm IS} + c_1\kappa u \le 0.9$ and $\mat{S}$ is a sparse sign embedding \cref{eq:sparse_sign} with $d = \order(n \log n)$ and $\zeta = \order(\log n)$, then iterative sketching produces a solution $\vec{\hat{x}}$ that satisfies the forward stable error guarantee \cref{eq:ls_forward} in $q = \order(\log(1/u))$ iterations and $\order(mn \log (n/u)+ n^3 \log n)$ operations.
\end{theorem}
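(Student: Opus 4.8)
The plan is to treat \cref{alg:it-sk} as fixed-precision iterative refinement on the normal equations with the preconditioner $\mat{R}^\top\mat{R} = (\mat{S}\mat{A})^\top(\mat{S}\mat{A})$, and to mirror Bj\"orck's rounding-error analysis \cite{Bjo87} of the corrected seminormal equations. First I would set up a floating-point model for one pass of the loop. Using the standard backward-error bounds for matrix--vector products and triangular solves \cite{Hig02}, the computed quantities satisfy $\hat{\vec{r}}_i = \vec{b} - \mat{A}\hat{\vec{x}}_i + \Delta\vec{r}_i$, $\vec{c}_i = \mat{A}^\top\hat{\vec{r}}_i + \Delta\vec{c}_i$, and $(\mat{R}^\top\mat{R} + \mat{E}_i)\hat{\vec{d}}_i = \vec{c}_i$ with $\hat{\vec{x}}_{i+1} = \hat{\vec{x}}_i + \hat{\vec{d}}_i + \Delta\vec{x}_i$, where each perturbation is $\order(u)$ times the norms of the vectors involved and --- crucially --- the triangular-solve backward error has the form $\mat{E}_i = \delta\mat{R}_1\mat{R} + \mat{R}^\top\delta\mat{R}_2 + \order(u^2)$ with $\norm{\delta\mat{R}_k} = \order(u\norm{\mat{R}})$. (Here $\mat{R}$, and the sketch $\mat{S}\mat{A}$ itself, are really their computed versions; by the forward-stability hypothesis on $\mat{S}$ and the backward stability of Householder \QR, the computed $\mat{R}$ is the exact R-factor of a matrix $\order(u\norm{\mat{A}})$-close to $\mat{S}\mat{A}$, so \cref{fact:subspace_embedding} applies with the distortion inflated by $\order(u)$, which I absorb into $c_1$.)

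Next I would derive the perturbed error recursion. Writing $\vec{e}_i = \vec{x} - \hat{\vec{x}}_i$ and using the normal equations $\mat{A}^\top\vec{r}(\vec{x}) = \vec{0}$ to cancel the ``signal'', one finds $\mat{A}^\top(\vec{b} - \mat{A}\hat{\vec{x}}_i) = \mat{A}^\top\mat{A}\vec{e}_i$, recovering in exact arithmetic the recursion $\vec{e}_{i+1} = \mat{R}^{-1}\mat{G}\mat{R}\vec{e}_i$ of \cref{sec:it-sk-analysis}. Passing to the variable $\vec{f}_i \coloneqq \mat{R}\vec{e}_i$ --- equal to $\mat{S}\mat{A}\vec{e}_i$ up to the orthonormal factor $\mat{Q}$, and the one in which $\mat{G}$ is a contraction with $\norm{\mat{G}} \le g_{\rm IS}$ --- the floating-point iteration takes the form $\vec{f}_{i+1} = (\Id + \mat{F}_i)^{-1}(\mat{G} + \mat{F}_i)\vec{f}_i + \vec{\phi}_i$, where $\mat{F}_i \coloneqq \mat{R}^{-\top}\mat{E}_i\mat{R}^{-1}$ and the forcing term $\vec{\phi}_i$ collects the contributions of $\Delta\vec{r}_i, \Delta\vec{c}_i, \Delta\vec{x}_i$ after being hit by $(\mat{A}\mat{R}^{-1})^\top$, $\mat{R}^{-\top}$, or $\mat{R}$.

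The heart of the argument is estimating $\mat{F}_i$, $\vec{\phi}_i$, and $\vec{f}_0$ tightly enough, and this is where the seminormal-equations subtleties enter. The structure of $\mat{E}_i$ gives $\mat{F}_i = \mat{R}^{-\top}\delta\mat{R}_1 + \delta\mat{R}_2\mat{R}^{-1} + \order(u^2)$, so $\norm{\mat{F}_i} = \order(u\,\kappa(\mat{R})) = \order(u\kappa)$: only one inverse factor of $\mat{R}$ meets each $\order(u\norm{\mat{R}})$ term, and the naive bound $\norm{\mat{R}^{-1}}^2\norm{\mat{E}_i} = \order(u\kappa^2)$ is avoided --- this is exactly Bj\"orck's mechanism, and it is why the rate degrades only by $\order(\kappa u)$, not $\order(\kappa^2 u)$. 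Hence $\norm{(\Id + \mat{F}_i)^{-1}(\mat{G} + \mat{F}_i)} \le g_{\rm IS} + c_1\kappa u$. For $\vec{\phi}_i$, every residual-dependent rounding term is bounded through $\norm{\mat{A}\vec{e}_i} = \norm{\mat{A}\mat{R}^{-1}\vec{f}_i} \le (1-\varepsilon)^{-1}\norm{\vec{f}_i}$ (using \cref{fact:subspace_embedding}) rather than $\norm{\mat{A}}\norm{\vec{e}_i}$, and $\norm{\hat{\vec{x}}_i}, \norm{\hat{\vec{r}}_i}, \norm{\hat{\vec{d}}_i}$ are controlled in terms of $\norm{\vec{x}}, \norm{\vec{r}(\vec{x})}, \norm{\vec{f}_i}$ via \cref{fact:subspace_embedding,fact:sketch-and-solve}. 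This yields the scalar recursion
\begin{equation*}
  \norm{\vec{f}_{i+1}} \le (g_{\rm IS} + c_1\kappa u)\norm{\vec{f}_i} + c_1 u\left(\norm{\mat{A}}\norm{\vec{x}} + \kappa\norm{\vec{r}(\vec{x})}\right),
\end{equation*}
valid under an enveloping induction keeping $\norm{\vec{f}_i}$ (hence $\norm{\hat{\vec{x}}_i}$, etc.) bounded. For the initial iterate I would write $\norm{\vec{f}_0} = \norm{\mat{S}\mat{A}(\vec{x}-\hat{\vec{x}}_0)} \le (1+\varepsilon)\norm{\mat{A}(\vec{x}-\hat{\vec{x}}_0)}$ and bound the exact part by \cref{fact:sketch-and-solve} and the rounding part by Wedin's \emph{residual} bound (\cref{thm:wedin}) applied to the \emph{sketched} least-squares problem --- the correct metric to avoid a spurious factor of $\kappa$ --- obtaining $\norm{\vec{f}_0} \le 2\tfrac{1+\varepsilon}{1-\varepsilon}\sqrt{\varepsilon}\,\norm{\vec{r}(\vec{x})} + c_1 u(\norm{\mat{A}}\norm{\vec{x}} + \kappa\norm{\vec{r}(\vec{x})})$.

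Finally I would unroll the recursion and translate back. Unrolling gives $\norm{\vec{f}_i} \le (g_{\rm IS}+c_1\kappa u)^i\norm{\vec{f}_0} + \bigl(1-(g_{\rm IS}+c_1\kappa u)\bigr)^{-1} c_1 u(\norm{\mat{A}}\norm{\vec{x}}+\kappa\norm{\vec{r}(\vec{x})})$, where the geometric-series factor is an absolute constant since $g_{\rm IS} < 1$ for $\varepsilon \le 0.29$ and $c_1\kappa u$ is forced small by the hypothesis $c_1\kappa u < 1$ (and is $\le 10$ under the extra hypothesis $g_{\rm IS}+c_1\kappa u \le 0.9$). Converting with $\norm{\vec{r}(\vec{x})-\vec{r}(\hat{\vec{x}}_i)} = \norm{\mat{A}\mat{R}^{-1}\vec{f}_i} \le (1-\varepsilon)^{-1}\norm{\vec{f}_i}$ and $\norm{\vec{x}-\hat{\vec{x}}_i} \le \norm{\mat{R}^{-1}}\norm{\vec{f}_i} \le \tfrac{\kappa}{(1-\varepsilon)\norm{\mat{A}}}\norm{\vec{f}_i}$, and checking $\tfrac{2(1+\varepsilon)}{(1-\varepsilon)^2} \le 20$ for $\varepsilon \le 0.29$, delivers \cref{eq:guarantees}. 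The ``in particular'' claim then follows by taking $q = \order(\log(1/u))$ so that the geometric term drops below the rounding floor --- which, being of the form \cref{eq:ls_forward}, is the forward-stable guarantee --- together with the cost accounting: $\order(mn\zeta) = \order(mn\log n)$ to form $\mat{S}\mat{A}$, $\order(dn^2) = \order(n^3\log n)$ to \QR-factorize it, $q = \order(\log(1/u))$ iterations at $\order(mn)$ each, and the stated $d$, $\zeta$ guaranteeing distortion $\varepsilon \le 0.29$ with high probability by \cref{sec:subspace}. The main obstacle is precisely the $\kappa$-versus-$\kappa^2$ distinction: a careless treatment of the seminormal-equations-type solve would inflate the rate to $g_{\rm IS}+\order(\kappa^2 u)$ (requiring $\kappa^2 u$ small, not merely $\kappa u$ small) and the error floor by a factor of $\kappa$, so both the structural form of $\mat{E}_i$ and the discipline of measuring residual-type quantities through $\mat{A}\mat{R}^{-1}$ are essential; the accompanying bookkeeping induction is routine but fiddly.
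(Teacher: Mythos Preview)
Your proposal is correct and follows essentially the same approach as the paper: analyze one loop iteration in floating point, pass to the preconditioned variable $\mat{R}(\vec{x}-\hat{\vec{x}}_i)$, exploit the structured backward error of the two triangular solves to bound $\norm{\mat{R}^{-\top}\mat{E}_i\mat{R}^{-1}}=\order(\kappa u)$ (not $\order(\kappa^2 u)$), control the initial iterate via sketch-and-solve plus Wedin on the sketched problem, and unroll a scalar recursion. The only cosmetic difference is that you package everything as a one-step recursion on $\norm{\vec{f}_i}$, whereas the paper carries both $\norm{\vec{x}-\hat{\vec{x}}_i}$ and $\norm{\vec{r}(\vec{x})-\vec{r}(\hat{\vec{x}}_i)}$ separately, combines them into $t_i=\mathrm{err}_i+(\kappa/\norm{\mat{A}})\,\mathrm{rerr}_i$, and solves a recurrence with a geometric sum over past iterates; the content is the same.
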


We stress that the bounds \cref{eq:guarantees} hold for any subspace embedding $\mat{S}$ satisfying the hypotheses, not just the specific embedding suggested in \cref{sec:implementation}.

The analysis is similar to the analysis of one step of iterative refinement on the seminormal equations by Bj\"ork \cite{Bjo87}.
The main innovations are the extension of the analysis to multiple steps of refinement and the incorporation of randomized dimensionality reduction.
We expect that this analysis will extend fairly directly to iterative sketching with damping or momentum (\cref{sec:damp-mom}).

This section constitutes a proof of \cref{thm:it_sk_forward}.
Throughout this section, we use the following notation:
Hats denote the numerically computed quantities in \cref{alg:it-sk}.
We use $\C,\C,\ldots$ to denote constants, depending polynomially on $m$, $n$, and the sketching dimension $d$ but no other problem parameters.
\resetconstant
We make the standing assumption that $\Cr{const:c1} \kappa u < 1$ where the constant $\Cl{const:c1}$ will at various points will be taken to be as large as needed to make the analysis work.
To simplify the presentation, we make no effort to track or optimize the implicit constants in this analysis.

\subsection{The computed R-factor}

We have assumed multiplication by $\mat{S}$ is forward stable, which means
\begin{equation} \label{eq:S_forward_stable}
  \mat{\hat{B}} = \mat{B} + \dta{B}, \quad \vec{\hat{Sb}} = \vec{Sb} + \vec{\delta}, \quad \norm{\dta{B}} \le \Cr{const:c1} \norm{\mat{A}} u, \quad \norm{\vec{\delta}} \le \Cr{const:c1} \norm{\vec{b}} u.
\end{equation}
With this stability assumption in place, we study the numerical accuracy of the computed R-factor $\mat{\hat{R}}$.
By the backwards stability of Householder QR factorization \cite[Thm.~19.4]{Hig02}, there exists a matrix $\mat{U}\in\real^{d\times n}$ with orthonormal columns such that
\begin{equation} \label{eq:B_pert}
  \mat{\hat{B}} = \mat{U}\mat{\hat{R}} \eqqcolon \mat{U} (\mat{R}+\dta{R}), \quad \norm{\dta{R}} \le \C \norm{\mat{A}} u.
\end{equation}
Consequently, we have
\begin{equation} \label{eq:R_inv_pert}
  \mat{\hat{R}}^{-1} = \mat{R}^{-1} (\Id + \mat{\Gamma}), \quad \norm{\mat{\Gamma}} \le \Cl{const:R_inv} \kappa u. 
\end{equation}
This leads to singular value bounds analogous to \cref{fact:subspace_embedding} in floating point:
\begin{proposition}[Singular value bounds, floating point] \label{prop:singular_values_fp}
  With the present setting and assumptions,
  \begin{equation*}
    \sigma_{\rm max}(\mat{\hat{R}}) \le (1 + \varepsilon + \Cl{const:svfp} u) \sigma_{\rm max}(\mat{A}), \quad \sigma_{\rm min}(\mat{\hat{R}}) \ge (1 - \varepsilon - \Cr{const:svfp} u) \sigma_{\rm min}(\mat{A}).
  \end{equation*}
  In addition, we have
  \begin{equation*}
    \sigma_{\rm max}(\mat{A}\mat{\hat{R}}^{-1}) \le \frac{1}{1-\varepsilon} + \Cr{const:svfp} \kappa u, \quad \sigma_{\rm min}(\mat{A}\mat{\hat{R}}^{-1}) \ge \frac{1}{1+\varepsilon} - \Cr{const:svfp} \kappa u.
  \end{equation*}
\end{proposition}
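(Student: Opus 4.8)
The plan is to transfer the exact-arithmetic singular value bounds of \cref{fact:subspace_embedding} through the perturbation estimates \cref{eq:B_pert} and \cref{eq:R_inv_pert} already established for the computed R-factor. First I would handle the bounds on $\mat{\hat{R}}$ itself. Writing $\mat{\hat{R}} = \mat{R} + \dta{R}$ with $\norm{\dta{R}} \le \C \norm{\mat{A}} u$, Weyl's inequality gives $|\sigma_{\rm max}(\mat{\hat{R}}) - \sigma_{\rm max}(\mat{R})| \le \norm{\dta{R}}$ and similarly for $\sigma_{\rm min}$. Combining with \cref{fact:subspace_embedding}, namely $\sigma_{\rm max}(\mat{R}) \le (1+\varepsilon)\sigma_{\rm max}(\mat{A})$ and $\sigma_{\rm min}(\mat{R}) \ge (1-\varepsilon)\sigma_{\rm min}(\mat{A})$, and using $\norm{\mat{A}} = \sigma_{\rm max}(\mat{A}) = \kappa \sigma_{\rm min}(\mat{A})$ to absorb $\norm{\mat{A}}$ into a multiple of $\sigma_{\rm min}(\mat{A})$ where needed, yields the first pair of inequalities with a suitable constant $\Cr{const:svfp}$. (For the $\sigma_{\rm min}(\mat{\hat{R}})$ bound one writes $\norm{\dta{R}} \le \C \norm{\mat{A}} u = \C \kappa u \cdot \sigma_{\rm min}(\mat{A})$, so the $u$-term does carry a hidden $\kappa$; this is consistent with the standing assumption $\Cr{const:c1}\kappa u < 1$ keeping $\sigma_{\rm min}(\mat{\hat{R}})$ bounded away from zero.)

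Next I would handle $\mat{A}\mat{\hat{R}}^{-1}$. Using \cref{eq:R_inv_pert}, $\mat{A}\mat{\hat{R}}^{-1} = \mat{A}\mat{R}^{-1}(\Id + \mat{\Gamma})$ with $\norm{\mat{\Gamma}} \le \Cr{const:R_inv}\kappa u$. Then submultiplicativity gives $\sigma_{\rm max}(\mat{A}\mat{\hat{R}}^{-1}) \le \sigma_{\rm max}(\mat{A}\mat{R}^{-1})(1 + \norm{\mat{\Gamma}}) \le \tfrac{1}{1-\varepsilon}(1 + \Cr{const:R_inv}\kappa u)$, and expanding the product and absorbing the $\tfrac{1}{1-\varepsilon}\Cr{const:R_inv}\kappa u$ term (bounded since $\varepsilon \le 0.29$) into a fresh constant times $\kappa u$ gives the claimed upper bound. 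For the lower bound, $\sigma_{\rm min}(\mat{A}\mat{\hat{R}}^{-1}) \ge \sigma_{\rm min}(\mat{A}\mat{R}^{-1})\,\sigma_{\rm min}(\Id + \mat{\Gamma}) \ge \tfrac{1}{1+\varepsilon}(1 - \norm{\mat{\Gamma}}) \ge \tfrac{1}{1+\varepsilon} - \Cr{const:svfp}\kappa u$, again after absorbing constants. Here I invoke \cref{fact:subspace_embedding} for the exact-arithmetic bounds on $\sigma_{\rm max}(\mat{A}\mat{R}^{-1})$ and $\sigma_{\rm min}(\mat{A}\mat{R}^{-1})$.

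There is no real obstacle here: the proposition is a routine consequence of Weyl's inequality, submultiplicativity of singular values, and the already-derived facts \cref{eq:B_pert}--\cref{eq:R_inv_pert}. The only points requiring minor care are (i) bookkeeping which $u$-terms implicitly carry a factor of $\kappa$ — this is why the $\sigma_{\rm min}$ and $\mat{A}\mat{\hat{R}}^{-1}$ bounds display $\kappa u$ rather than $u$ — and (ii) making sure the constants $\Cr{const:svfp}$ are chosen uniformly (take the max of the constants arising in the four inequalities) and remain polynomial in $m$, $n$, $d$, which they do since $\C$ and $\Cr{const:R_inv}$ already are. I would state the four bounds in a single short displayed computation each and leave the constant-chasing implicit, consistent with the paper's stated convention of not tracking implicit constants. $\proofbox$
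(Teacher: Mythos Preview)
Your proposal is correct and matches the paper's own proof, which simply cites \cref{fact:subspace_embedding}, \cref{eq:B_pert,eq:R_inv_pert}, Weyl's theorem, and the Gelfand--Naimark theorem; your use of elementary submultiplicativity in place of Gelfand--Naimark is equivalent in this setting. Your observation that the $\sigma_{\rm min}(\mat{\hat{R}})$ bound implicitly carries a factor of $\kappa$ is a fair reading and is harmless under the standing assumption $\Cr{const:c1}\kappa u < 1$.
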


\begin{proof}
  This result is immediate from the singular value bounds in exact arithmetic (\cref{fact:subspace_embedding}), the bounds \cref{eq:B_pert,eq:R_inv_pert}, and the perturbation theorems of Weyl \cite[Thm.~III.2.1]{Bha97} and Gelfand--Naimark \cite[Thm.~III.4.5]{Bha97}.
\end{proof}

\subsection{Accuracy of initial iterate}
\label{sec:sketch-solve-numerical}

Our initial iterate $\vec{x}_0$ for iterative sketching is constructed by the sketch-and-solve method.
Therefore, we begin by an assessment of the numerical accuracy of the computed solution $\vec{\hat{x}}_0$.
For reasons that will become clear later, we will need a bound on $\norm{\mat{\hat{R}}(\vec{\hat{x}}_0 - \vec{x})}$.

\begin{lemma}[Bound on initial error] \label{lem:initial-error}
  With the present notation and assumptions,
  \begin{equation} \label{eq:initial-bound}
    \big\|\mat{\hat{R}}(\vec{x} - \vec{\hat{x}}_0 )\big\| \le 2.82\sqrt{\varepsilon} \norm{\vec{r}(\vec{x})} + \C u ( \norm{\mat{A}}\norm{\vec{x}} + \kappa \norm{\vec{r}(\vec{x})}).
  \end{equation}
\end{lemma}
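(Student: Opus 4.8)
The plan is to show that $\vec{\hat{x}}_0$, although computed in floating point, is the \emph{exact} solution of a slightly perturbed sketch-and-solve problem, and then to analyze that perturbed problem in exact arithmetic, organizing everything around the orthogonal projector that makes sketch-and-solve accurate.

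First I would combine the forward-stability assumption \cref{eq:S_forward_stable} on multiplication by $\mat{S}$ with the backward stability of solving a least-squares problem by Householder \QR \cite[Thm.~20.3]{Hig02} (applied to the computed factors $\mat{\hat{Q}},\mat{\hat{R}}$, whose backward error is already recorded in \cref{eq:B_pert}). Folding the error in forming $\mat{\hat{B}}$, the backward error of applying $\mat{\hat{Q}}^\top$, and the backward error of the triangular solve with $\mat{\hat{R}}$ into a single perturbation, the computed iterate satisfies
\begin{equation*}
  \vec{\hat{x}}_0 = \argmin_{\vec{y}\in\real^n} \norm{(\mat{S}\vec{b}+\vec{f}_0) - (\mat{S}\mat{A}+\mat{F}_0)\vec{y}}, \qquad \norm{\mat{F}_0} \le \C\norm{\mat{A}}u, \quad \norm{\vec{f}_0} \le \C\norm{\vec{b}}u,
\end{equation*}
where I use $\norm{\mat{S}\mat{A}}\le(1+\varepsilon)\norm{\mat{A}}$ and $\norm{\mat{S}\vec{b}}\le(1+\varepsilon)\norm{\vec{b}}$. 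Writing $\tilde{\mat{A}}\coloneqq\mat{S}\mat{A}+\mat{F}_0$, $\tilde{\vec{b}}\coloneqq\mat{S}\vec{b}+\vec{f}_0$, and letting $\mat{P}\coloneqq\tilde{\mat{A}}\tilde{\mat{A}}^\dagger$ be the orthogonal projector onto $\range(\tilde{\mat{A}})$, the normal equations for this perturbed problem give $\tilde{\mat{A}}\vec{\hat{x}}_0 = \mat{P}\tilde{\vec{b}}$.

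The heart of the argument is an exact identity for $\tilde{\mat{A}}(\vec{x}-\vec{\hat{x}}_0)$. Starting from $\vec{b}=\mat{A}\vec{x}+\vec{r}(\vec{x})$, so that $\mat{S}\vec{b}=\tilde{\mat{A}}\vec{x}-\mat{F}_0\vec{x}+\mat{S}\vec{r}(\vec{x})$, and using $\mat{P}\tilde{\mat{A}}=\tilde{\mat{A}}$, a short cancellation gives
\begin{equation*}
  \tilde{\mat{A}}(\vec{x}-\vec{\hat{x}}_0) = \mat{P}\bigl(\mat{F}_0\vec{x} - \mat{S}\vec{r}(\vec{x}) - \vec{f}_0\bigr).
\end{equation*}
Routing the computation through $\mat{P}$ rather than through the normal-equations matrix $\tilde{\mat{A}}^\top\tilde{\mat{A}}$ is the key point: since $\norm{\mat{P}}=1$, the perturbations $\mat{F}_0,\vec{f}_0$ are \emph{not} amplified by $\kappa^2$, and the cross term $\mat{F}_0^\top(\mat{S}\mat{A})$ that makes seminormal-equations-type computations unstable never appears. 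Taking norms, $\norm{\mat{P}(\mat{F}_0\vec{x}-\vec{f}_0)} \le \C u\norm{\mat{A}}\norm{\vec{x}} + \C u\norm{\vec{b}} \le \C u(\norm{\mat{A}}\norm{\vec{x}}+\norm{\vec{r}(\vec{x})})$, while the dominant term is handled by the sketch-and-solve estimate: with $\mat{P}_0$ the projector onto $\range(\mat{S}\mat{A})$, the Pythagorean identity and the subspace embedding property on $\range(\onebytwo{\mat{A}}{\vec{b}})$, together with $\vec{r}(\vec{x})\perp\range(\mat{A})$, give
\begin{equation*}
  \norm{\mat{P}_0\mat{S}\vec{r}(\vec{x})}^2 = \norm{\mat{S}\vec{r}(\vec{x})}^2 - \norm{(\Id-\mat{P}_0)\mat{S}\vec{r}(\vec{x})}^2 \le \bigl((1+\varepsilon)^2-(1-\varepsilon)^2\bigr)\norm{\vec{r}(\vec{x})}^2 = 4\varepsilon\norm{\vec{r}(\vec{x})}^2,
\end{equation*}
and, since $\norm{\mat{F}_0}\le\C u\norm{\mat{A}}$ and $\sigma_{\rm min}(\mat{S}\mat{A})\ge(1-\varepsilon)\norm{\mat{A}}/\kappa$ (\cref{fact:subspace_embedding}), standard projector perturbation theory gives $\norm{\mat{P}-\mat{P}_0}\le\C\kappa u$, hence $\norm{\mat{P}\mat{S}\vec{r}(\vec{x})} \le 2\sqrt{\varepsilon}\norm{\vec{r}(\vec{x})} + \C\kappa u\norm{\vec{r}(\vec{x})}$. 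Altogether $\norm{\tilde{\mat{A}}(\vec{x}-\vec{\hat{x}}_0)} \le 2\sqrt{\varepsilon}\norm{\vec{r}(\vec{x})} + \C u(\norm{\mat{A}}\norm{\vec{x}}+\kappa\norm{\vec{r}(\vec{x})})$.

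Finally I would transfer this bound to $\mat{\hat{R}}$. Since $\mat{\hat{R}}=\mat{U}^\top\mat{\hat{B}}$ with $\mat{U}$ having orthonormal columns \cref{eq:B_pert}, $\norm{\mat{\hat{R}}\vec{v}} \le \norm{\mat{\hat{B}}\vec{v}} \le \norm{\tilde{\mat{A}}\vec{v}} + \C u\norm{\mat{A}}\norm{\vec{v}}$, so $\norm{\mat{\hat{R}}(\vec{x}-\vec{\hat{x}}_0)} \le \norm{\tilde{\mat{A}}(\vec{x}-\vec{\hat{x}}_0)} + \C u\norm{\mat{A}}\norm{\vec{x}-\vec{\hat{x}}_0}$; and because $\tilde{\mat{A}}$ has full column rank with $\sigma_{\rm min}(\tilde{\mat{A}})\ge\C^{-1}\norm{\mat{A}}/\kappa$ (as in \cref{prop:singular_values_fp}), $\norm{\vec{x}-\vec{\hat{x}}_0}\le\C\tfrac{\kappa}{\norm{\mat{A}}}\norm{\tilde{\mat{A}}(\vec{x}-\vec{\hat{x}}_0)}$, so the correction term is $O\bigl(\kappa u\cdot\norm{\tilde{\mat{A}}(\vec{x}-\vec{\hat{x}}_0)}\bigr)$ and is absorbed under the standing assumption $\Cr{const:c1}\kappa u<1$. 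Collecting terms and bounding $2\le2.82$ yields \cref{eq:initial-bound}. I expect the main obstacle to be precisely this structural choice --- analyzing through $\mat{P}$ rather than the ill-conditioned $\tilde{\mat{A}}^\top\tilde{\mat{A}}$, so that the forward error of the initial solve enters only at the $u(\norm{\mat{A}}\norm{\vec{x}}+\kappa\norm{\vec{r}(\vec{x})})$ level --- with a secondary bit of care required to keep the coefficient of $\norm{\mat{A}}\norm{\vec{x}}$ free of $\kappa$ while tracking the perturbed projector and the various $u$- and $\sqrt{\varepsilon}$-factors.
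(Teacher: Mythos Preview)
Your proof is correct but takes a genuinely different route from the paper's. The paper's argument is more modular: it inserts the \emph{exact} sketch-and-solve solution $\vec{x}_0$ and splits
\[
\big\|\mat{\hat{R}}(\vec{x}-\vec{\hat{x}}_0)\big\| \le \norm{\dta{R}}\norm{\vec{\hat{x}}_0-\vec{x}_0} + \norm{\mat{R}(\vec{\hat{x}}_0-\vec{x}_0)} + \norm{\mat{R}(\vec{x}_0-\vec{x})} + \norm{\dta{R}}\norm{\vec{x}_0-\vec{x}},
\]
handling the $\vec{x}_0-\vec{x}$ terms by \cref{fact:sketch-and-solve} (whence the constant $2/(1-\varepsilon)\le 2.82$) and the $\vec{\hat{x}}_0-\vec{x}_0$ terms by backward stability of Householder \QR plus Wedin's theorem (\cref{thm:wedin}) on the sketched problem. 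You instead absorb everything into a single perturbed sketched problem and derive the exact identity $\tilde{\mat{A}}(\vec{x}-\vec{\hat{x}}_0)=\mat{P}(\mat{F}_0\vec{x}-\mat{S}\vec{r}(\vec{x})-\vec{f}_0)$, then bound $\norm{\mat{P}_0\mat{S}\vec{r}(\vec{x})}\le 2\sqrt{\varepsilon}\norm{\vec{r}(\vec{x})}$ directly via Pythagoras and the embedding property --- essentially inlining the proof of \cref{fact:sketch-and-solve} rather than citing it. Your approach avoids Wedin's theorem entirely and yields a sharper leading constant ($2\sqrt{\varepsilon}$ versus $2.82\sqrt{\varepsilon}$), at the price of needing a projector-perturbation bound $\norm{\mat{P}-\mat{P}_0}\le \C\kappa u$ that the paper's route does not. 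The paper's proof is shorter because it reuses existing black boxes; yours is more self-contained and exposes more clearly why the $\sqrt{\varepsilon}$ scaling appears and why no $\kappa^2$ amplification occurs.
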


\begin{proof}
  First, use the triangle inequality to bound
\siamorarxiv{
  \begin{equation} \label{eq:initial-error-bound}
    \begin{split}
      \big\|\mat{\hat{R}}(\vec{x}-\vec{\hat{x}}_0)\big\| &\le \norm{\dta{R}} \norm{\vec{\hat{x}}_0 - \vec{x}_0} + \norm{\mat{R}(\vec{\hat{x}}_0 - \vec{x}_0)} \\&\qquad+ \norm{\mat{R}(\vec{x}_0 - \vec{x})} + \norm{\dta{R}} \norm{\vec{x}_0 - \vec{x}}.
      \end{split}
    \end{equation}
}{
  \begin{equation} \label{eq:initial-error-bound}
    \begin{split}
      \big\|\mat{\hat{R}}(\vec{x}-\vec{\hat{x}}_0)\big\| &\le \norm{\dta{R}} \norm{\vec{\hat{x}}_0 - \vec{x}_0} + \norm{\mat{R}(\vec{\hat{x}}_0 - \vec{x}_0)} + \norm{\mat{R}(\vec{x}_0 - \vec{x})} + \norm{\dta{R}} \norm{\vec{x}_0 - \vec{x}}.
      \end{split}
  \end{equation}
}
  We now bound each of the terms on the right-hand side of \cref{eq:initial-error-bound}.
  By \cref{fact:sketch-and-solve} with $\varepsilon\le0.29$, 
\siamorarxiv{
  \begin{equation} \label{eq:sketch-and-solve-recap}
    \begin{split}
      \norm{\vec{x} - \vec{x}_0} &\le 2.82\sqrt{\varepsilon} \frac{\kappa}{\norm{\mat{A}}} \norm{\vec{r}(\vec{x})}, \\ \norm{\vec{R}(\vec{x} - \vec{x}_0)} &= \norm{\vec{r}(\vec{x}) - \vec{r}(\vec{x}_0)} \le 2.82\sqrt{\varepsilon} \norm{\vec{r}(\vec{x})}.
      \end{split}
    \end{equation}
}{
  \begin{equation} \label{eq:sketch-and-solve-recap}
    \begin{split}
      \norm{\vec{x} - \vec{x}_0} &\le 2.82\sqrt{\varepsilon} \frac{\kappa}{\norm{\mat{A}}} \norm{\vec{r}(\vec{x})}, \quad \norm{\vec{R}(\vec{x} - \vec{x}_0)} = \norm{\vec{r}(\vec{x}) - \vec{r}(\vec{x}_0)} \le 2.82\sqrt{\varepsilon} \norm{\vec{r}(\vec{x})}.
      \end{split}
    \end{equation}
}
  By the forward stability of multiplication by $\mat{S}$ \cref{eq:S_forward_stable} and the backward stability of least-squares solution by Householder \QR factorization \cite[Thm.~20.3]{Hig02}, we obtain
  \begin{equation*}
    \vec{\hat{x}}_0 = \argmin_{\vec{y} \in \real^n} \norm{(\mat{B} + \dta{B}')\vec{y} - (\vec{Sb} + \vec{\delta}')}, \quad \norm{\dta{B}'} \le \Cl{const:ss_backward} \norm{\mat{A}} u, \quad \norm{\vec{\delta}'} \le \Cr{const:ss_backward} \norm{\vec{b}} u.
  \end{equation*}
  Wedin's theorem (\cref{thm:wedin}) and our singular value estimates for $\mat{R}$ (\cref{fact:subspace_embedding}) then imply
  \begin{align*}
    \norm{\vec{\hat{x}}_0 - \vec{x}_0} &\le \Cl{const:ss_forward} \kappa u \left( \norm{\vec{x}_0} + \frac{\kappa}{\norm{\mat{A}}} \norm{\vec{r}(\vec{x}_0)} \right), \\
    \norm{\mat{R}(\vec{\hat{x}}_0 - \vec{x}_0)} &= \norm{\vec{r}(\vec{\hat{x}}_0) - \vec{r}(\vec{x}_0)} \le \Cr{const:ss_forward} u \left( \norm{\mat{A}} \norm{\vec{x}_0} + \kappa \norm{\vec{r}(\vec{x}_0)} \right).
  \end{align*}
  Apply the triangle inequality $\norm{\vec{x}_0} \le \norm{\vec{x}} + \norm{\vec{x} - \vec{x}_0}$, \cref{fact:sketch-and-solve}, and \cref{eq:sketch-and-solve-recap} to simplify this as
\siamorarxiv{
  \begin{equation} \label{eq:sketching-numerical}
    \begin{split}
      \norm{\vec{x}_0 - \vec{\hat{x}}_0} &\le \Cl{const:ss_forward_2} \kappa u \left( \norm{\vec{x}} + \frac{\kappa}{\norm{\mat{A}}} \norm{\vec{r}(\vec{x})} \right), \\ \norm{\mat{R}(\vec{x}_0 - \vec{\hat{x}}_0)} &\le \Cr{const:ss_forward_2} u \left( \norm{\mat{A}} \norm{\vec{x}} + \kappa \norm{\vec{r}(\vec{x})} \right).
      \end{split}
  \end{equation}
}{
  \begin{equation} \label{eq:sketching-numerical}
    \begin{split}
      \norm{\vec{x}_0 - \vec{\hat{x}}_0} &\le \Cl{const:ss_forward_2} \kappa u \left( \norm{\vec{x}} + \frac{\kappa}{\norm{\mat{A}}} \norm{\vec{r}(\vec{x})} \right), \quad \norm{\mat{R}(\vec{x}_0 - \vec{\hat{x}}_0)} \le \Cr{const:ss_forward_2} u \left( \norm{\mat{A}} \norm{\vec{x}} + \kappa \norm{\vec{r}(\vec{x})} \right).
      \end{split}
  \end{equation}
}
  Substituting \cref{eq:B_pert,eq:sketch-and-solve-recap,eq:sketching-numerical} into \cref{eq:initial-error-bound} yields the conclusion \cref{eq:initial-bound}.
\end{proof}

\subsection{Iterations for the error} \label{sec:iteration}

We now derive an iteration for the error $\vec{x} - \vec{\hat{x}}_i$ and the corresponding difference in residual $\vec{r}(\vec{x})-\vec{r}(\vec{\hat{x}}_i)$.
Consider a single loop iteration in \cref{alg:it-sk} and introduce quantities $\vec{e}_i,\vec{f}_i,\vec{g}_i,\vec{k}_i$ so that the following equations hold exactly
\begin{subequations} \label{eq:with_errors}
  \begin{align}
    \vec{\hat{r}}_i &= \vec{b} - \mat{A}\vec{\hat{x}}_i + \vec{f}_i, \\
    \vec{\hat{c}}_i &= \mat{A}^\top \vec{\hat{r}}_i + \vec{k}_i, \\
    \mat{\hat{R}}^\top \mat{\hat{R}} \vec{\hat{d}}_i &= \vec{\hat{c}}_i + \vec{g}_i, \label{eq:d} \\
    \vec{\hat{x}}_{i+1} &= \vec{\hat{x}}_i + \vec{\hat{d}}_i + \vec{e}_i.
  \end{align}
\end{subequations}
Combining these equations, we obtain
\begin{equation*} \label{eq:iteration2}
  \vec{\hat{x}}_{i+1} = \mat{\hat{R}}^{-1}\mat{G}\mat{\hat{R}}\vec{\hat{x}}_i + \mat{\hat{R}}^{-1}\mat{\hat{R}}^{-\top}\mat{A}^\top \vec{b} + \mat{\hat{R}}^{-1}\vec{h}_i, \quad \vec{h}_i \coloneqq \mat{\hat{R}}^{-\top}(\mat{A}^\top \vec{f}_i + \vec{g}_i + \vec{k}_i) + \vec{e}_i.
\end{equation*}
where the iteration matrix is
\begin{equation*}
  \mat{G} = \Id - \mat{\hat{R}}^{-\top} \mat{A}^\top\mat{A}\mat{\hat{R}}^{-1}.
\end{equation*}
Using the normal equations, we can easily check that
\begin{equation*} \label{eq:iteration}
  \vec{x} = \mat{\hat{R}}^{-1}\mat{G}\mat{\hat{R}}\vec{x} + \mat{\hat{R}}^{-1}\mat{\hat{R}}^{-\top} \mat{A}^\top \vec{b}.
\end{equation*}
Thus, we get the error
\begin{equation*}
  \vec{x} - \vec{\hat{x}}_{i+1} = \mat{\hat{R}}^{-1}\mat{G}\mat{\hat{R}}(\vec{x}-\vec{\hat{x}}_i) - \mat{\hat{R}}^{-1}\vec{h}_i.
\end{equation*}
The error over all $i$ iterations is thus
\begin{equation} \label{eq:iterated}
  \vec{x} - \vec{\hat{x}}_i = \mat{\hat{R}}^{-1}\mat{G}^i\mat{\hat{R}}(\vec{x}-\vec{\hat{x}}_0) - \sum_{j=1}^{i-1} \mat{\hat{R}}^{-1} \mat{G}^{i-1-j} \vec{h}_j.
\end{equation}
Taking norms in \cref{eq:iterated}, we see that
\begin{equation*}
  \norm{\vec{x} - \vec{\hat{x}}_i} \le \Big\|\mat{\hat{R}}^{-1}\Big\|\norm{\mat{G}}^i \big\|\mat{\hat{R}}(\vec{x}-\vec{\hat{x}}_0 )\big\| + \sum_{j=1}^{i-1} \Big\|\mat{\hat{R}}^{-1}\Big\| \norm{\mat{G}}^{i-1-j} \norm{\vec{h}_j}.
\end{equation*}
The norm of the iteration matrix $\mat{G}$ can now be estimated using \cref{prop:singular_values_fp}:
\begin{equation*}
  \norm{\mat{G}}= \norm{\Id - \mat{\hat{R}}^{-\top}\mat{A}^\top \mat{A}\mat{\hat{R}}^{-1}} \le g_{\rm IS} + \Cr{const:svfp} \kappa u \eqqcolon g.
\end{equation*}
In addition, by \cref{prop:singular_values_fp} and our assumption that $\Cr{const:c1}\kappa u < 1$ a constant $\Cr{const:c1}$ as large as we need, we obtain an estimate
\begin{equation*}
  \big\|\mat{\hat{R}}^{-1}\big\| = \frac{1}{\sigma_{\rm min}(\mat{\hat{R}})} \le 2\frac{\kappa}{\norm{\mat{A}}}.
\end{equation*}
Thus, we have the error bound
\begin{equation} \label{eq:error_bound}
  \norm{\vec{x}-\vec{\hat{x}}_i} \le 2\frac{\kappa}{\norm{\mat{A}}} \Big[ g^i \big\|\mat{\hat{R}}(\vec{x}-\vec{\hat{x}}_0)\big\| + \sum_{j=1}^{i-1} g^{i-1-j} \norm{\vec{h}_j} \Big].
\end{equation}
Proving the rapid convergence of iterative sketching just requires us to bound the norms of the $\vec{h}_i$.

We can also derive an iteration for the error in the residual.
Indeed, \cref{eq:iterated} to write
\begin{equation*}
  \vec{r}(\vec{x}) - \vec{r}(\vec{\hat{x}}_i) = \mat{A}(\vec{\hat{x}}_i - \vec{x}) = \mat{A}\mat{\hat{R}}^{-1}\mat{G}^i \mat{\hat{R}}(\vec{x}-\vec{\hat{x}}_0) + \sum_{j=1}^{i-1} \mat{A}\mat{\hat{R}}^{-1}\mat{G}^{i-1-j}\vec{h}_j.
\end{equation*}
Under our assumptions and with appropriate choice of $\Cr{const:c1}$, we have $\big\|\mat{A}\mat{\hat{R}}^{-1}\big\| \le 3$.
Thus,
\begin{equation} \label{eq:residual_bound}
  \norm{\vec{r}(\vec{\hat{x}}_i) - \vec{r}(\vec{x})} \le 3 g^i  \norm{\mat{\hat{R}}(\vec{\hat{x}}_0 - \vec{x})} + 3\sum_{j=1}^{i-1} g^{i-1-j} \norm{\vec{h_j}}.
\end{equation}

\subsection{Bounding the error quantities}

In this section, we derive bound on $\vec{h}_i$, beginning with its constituent parts $\vec{f}_i,\vec{k}_i,\vec{g}_i,\vec{e}_i$.
First, we use stability bounds for matrix--vector multiplication \cite[Eq.~(3.2)]{Hig02} to bound $\vec{f}_i$ as
\siamorarxiv{
\begin{equation} \label{eq:f_bound}
  \begin{split}
    \norm{\vec{f}_i} &\le \Cl{const:f} u (\norm{\mat{A}} \norm{\vec{\hat{x}}_i} + \norm{\vec{r}(\vec{\hat{x}}_i)}) \\&\le \Cr{const:f} u (\norm{\mat{A}} (\norm{\vec{\hat{x}}_i-\vec{x}} + \norm{\vec{x}}) + (\norm{\vec{r}(\vec{\hat{x}}_i)-\vec{r}(\vec{x})} + \norm{\vec{r}(\vec{x})})).
    \end{split}
\end{equation}
}{  
\begin{equation} \label{eq:f_bound}
  \begin{split}
    \norm{\vec{f}_i} &\le \Cl{const:f} u (\norm{\mat{A}} \norm{\vec{\hat{x}}_i} + \norm{\vec{r}(\vec{\hat{x}}_i)}) \le \Cr{const:f} u (\norm{\mat{A}} (\norm{\vec{\hat{x}}_i-\vec{x}} + \norm{\vec{x}}) + (\norm{\vec{r}(\vec{\hat{x}}_i)-\vec{r}(\vec{x})} + \norm{\vec{r}(\vec{x})})).
    \end{split}
\end{equation}  
}
Then, we use the matrix--vector stability bound again to obtain
\begin{equation} \label{eq:k_bound}
  \norm{\vec{k}_i} \le \Cl{const:k} u\norm{\mat{A}} (\norm{\vec{r}(\vec{\hat{x}}_i)} + \norm{\vec{f}_i}) \le \Cr{const:k} u\norm{\mat{A}} (\norm{\vec{r}(\vec{\hat{x}}_i) - \vec{r}(\vec{x})} + \norm{\vec{r}(\vec{x}} + \norm{\vec{f}_i}).
\end{equation}
Now, we seek to bound $\|\mat{\hat{R}}^{-\top}\vec{g}_i\|$.
By the backward stability of triangular solves \cite[Thm.~8.5]{Hig02}, there exist $\mat{E}_1,\mat{E}_2$ such that
\begin{equation} \label{eq:d_with_backward}
  (\mat{\hat{R}} + \mat{E}_1)^\top (\mat{\hat{R}} + \mat{E}_2)\vec{\hat{d}}_i = \vec{\hat{c}}_i,  \quad \norm{\mat{E}_1},\norm{\mat{E}_2} \le \C \norm{\mat{A}}u.
\end{equation}
Here, we also used \cref{prop:singular_values_fp}, which shows that $\|\mat{\hat{R}}\|$ is at most a small multiple of $\norm{\mat{A}}$.
Subtract from \cref{eq:with_errors} to obtain
\begin{equation*}
  (\mat{\hat{R}}^\top \mat{E}_2 + \mat{E}_1 \mat{R} + \mat{E}_1\mat{E}_2)\vec{\hat{d}}_i = \vec{g}_i.
\end{equation*}
Thus,
\begin{equation} \label{eq:Rg}
  \norm{\mat{\hat{R}}^{-\top}\vec{g}_i} = \norm{(\mat{E}_2 \mat{\hat{R}}^{-1} + \mat{\hat{R}}^{-\top}\mat{E}_1 + \mat{\hat{R}}^{-\top}\mat{E}_1 \mat{E}_2 \mat{\hat{R}}^{-1})\mat{\hat{R}}\vec{\hat{d}}_i} \le \C \kappa u \norm{\mat{\hat{R}}\vec{\hat{d}}_i}.
\end{equation}

Now, we bound $\|\mat{\hat{R}}\vec{\hat{d}}_i\|$.
Since $\mat{A}^\top \vec{r}(\vec{x}) = 0$, we can write
\begin{equation*}
  \mat{A}^\top \vec{\hat{r}}_i = \mat{A}^\top (\vec{\hat{r}}_i - \vec{r}(\vec{x})) = \mat{A}^\top (\vec{r}(\vec{x}_i)-\vec{r}(\vec{x})) + \mat{A}^\top \vec{f}_i.
\end{equation*}
Thus, we can write \cref{eq:d_with_backward} as 
\begin{equation*}
  \mat{\hat{R}}^{\top} (\Id + \mat{\hat{R}}^{-\top}\mat{E}_1) (\Id + \mat{E}_2\mat{\hat{R}}^{-1}) \mat{\hat{R}} \vec{\hat{d}}_i = \mat{A}^\top (\vec{r}(\vec{\hat{x}}_i)-\vec{r}(\vec{x})) + \mat{A}^\top \vec{f}_i + \vec{k}_i.
\end{equation*}
By \cref{prop:singular_values_fp}, we know that $\mat{\hat{R}}^{-\top}\mat{A}^{\top}$, $(\Id + \mat{R}^{-\top}\mat{E}_1)^{-1}$, and $(\Id + \mat{E}_2\mat{\hat{R}}^{-1})^{-1}$ all have norms which are at most $2$ assuming the constant $\Cr{const:c1}$ is large enough.
Thus,
\begin{equation} \label{eq:Rd}
  \norm{\mat{\hat{R}}\vec{\hat{d}}_i} \le \C \left(\norm{\vec{r}(\vec{\hat{x}}_i) - \vec{r}(\vec{x})} + \norm{\mat{\hat{R}}^{-\top} (\mat{A}^\top \vec{f}_i + \vec{k}_i)}\right).
\end{equation}
Using our bounds \cref{eq:f_bound,eq:k_bound} for $\vec{f}_i$ and $\vec{k}_i$, we obtain
\begin{equation} \label{eq:RAtfk}
  \norm{\mat{\hat{R}}^{-\top} (\mat{A}^\top \vec{f}_i + \vec{k}_i)} \le \C u\left(\norm{\mat{A}}\norm{\vec{\hat{x}}_i - \vec{x}} + \norm{\mat{A}}\norm{\vec{x}} + \kappa \norm{\vec{r}(\vec{\hat{x}}_i) - \vec{r}(\vec{x})} + \kappa\norm{\vec{r}}\right).
\end{equation}

Finally, we use the following simple bound for $\vec{e}_i$:
\begin{equation} \label{eq:e_bound}
  \norm{\vec{e}_i} \le \Cl{const:e} u \norm{\vec{\hat{x}}_{i+1}} \le \Cr{const:e} u (\norm{\vec{\hat{x}}_i - \vec{x}} + \norm{\vec{x}}).
\end{equation}

Bound the full error $\vec{h}_i$ as
\begin{equation*}
  \norm{\vec{h}_i} \le \norm{\mat{\hat{R}}^{-\top}(\mat{A}^\top \vec{f}_i + \vec{k}_i)} + \norm{\mat{\hat{R}}^{-\top}\vec{g}_i} + \big\|\mat{\hat{R}}\big\| \norm{\vec{e}_i}.
\end{equation*}
Invoke \cref{prop:singular_values_fp,eq:Rg,eq:Rd,eq:RAtfk,eq:e_bound} to obtain
\begin{equation} \label{eq:h_bound}
  \norm{\vec{h}_i} \le \C u \left[ \norm{\mat{A}} (\norm{\vec{\hat{x}}_i - \vec{x}} + \norm{\vec{x}}+\norm{\vec{\hat{x}}_{i+1}-\vec{x}}) + \kappa (\norm{\vec{r}(\vec{\hat{x}}_i)-\vec{r}(\vec{x})} + \norm{\vec{r}(\vec{x})}) \right].
\end{equation}

\subsection{Wrapping up}

Having bounded $\norm{\vec{h}_i}$, we can use \cref{eq:error_bound,eq:residual_bound} to bound the error and residual error, which we give abbreviated notations:
\begin{equation*}
  \err_i \coloneqq \norm{\vec{x} - \vec{\hat{x}}_i}, \quad \rerr_i \coloneqq \norm{\vec{r}(\vec{x}) - \vec{r}(\vec{\hat{x}}_i)}.
\end{equation*}
Also, set 
\begin{equation*}
  E_0 \coloneqq \big\|\mat{\hat{R}} (\vec{x}-\vec{\hat{x}}_0)\big\|.
\end{equation*}
Using these notations, \cref{eq:error_bound,eq:residual_bound} can be written as 
\begin{align*}
  \err_i &\le 3\kappa g^i \frac{E_0}{\norm{\mat{A}}} +  \Cl{const:iteration}\kappa u \sum_{j=1}^{i-1} g^{i-j-1} \left[ \err_j + \norm{\vec{x}} + \frac{\kappa}{\norm{\mat{A}}} (\rerr_j + \norm{\vec{r}(\vec{x})}) \right], \\
  \rerr_i &\le 4 g^i E_0 + \Cr{const:iteration}u\sum_{j=1}^{i-1-j} g^{i-j-1} \left[ \norm{\mat{A}}(\err_j + \norm{\vec{x}}) + \kappa(\rerr_j + \norm{\vec{r}(\vec{x})})  \right].
\end{align*}
The terms involving $\norm{\vec{x}}$ and $\norm{\vec{r}(\vec{x})}$ in the summations can be bounded by extending the summation to all natural numbers $j$ and using the formula for an infinite geometric series, leading to
\begin{align*}
  \err_i &\le 3\kappa g^i \frac{E_0}{\norm{\mat{A}}} \isiam{\!}+\isiam{\!} \Cl{const:iteration2}\kappa u \sum_{j=1}^{i-1} g^{i-j-1} \left[ \err_j + \frac{\kappa}{\norm{\mat{A}}} \rerr_j \right] + \Cr{const:iteration2} \kappa u \left[ \norm{\vec{x}} \isiam{\!}+\isiam{\!} \frac{\kappa}{\norm{\mat{A}}} \norm{\vec{r}(\vec{x})} \right], \\
  \rerr_i &\le 4 g^i E_0 \isiam{\!}+\isiam{\!} \Cr{const:iteration2}u\sum_{j=1}^{i-1-j} g^{i-j-1} \left[ \norm{\mat{A}}\err_j + \kappa\rerr_j)  \right] \isiam{\!}+\isiam{\!} \Cr{const:iteration2} u\left[ \norm{\mat{A}}\norm{\vec{x}} + \kappa \norm{\vec{r}(\vec{x})} \right].
\end{align*}

To solve this recurrence, define
\begin{equation*}
  t_i \coloneqq \err_i + \frac{\kappa}{\norm{\mat{A}}} \rerr_i, \quad v \coloneqq 7\kappa \frac{E_0}{\norm{\mat{A}}}, \quad q \coloneqq 2\Cr{const:iteration2}\kappa u, \quad w \coloneqq q \left[ \norm{\vec{x}} + \frac{\kappa}{\norm{\mat{A}}} \norm{\vec{r}(\vec{x})} \right].
\end{equation*}
Then, we have
\begin{equation} \label{eq:recurrence}
  t_i \le v g^i + q\sum_{j=1}^{i-1} g^{i-j-1} t_j + w.
\end{equation}
With some effort, one can derive a the following bound for the solution of this recurrence:

\begin{proposition}[Solution to recurrence] \label{prop:recurrence}
  Suppose that $t_0,t_1,t_2,\ldots$ are real numbers satisfying the recurrence \cref{eq:recurrence} and suppose that $g,q,g+q \in (0,1)$.
  Then,
  \begin{equation*}
    t_i \le v(g+q)^i + \frac{1-g}{1-(g+q)} w \quad \text{for } i =0,1,2,\ldots.
  \end{equation*}
\end{proposition}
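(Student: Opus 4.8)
The plan is to prove the bound by strong induction on $i$, taking as the induction target the cleanly‑chosen quantity $B_i \coloneqq v(g+q)^i + C\,w$, where $C \coloneqq \frac{1-g}{1-(g+q)}$. The first thing I would record is that $C \ge 1$: since $q>0$ we have $1-g \ge 1-(g+q) > 0$, so the ratio is at least one. This immediately settles the base case $i=0$: in \cref{eq:recurrence} the sum is empty for $i=0$, so $t_0 \le v + w \le v + Cw = B_0$. (Here and throughout I use that $v,w \ge 0$, which is the case for the quantities defined just above \cref{eq:recurrence}; if one wants the proposition stated in full generality this nonnegativity should be added to the hypotheses.)

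For the inductive step, suppose $i\ge 1$ and $t_j \le B_j$ for all $j<i$. Substituting into \cref{eq:recurrence} and separating the two pieces of $B_j$ gives
\[
  t_i \le v g^i + q v \sum_{j=1}^{i-1} g^{\,i-1-j}(g+q)^j + qCw\sum_{j=1}^{i-1} g^{\,i-1-j} + w .
\]
Both sums are explicit. The plain geometric one is $\sum_{j=1}^{i-1} g^{\,i-1-j} = \sum_{k=0}^{i-2} g^k = \frac{1-g^{i-1}}{1-g} \le \frac{1}{1-g}$. For the mixed one, the key identity is $\sum_{j=0}^{n} a^{\,n-j}b^{\,j} = \frac{a^{n+1}-b^{n+1}}{a-b}$ with $a=g$, $b=g+q$ (valid since $q\ne 0$), which gives $\sum_{j=0}^{i-1} g^{\,i-1-j}(g+q)^j = \frac{(g+q)^i-g^i}{q}$; peeling off the $j=0$ term ($=g^{i-1}$) yields $\sum_{j=1}^{i-1} g^{\,i-1-j}(g+q)^j = \frac{(g+q)^i-g^i}{q} - g^{i-1}$. (Both formulas also give the correct value $0$ in the degenerate case $i=1$.)

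Plugging these in, the $v$‑terms collapse: $v g^i + \bigl(v(g+q)^i - v g^i - q v g^{i-1}\bigr) = v(g+q)^i - q v g^{i-1} \le v(g+q)^i$, using $v,g,q\ge 0$. The $w$‑terms are bounded by $w\bigl(1 + \tfrac{qC}{1-g}\bigr)$. The punchline is that $C$ was defined precisely so that $1 + \tfrac{qC}{1-g} = C$ — equivalently, $C$ is the fixed point of $x\mapsto 1 + \tfrac{qx}{1-g}$, which rearranges to $C = \tfrac{1-g}{1-(g+q)}$. Hence $t_i \le v(g+q)^i + C w = B_i$, closing the induction. The residual error $\rerr_i$ and forward error $\err_i$ bounds of \cref{thm:it_sk_forward} then follow by unwinding the substitutions $t_i = \err_i + \tfrac{\kappa}{\norm{\mat{A}}}\rerr_i$, $v = 7\kappa E_0/\norm{\mat{A}}$, and the bound on $E_0$ from \cref{lem:initial-error}.

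I expect the only real friction to be the bookkeeping: getting the summation limits right (the sums start at $j=1$, so the mixed‑sum identity needs its $j=0$ term removed), handling the edge cases $i\in\{0,1\}$ where one or both sums are empty, and tracking signs so that discarding $q v g^{i-1}\ge 0$ is legitimate. Everything past that is a routine finite geometric‑series computation, and choosing $C$ as the fixed point of the affine map makes the $w$‑recursion exactly tight, so no slack is lost.
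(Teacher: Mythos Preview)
Your proof is correct and matches the approach the paper indicates: the paper omits the argument entirely, saying only that ``the proof is a simple inductive argument,'' which is exactly what you carry out. Your observation that the nonnegativity of $v$ and $w$ (true for the specific quantities defined above \cref{eq:recurrence}) is implicitly used is a fair point worth noting.
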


The proof is a simple inductive argument and is omitted.
Plugging in this bound, unpacking the definition of all the symbols, and using the bound \cref{lem:initial-error} on the initial error $E_0$ establishes the two bounds in \cref{thm:it_sk_forward}. \hfill $\proofbox$

\section{Conclusions} \label{sec:conclusions}

This paper is to first to establish that a fast randomized least-squares solver---iterative sketching, in our case---is forward stable, producing solutions of comparable forward error to Householder \QR.
We complement our stability analysis with numerical experiments, demonstrating that iterative sketching can lead to speedups over Householder \QR factorization for large, highly overdetermined least-squares problems.
We expect that our analysis will carry over straightforwardly to versions of iterative sketching with damping and momentum (\cref{sec:damp-mom}).

We hope this work will lead to further investigation into the numerical stability of randomized algorithms.
In particular, our work raises the following questions:
\begin{itemize}
\item Is sketch-and-precondition with the sketch-and-solve initialization forward stable?
\item Is there a fast randomized least-squares solver which is \emph{backward stable}?
\end{itemize}


\section*{Disclaimer}
This report was prepared as an account of work sponsored by an agency of the United States Government. Neither the United States Government nor any agency thereof, nor any of their employees, makes any warranty, express or implied, or assumes any legal liability or responsibility for the accuracy, completeness, or usefulness of any information, apparatus, product, or process disclosed, or represents that its use would not infringe privately owned rights. Reference herein to any specific commercial product, process, or service by trade name, trademark, manufacturer, or otherwise does not necessarily constitute or imply its endorsement, recommendation, or favoring by the United States Government or any agency thereof. The views and opinions of authors expressed herein do not necessarily state or reflect those of the United States Government or any agency thereof.

\bibliographystyle{halpha}
\bibliography{refs}

\newcommand{\etalchar}[1]{$^{#1}$}
\begin{thebibliography}{MNTW24}
\expandafter\ifx\csname url\endcsname\relax
  \def\url#1{\texttt{#1}}\fi
\expandafter\ifx\csname doi\endcsname\relax
  \def\doi#1{\burlalt{doi:#1}{http://dx.doi.org/#1}}\fi
\expandafter\ifx\csname urlprefix\endcsname\relax\def\urlprefix{}\fi
\expandafter\ifx\csname href\endcsname\relax
  \def\href#1#2{#2}\fi
\expandafter\ifx\csname burlalt\endcsname\relax
  \def\burlalt#1#2{\href{#2}{#1}}\fi

\bibitem[AMT10]{AMT10}
Haim Avron, Petar Maymounkov, and Sivan Toledo.
\newblock Blendenpik: {{Supercharging LAPACK}}'s least-squares solver.
\newblock {\em SIAM Journal on Scientific Computing}, 32(3):1217--1236, January
  2010.
\newblock \doi{10.1137/090767911}.

\bibitem[Bha97]{Bha97}
Rajendra Bhatia.
\newblock {\em Matrix Analysis}, volume 169 of {\em Graduate {{Texts}} in
  {{Mathematics}}}.
\newblock {Springer}, New York, 1997.
\newblock \doi{10.1007/978-1-4612-0653-8}.

\bibitem[Bj{\"o}87]{Bjo87}
{\AA}ke Bj{\"o}rck.
\newblock Stability analysis of the method of seminormal equations for linear
  least squares problems.
\newblock {\em Linear Algebra and its Applications}, 88--89:31--48, April 1987.
\newblock \doi{10.1016/0024-3795(87)90101-7}.

\bibitem[Bj{\"o}96]{Bjo96}
{\AA}ke Bj{\"o}rck.
\newblock {\em Numerical Methods for Least Squares Problems}.
\newblock {SIAM}, {Philadelphia}, 1996.
\newblock \doi{10.1137/1.9781611971484}.

\bibitem[BSW14]{BSW14}
Pierre~Baldi Baldi, Peter Sadowski, and Daniel Whiteson.
\newblock Searching for exotic particles in high-energy physics with deep
  learning.
\newblock {\em Nature Communications}, 5(1):4308, July 2014.
\newblock \doi{10.1038/ncomms5308}.

\bibitem[CDD{\etalchar{+}}23]{CDD+23}
Younghyun Cho, James~W. Demmel, Micha{\l} Derezi{\'n}ski, Haoyun Li, Hengrui
  Luo, Michael~W. Mahoney, and Riley~J. Murray.
\newblock Surrogate-based {{Autotuning}} for {{Randomized Sketching
  Algorithms}} in {{Regression Problems}}, August 2023.
\newblock \urlprefix\url{https://arxiv.org/abs/2308.15720v1}.

\bibitem[Coh16]{Coh16}
Michael~B. Cohen.
\newblock Nearly tight oblivious subspace embeddings by trace inequalities.
\newblock In {\em Proceedings of the {{Twenty-Seventh Annual ACM-SIAM
  Symposium}} on {{Discrete Algorithms}}}, pages 278--287. {SIAM}, January
  2016.
\newblock \doi{10.1137/1.9781611974331.ch21}.

\bibitem[DEF{\etalchar{+}}23]{DEF+23}
Mateo D{\'i}az, Ethan~N. Epperly, Zachary Frangella, Joel~A. Tropp, and
  Robert~J. Webber.
\newblock Robust, randomized preconditioning for kernel ridge regression, April
  2023.
\newblock \urlprefix\url{https://arxiv.org/abs/2304.12465v3}.

\bibitem[DM23]{DM23}
Yijun Dong and Per-Gunnar Martinsson.
\newblock Simpler is better: a comparative study of randomized pivoting
  algorithms for {CUR} and interpolative decompositions.
\newblock {\em Advances in Computational Mathematics}, 49(4):66, August 2023.
\newblock \doi{10.1007/s10444-023-10061-z}.

\bibitem[GVL13]{GV13}
Gene~H. Golub and Charles~F. Van~Loan.
\newblock {\em Matrix Computations}.
\newblock {Johns Hopkins Press}, {Baltimore}, fourth edition, 2013.
\newblock \doi{10.1145/2842602}.

\bibitem[Hig02]{Hig02}
Nicholas~J. Higham.
\newblock {\em Accuracy and Stability of Numerical Algorithms}.
\newblock {SIAM}, Philadelphia, 2002.
\newblock \doi{doi.org/10.1137/1.9780898718027}.

\bibitem[KT24]{KT24}
Anastasia Kireeva and Joel~A. Tropp.
\newblock Randomized matrix computations: {{Themes}} and variations, February
  2024.
\newblock \urlprefix\url{https://arxiv.org/abs/2402.17873v2}.

\bibitem[LP21]{LP21}
Jonathan Lacotte and Mert Pilanci.
\newblock Faster least squares optimization, April 2021.
\newblock \urlprefix\url{https://arxiv.org/abs/1911.02675v3}.

\bibitem[MDM{\etalchar{+}}22]{MDM+22}
Riley Murray, James Demmel, Michael~W. Mahoney, N.~Benjamin Erichson, Maksim
  Melnichenko, Osman~Asif Malik, Laura Grigori, Micha{\l} Derezi{\'n}ski,
  Miles~E. Lopes, Tianyu Liang, and Hengrui Luo.
\newblock Randomized numerical linear algebra: {{A}} perspective on the field
  with an eye to software, November 2022.
\newblock \urlprefix\url{https://arxiv.org/abs/2302.11474v2}.

\bibitem[MNTW24]{MNTW23}
Maike Meier, Yuji Nakatsukasa, Alex Townsend, and Marcus Webb.
\newblock Are sketch-and-precondition least squares solvers numerically stable?
\newblock {\em SIAM Journal on Matrix Analysis and Applications}, 2024.
\newblock To appear (preprint available at
  \url{https://arxiv.org/abs/2302.07202v2}).

\bibitem[MT20]{MT20a}
Per-Gunnar Martinsson and Joel~A. Tropp.
\newblock Randomized numerical linear algebra: {{Foundations}} and algorithms.
\newblock {\em Acta Numerica}, 29:403--572, May 2020.
\newblock \doi{10.1017/S0962492920000021}.

\bibitem[OPA19]{OPA19}
Ibrahim~Kurban Ozaslan, Mert Pilanci, and Orhan Arikan.
\newblock Iterative {{Hessian}} sketch with momentum.
\newblock In {\em 2019 IEEE International Conference on Acoustics, Speech and
  Signal Processing}, pages 7470--7474. {IEEE}, 2019.
\newblock \doi{10.1109/ICASSP.2019.8682720}.

\bibitem[PS82]{PS82}
Christopher~C. Paige and Michael~A. Saunders.
\newblock {{LSQR}}: {{An}} algorithm for sparse linear equations and sparse
  least squares.
\newblock {\em ACM Transactions on Mathematical Software}, 8(1):43--71, March
  1982.
\newblock \doi{10.1145/355984.355989}.

\bibitem[PW16]{PW16a}
Mert Pilanci and Martin~J. Wainwright.
\newblock Iterative {{Hessian}} sketch: {{Fast}} and accurate solution
  approximation for constrained least-squares.
\newblock {\em Journal of Machine Learning Research}, 17(1):1842--1879, 2016.
\newblock \urlprefix\url{http://jmlr.org/papers/v17/14-460.html}.

\bibitem[RCR17]{RCR17}
Alessandro Rudi, Luigi Carratino, and Lorenzo Rosasco.
\newblock {{FALKON}}: {{An}} optimal large scale kernel method.
\newblock In {\em Advances in {{Neural Information Processing Systems}}},
  volume~30, 2017.
\newblock
  \urlprefix\url{https://proceedings.neurips.cc/paper/2017/hash/05546b0e38ab9175cd905eebcc6ebb76-Abstract.html}.

\bibitem[RT08]{RT08}
Vladimir Rokhlin and Mark Tygert.
\newblock A fast randomized algorithm for overdetermined linear least-squares
  regression.
\newblock {\em Proceedings of the National Academy of Sciences},
  105(36):13212--13217, September 2008.
\newblock \doi{10.1073/pnas.0804869105}.

\bibitem[Sar06]{Sar06}
Tamas Sarlos.
\newblock Improved approximation algorithms for large matrices via random
  projections.
\newblock In {\em 2006 47th {{Annual IEEE Symposium}} on {{Foundations}} of
  {{Computer Science}}}, pages 143--152. {IEEE}, October 2006.
\newblock \doi{10.1109/FOCS.2006.37}.

\bibitem[SB00]{SB00}
Alex~J. Smola and Peter~L. Bartlett.
\newblock Sparse greedy {{Gaussian}} process regression.
\newblock In {\em Advances in Neural Information Processing Systems},
  volume~13, 2000.
\newblock
  \urlprefix\url{https://proceedings.neurips.cc/paper/2000/hash/3214a6d842cc69597f9edf26df552e43-Abstract.html}.

\bibitem[TYUC19]{TYUC19}
Joel~A. Tropp, Alp Yurtsever, Madeleine Udell, and Volkan Cevher.
\newblock Streaming low-rank matrix approximation with an application to
  scientific simulation.
\newblock {\em SIAM Journal on Scientific Computing}, 41(4):A2430--A2463,
  January 2019.
\newblock \doi{10.1137/18M1201068}.

\bibitem[Wed73]{Wed73}
Per-{\AA}ke Wedin.
\newblock Perturbation theory for pseudo-inverses.
\newblock {\em BIT Numerical Mathematics}, 13(2):217--232, June 1973.
\newblock \doi{10.1007/BF01933494}.

\end{thebibliography}

\appendix 
\newpage

\section{Proof of \texorpdfstring{\cref{fact:sketch-and-solve}}{Fact 4}} \label{app:proofs}
  The first bound $\norm{\vec{r}(\vec{x}_0)} \le (1+\varepsilon)/(1-\varepsilon) \cdot \norm{\vec{r}(\vec{x})}$ is standard \cite[Prop.~5.3]{KT24}.
  We now prove the other two bounds.
  For the optimal solution $\vec{x}$ to the least-squares problem \cref{eq:ls}, the residual $\vec{r}(\vec{x})$ is orthogonal to the range of $\mat{A}$.
  Thus, by the Pythagorean theorem,
  \begin{equation*}
    \begin{split}
      \norm{\vec{r}(\vec{x}_0)} & = \norm{\vec{b} - \mat{A}\vec{x}}^2 + \norm{\mat{A}(\vec{x} - \vec{x}_0)}^2 = \norm{\vec{r}(\vec{x})}^2 + \norm{\vec{r}(\vec{x}) - \vec{r}(\vec{x}_0)}^2.
    \end{split}
  \end{equation*}
  Thus,
  \begin{equation*}
    \norm{\vec{r}(\vec{x}) - \vec{r}(\vec{x}_0)} = \sqrt{\norm{\vec{r}(\vec{x}_0)}^2 - \norm{\vec{r}(\vec{x})}^2} \le \sqrt{\left( \frac{1+\varepsilon}{1-\varepsilon} \right)^2 - 1} \cdot \norm{\vec{r}(\vec{x})} = \frac{2\sqrt{\varepsilon}}{1-\varepsilon} \norm{\vec{r}(\vec{x})}.
  \end{equation*}
  Finally, we obtain
  \begin{equation*}
    \norm{\vec{x} - \vec{x}_0} \le \frac{1}{\sigma_{\rm min}(\mat{A})} \norm{\mat{A}(\vec{x} - \vec{x}_0)} = \frac{1}{\sigma_{\rm min}(\mat{A})} \norm{\vec{r}(\vec{x}) - \vec{r}(\vec{x}_0)} \le \frac{2\sqrt{\varepsilon}}{1-\varepsilon} \frac{\kappa}{\norm{\mat{A}}}\norm{\vec{r}(\vec{x})}.
  \end{equation*}
  This completes the proof. \hfill $\proofbox$

\section{Damping and momentum} \label{sec:damp-mom}
The convergence rate of iterative sketching can be improved by incorporating \emph{damping} and \emph{momentum} \siamorarxiv{\cite{LP21,OPA19}.}{\cite{OPA19,LP21}.}
In our experience, with optimized parameter choices, damping and momentum accelerate iterative sketching by a modest amount (e.g., runtime reduced by a factor of two).
There may be settings (e.g., when memory is limited) where the advantages of these schemes is more pronounced.

Iterative sketching with damping and momentum use an iteration of the form:
\begin{equation} \label{eq:damping_momentum}
  (\mat{S}\mat{A})^\top (\mat{S}\mat{A}) \vec{d}_i = \mat{A}^\top (\vec{b} - \mat{A}\vec{x}_i), \quad \vec{x}_{i+1} \coloneqq \vec{x}_i + \alpha \vec{d}_i + \beta (\vec{x}_i - \vec{x}_{i-1}).
\end{equation}
Let $\mat{S}$ be a subspace embedding for $\range(\mat{A})$ with distortion $\varepsilon$.
We consider two schemes:
\begin{itemize}
\item \textbf{Iterative sketching with damping:} Use parameters 
  \begin{equation} \label{eq:damping}
    \alpha = \frac{(1-\varepsilon^2)^2}{1+\varepsilon^2}, \quad \beta = 0.
  \end{equation}
\item \textbf{Iterative sketching with momentum:} Use parameters
  \begin{equation} \label{eq:momentum}
    \alpha = (1-\varepsilon^2)^2, \quad \beta = \varepsilon^2.
  \end{equation}
\end{itemize}
In \siamorarxiv{\cite{LP21,OPA19}}{\cite{OPA19,LP21}}, it is shown that these parameter choices optimize the rate of convergence.
Iterative sketching with momentum is similar to the Chebyshev semi-iterative method \cite[\S11.2.8]{GV13}, except that the $\alpha$ and $\beta$ parameters remain constant during the iteration.
For both methods, we choose $\vec{x}_0$ to be the sketch-and-solve solution \cref{eq:sketched_ls}.
For a stable implementation, we replace line 9 in \cref{alg:it-sk} by \cref{eq:damping_momentum}.

\begin{theorem}[Damping and momentum] \label{thm:damp_mom}
  Let $\mat{S}$ be a subspace embedding with distortion $\varepsilon \in (0,1)$.
  The iterates $\vec{x}_0,\vec{x}_1,\ldots$ of iterative sketching with damping satisfy
  \begin{equation} \label{eq:damping_momentum_error}
    \norm{\vec{x} - \vec{x}_i} \le C \kappa \,g_{\rm damp}^i\, \frac{\norm{\vec{r}(\vec{x})}}{\norm{\mat{A}}}, \quad \norm{\vec{r}(\vec{x}) - \vec{r}(\vec{x}_i)} \le C \,g_{\rm damp}^i \,\norm{\vec{r}(\vec{x})},
  \end{equation}
  where the convergence rate $g_{\rm damp}$ and prefactor constant $C$ are 
  \begin{equation*}
    g_{\rm damp} = \frac{2 \varepsilon}{1+\varepsilon^2}, \quad C = \frac{2(1+\varepsilon)\sqrt{\varepsilon}}{(1-\varepsilon)^2}.
  \end{equation*}
  For $i\ge 2$, the iterates of iterative sketching with momentum satisfy
  \begin{equation} \label{eq:momentum_error}
    \norm{\vec{x} - \vec{x}_i} \le C'(i-1)\kappa\, g_{\rm mom}^i \frac{\norm{\vec{r}(\vec{x})}}{\norm{\mat{A}}},\: \norm{\vec{r}(\vec{x}) - \vec{r}(\vec{x}_i)} \le C'(i-1) \, g_{\rm mom}^i \norm{\vec{r}(\vec{x})}.
  \end{equation}
  where the convergence rate $g_{\rm mom}$ and prefactor constant $C'$ are
  \begin{equation*}
    g_{\rm mom} = \varepsilon, \quad C' = \frac{8\sqrt{2}(1+\varepsilon)}{(1-\varepsilon)^2\sqrt{\varepsilon}}.
  \end{equation*}
\end{theorem}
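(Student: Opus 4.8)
The plan is to mirror the proof of \cref{thm:it-sk-convergence} in \cref{sec:it-sk-analysis}, replacing the one-step error operator $\mat{G}$ by the one- or two-term operator induced by the damping/momentum recursion \cref{eq:damping_momentum}, and then turning a bound on $\norm{\mat{S}\mat{A}(\vec{x}-\vec{x}_i)}$ into forward- and residual-error bounds by the same final manipulations used there. Write $\mat{R}$ for the R-factor of $\mat{S}\mat{A}$ and $\mat{M} \coloneqq (\mat{A}\mat{R}^{-1})^\top (\mat{A}\mat{R}^{-1})$. Since $\mat{A}^\top\vec{r}(\vec{x})=\vec{0}$, the exact solution $\vec{x}$ is a fixed point of \cref{eq:damping_momentum}; setting $\vec{\eta}_i \coloneqq \mat{R}(\vec{x}-\vec{x}_i)$, so that $\norm{\vec{\eta}_i}=\norm{\mat{S}\mat{A}(\vec{x}-\vec{x}_i)}$, and conjugating the error recursion by $\mat{R}$ gives
\begin{equation*}
  \vec{\eta}_{i+1} = \bigl((1+\beta)\Id - \alpha\mat{M}\bigr)\vec{\eta}_i - \beta\,\vec{\eta}_{i-1}, \qquad \vec{\eta}_1 = (\Id - \alpha\mat{M})\vec{\eta}_0,
\end{equation*}
with $\vec{\eta}_0 = \mat{R}(\vec{x}-\vec{x}_0)$ (the first step carries no momentum). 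By \cref{fact:subspace_embedding} the symmetric matrix $\mat{M}$ has every eigenvalue in $[(1+\varepsilon)^{-2},(1-\varepsilon)^{-2}]$, and this spectral containment is the only property of $\mat{M}$ the argument uses.

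For the damping case $\beta=0$ the recursion collapses to $\vec{\eta}_{i+1}=(\Id-\alpha\mat{M})\vec{\eta}_i$, so $\norm{\vec{\eta}_i}\le\norm{\Id-\alpha\mat{M}}^i\norm{\vec{\eta}_0}$. A short endpoint computation with $\alpha=(1-\varepsilon^2)^2/(1+\varepsilon^2)$ shows $\norm{\Id-\alpha\mat{M}} = \max\{\,|1-\alpha(1+\varepsilon)^{-2}|,\,|1-\alpha(1-\varepsilon)^{-2}|\,\} = 2\varepsilon/(1+\varepsilon^2) = g_{\rm damp}$, the extremal eigenvalues of $\mat{M}$ realizing the maximum by convexity of $\lambda\mapsto|1-\alpha\lambda|$. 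Combining $\norm{\mat{S}\mat{A}(\vec{x}-\vec{x}_i)}\le g_{\rm damp}^i\norm{\mat{S}\mat{A}(\vec{x}-\vec{x}_0)}$ with the sketch-and-solve initial bound $\norm{\mat{S}\mat{A}(\vec{x}-\vec{x}_0)}\le 2\tfrac{1+\varepsilon}{1-\varepsilon}\sqrt{\varepsilon}\,\norm{\vec{r}(\vec{x})}$ used in \cref{sec:it-sk-analysis}, together with the conversions $\norm{\vec{r}(\vec{x})-\vec{r}(\vec{x}_i)}\le\tfrac{1}{1-\varepsilon}\norm{\mat{S}\mat{A}(\vec{x}-\vec{x}_i)}$ and $\norm{\vec{x}-\vec{x}_i}\le\kappa\norm{\mat{A}}^{-1}\norm{\vec{r}(\vec{x})-\vec{r}(\vec{x}_i)}$, yields \cref{eq:damping_momentum_error} with the stated prefactor $C$.

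For the momentum case $\beta=\varepsilon^2$, $\alpha=(1-\varepsilon^2)^2$, diagonalize $\mat{M}=\mat{W}\diag(\lambda_1,\dots,\lambda_n)\mat{W}^\top$ and decompose $\vec{\eta}_i$ in the eigenbasis; each coordinate obeys the scalar recursion $\xi_{i+1}=\mu\,\xi_i-\varepsilon^2\xi_{i-1}$ with $\mu=(1+\varepsilon^2)-\alpha\lambda$. The decisive algebraic fact is that $\alpha\lambda$ ranges only over $[(1-\varepsilon)^2,(1+\varepsilon)^2]$, so $\mu\in[-2\varepsilon,2\varepsilon]$ and $t\coloneqq\mu/(2\varepsilon)\in[-1,1]$; writing $t=\cos\theta$, the recursion solves explicitly as $\xi_i=\varepsilon^{\,i-1}\bigl(\xi_1 U_{i-1}(t)-\varepsilon\,\xi_0 U_{i-2}(t)\bigr)$, where $U_j$ is the Chebyshev polynomial of the second kind and $|U_j(t)|\le j+1$ on $[-1,1]$. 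Bounding $\norm{\vec{\eta}_1}\le\norm{\Id-\alpha\mat{M}}\norm{\vec{\eta}_0}$ and recombining the orthogonal coordinates gives $\norm{\mat{S}\mat{A}(\vec{x}-\vec{x}_i)}\le C''(i-1)\,\varepsilon^i\norm{\mat{S}\mat{A}(\vec{x}-\vec{x}_0)}$ for $i\ge2$; the same sketch-and-solve and singular-value conversions then produce \cref{eq:momentum_error}.

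The routine parts are the $\mat{R}$-conjugation, the endpoint/interval computations that pin down $g_{\rm damp}$ and confirm $\mu\in[-2\varepsilon,2\varepsilon]$, and the final conversions, all imported essentially verbatim from \cref{sec:it-sk-analysis}. The one genuinely delicate step is the momentum bound on $\xi_i$: at the extreme eigenvalues $\lambda=(1\pm\varepsilon)^{-2}$ one has $t=\pm1$, where the characteristic roots of the scalar recursion coalesce into a double root of modulus exactly $\varepsilon$, so a "diagonalize the $2\times2$ companion matrix and bound its eigenvalues" argument degenerates because the eigenvector matrix becomes singular — the linear-in-$i$ growth must instead be extracted from the explicit Chebyshev solution, which is exactly where the $(i-1)$ prefactor in \cref{eq:momentum_error} originates, and obtaining a clean bound uniform over the whole spectral interval is the main obstacle.
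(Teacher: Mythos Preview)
Your proposal is correct and follows essentially the same route as the paper: conjugate the error recursion by $\mat{R}$, reduce to a spectral question about $\mat{M}=(\mat{A}\mat{R}^{-1})^\top(\mat{A}\mat{R}^{-1})$ via \cref{fact:subspace_embedding}, and finish with the sketch-and-solve initial bound and the $\tfrac{1}{1-\varepsilon}$ and $\sigma_{\min}(\mat{A})^{-1}$ conversions from \cref{sec:it-sk-analysis}. The damping case is identical in both treatments.

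The one genuine difference is in the momentum bookkeeping. The paper stacks $(\vec{\eta}_i,\vec{\eta}_{i-1})$ into a $2n$-vector, block-diagonalizes the resulting $2n\times 2n$ companion matrix $\mat{T}$ into $2\times 2$ blocks $\mat{L}_j=\bigl[\begin{smallmatrix}\lambda_j & -\beta\\ 1 & 0\end{smallmatrix}\bigr]$, and computes $\mat{L}_j^i$ explicitly via its eigenvalues $\rho_j^\pm$, obtaining $|\rho_j^\pm|=\sqrt{\beta}=\varepsilon$ and then $\norm{\mat{L}_j^i}\le 4(i-1)\varepsilon^{i-1}$; the defective case $\rho_j^+=\rho_j^-$ is handled by passing to the limit. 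Your Chebyshev-polynomial solution of the scalar three-term recurrence is the same computation in different clothing: with $\rho^\pm=\varepsilon e^{\pm\im\theta}$ one has $\bigl((\rho^+)^i-(\rho^-)^i\bigr)/(\rho^+-\rho^-)=\varepsilon^{\,i-1}U_{i-1}(\cos\theta)$, so the paper's closed-form entries of $\mat{L}_j^i$ are exactly your $U_j$'s. What your packaging buys is that the uniform bound $|U_j(t)|\le j+1$ on $[-1,1]$ covers the endpoint/double-root case without a separate limit argument, which is precisely the delicacy you flagged; what the paper's packaging buys is a single operator-norm bound $\norm{\mat{T}^i}\le 4(i-1)\varepsilon^{i-1}$ acting on the stacked initial vector $(\vec{\eta}_0,\vec{\eta}_0)$, which makes the recombination step and the exact constant $C'=\tfrac{8\sqrt{2}(1+\varepsilon)}{(1-\varepsilon)^2\sqrt{\varepsilon}}$ drop out more directly than from your coordinatewise estimate.
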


\ifsiam \else
\begin{proof} The proof of the bound \cref{eq:damping_momentum_error} concerning the method with damping is identifical to \cref{thm:it-sk-convergence} and is omitted.
  We prove \cref{eq:momentum_error}.
  The error $\vec{x} - \vec{x}_i$ obeys the following recurrence \cite[\S2.2]{OPA19}:
  \iarxiv{\begin{equation*}
    \twobyone{\vec{x} - \vec{x}_{i+1}}{\vec{x}-\vec{x}_i} = (\Id_2 \otimes \mat{R}^{-1})\mat{T}(\Id_2 \otimes \mat{R}) \twobyone{\vec{x}-\vec{x}_i}{\vec{x}-\vec{x}_{i-1}}, \quad \mat{T} = \twobytwo{(1+\beta)\Id - \alpha \mat{R}^{-\top}\mat{A}^\top \mat{A}\mat{R}^{-1}}{-\beta\Id}{\Id}{\mat{0}}.
  \end{equation*}
  Here, $\Id_2$ denotes the $2\times 2$ identity matrix, $\Id$ denotes the $n\times n$ matrix, and $\otimes$ denotes Kronecker product.
}
\isiam{
  \begin{equation*}
    \twobyone{\vec{x} - \vec{x}_{i+1}}{\vec{x}-\vec{x}_i} = (\Id_2 \otimes \mat{R}^{-1})\mat{T}(\Id_2 \otimes \mat{R}) \twobyone{\vec{x}-\vec{x}_i}{\vec{x}-\vec{x}_{i-1}}
  \end{equation*}
  where $\Id_2$ denotes the $2\times 2$ identity matrix, $\Id$ denotes the $n\times n$ matrix, $\otimes$ denotes Kronecker product, and
  \begin{equation*}
    \mat{T} = \twobytwo{(1+\beta)\Id - \alpha \mat{R}^{-\top}\mat{A}^\top \mat{A}\mat{R}^{-1}}{-\beta\Id}{\Id}{\mat{0}}.
  \end{equation*}
}
  Consequently,
\iarxiv{
  \begin{equation*}
    \twobyone{\mat{R}(\vec{x} - \vec{x}_{i+1})}{\mat{R}(\vec{x}-\vec{x}_i)} = \mat{T} \twobyone{\mat{R}(\vec{x}-\vec{x}_i)}{\mat{R}(\vec{x}-\vec{x}_{i-1})} \text{ for } i=1,2,\ldots \implies \twobyone{\mat{R}(\vec{x} - \vec{x}_i)}{\mat{R}(\vec{x}-\vec{x}_{i-1})} = \mat{T}^i \twobyone{\mat{R}(\vec{x} - \vec{x}_{0})}{\mat{R}(\vec{x}-\vec{x}_{-1})}.
  \end{equation*}
}
\isiam{
  \begin{equation*}
    \twobyone{\mat{R}(\vec{x} - \vec{x}_{i+1})}{\mat{R}(\vec{x}-\vec{x}_i)} = \mat{T} \twobyone{\mat{R}(\vec{x}-\vec{x}_i)}{\mat{R}(\vec{x}-\vec{x}_{i-1})} \text{ for } i=1,2,\ldots
  \end{equation*}
  so
  \begin{equation*}
    \twobyone{\mat{R}(\vec{x} - \vec{x}_i)}{\mat{R}(\vec{x}-\vec{x}_{i-1})} = \mat{T}^i \twobyone{\mat{R}(\vec{x} - \vec{x}_{0})}{\mat{R}(\vec{x}-\vec{x}_{-1})}.
  \end{equation*}
}
  Therefore,
  \begin{equation} \label{eq:momentum-res-bound}
    \norm{\mat{R}(\vec{x}-\vec{x}_i)} \le \norm{\twobyone{\mat{R}(\vec{x} - \vec{x}_i)}{\mat{R}(\vec{x}-\vec{x}_{i-1})}} \le \norm{\mat{T}^i} \norm{\twobyone{\mat{R}(\vec{x} - \vec{x}_{0})}{\mat{R}(\vec{x}-\vec{x}_{-1})}}.
  \end{equation}
  By a computation similar to \cref{eq:sketch-and-solve-initial-bound}, we have
\iarxiv{
  \begin{equation} \label{eq:momentum-initial}
    \norm{\mat{R}(\vec{x} - \vec{x}_{0})} = \norm{\mat{R}(\vec{x} - \vec{x}_{-1})} \le 2\frac{1+\varepsilon}{1-\varepsilon}\sqrt{\varepsilon} \norm{\vec{r}(\vec{x})} \implies \norm{\twobyone{\mat{R}(\vec{x} - \vec{x}_{0})}{\mat{R}(\vec{x}-\vec{x}_{-1})}} \le 2\sqrt{2}\frac{1+\varepsilon}{1-\varepsilon}\sqrt{\varepsilon} \norm{\vec{r}(\vec{x})}.
  \end{equation}
}
\isiam{
  \begin{equation*}
    \norm{\mat{R}(\vec{x} - \vec{x}_{0})} = \norm{\mat{R}(\vec{x} - \vec{x}_{-1})} \le 2\frac{1+\varepsilon}{1-\varepsilon}\sqrt{\varepsilon} \norm{\vec{r}(\vec{x})}
  \end{equation*}
  so
  \begin{equation} \label{eq:momentum-initial}
     \norm{\twobyone{\mat{R}(\vec{x} - \vec{x}_{0})}{\mat{R}(\vec{x}-\vec{x}_{-1})}} \le 2\sqrt{2}\frac{1+\varepsilon}{1-\varepsilon}\sqrt{\varepsilon} \norm{\vec{r}(\vec{x})}.
  \end{equation}
}
  All we need to instantiate \cref{eq:momentum-res-bound} is a bound on $\norm{\mat{T}^i}$, which we now proceed to discover.
  
  Consider the Hermitian eigendecomposition
  \begin{equation*}
    (1+\beta)\Id - \alpha \mat{R}^{-\top}\mat{A}^\top \mat{A}\mat{R}^{-1} = \mat{V}\diag(\lambda_j : j=1,2,\ldots,n)\mat{V}^*,
  \end{equation*}
  where $\mat{V}$ is unitary and $\lambda_1,\ldots,\lambda_n$ are real.
  Using the singular value bounds \cref{fact:subspace_embedding} and the values of $\alpha$ and $\beta$, we obtain bounds
  \begin{equation} \label{eq:lambda_bounds}
    |\lambda_j| \le 2\varepsilon \quad \text{for } j=1,2,\ldots,n.
  \end{equation}
  Using this eigendecomposition, $\mat{T}$ can be block diagonalized as
  \begin{equation} \label{eq:T_block_diag}
    \mat{T} = \mat{U} \diag\left(\mat{L}_j : j=1,2,\ldots,n\right) \mat{U}^*, \quad \mat{U} \coloneqq \mat{\Pi}(\Id_2 \otimes \mat{V}), \quad \mat{L}_j \coloneqq \twobytwo{\lambda_j}{-\beta}{1}{0}
  \end{equation}
  where $\mat{\Pi}$ is a perfect shuffle permutation \cite[\S1.2.11]{GV13}.
  Using an eigendecomposition of $\mat{L}_j$, we obtain an exact formula for the powers of $\mat{L}_j$:
\iarxiv{
  \begin{equation*}
    \mat{L}_j^i = \frac{1}{\rho^+_j - \rho^-_j} \twobytwo{ (\rho^+_j)^{i+1} - (\rho^-_j)^{i+1} }{ \beta \big[(\rho^+_j)^{i} - (\rho^-_j)^{i}\big] }{ (\rho^+_j)^{i} - (\rho^-_j)^{i} }{ -\rho^+_j \rho^-_j \big[(\rho^+_j)^{i-1} - (\rho^-_j)^{i-1}\big] },\quad \rho^\pm_j \coloneqq \frac{\lambda_j \pm \sqrt{\lambda_j^2-4\beta}}{2}.
  \end{equation*}
}
\isiam{
  \begin{equation*}
    \mat{L}_j^i = \frac{1}{\rho^+_j - \rho^-_j} \twobytwo{ (\rho^+_j)^{i+1} - (\rho^-_j)^{i+1} }{ \beta \big[(\rho^+_j)^{i} - (\rho^-_j)^{i}\big] }{ (\rho^+_j)^{i} - (\rho^-_j)^{i} }{ -\rho^+_j \rho^-_j \big[(\rho^+_j)^{i-1} - (\rho^-_j)^{i-1}\big] },
  \end{equation*}
  where
  \begin{equation*}
    \rho^\pm_j \coloneqq \frac{\lambda_j \pm \sqrt{\lambda_j^2-4\beta}}{2}.
  \end{equation*}
}
  In the degenerate case $\rho^+_j = \rho^-_j$, this formula should be interpreted as the limit of this expression as $\rho^-_j \to \rho^+_j$.
  Using the bounds \cref{eq:lambda_bounds} on $\lambda_j$, we see that $\sqrt{\lambda_j^2-4\beta}$ is purely imaginary and thus
  \begin{equation*}
    \left| \rho^{\pm}_j \right| = \frac{1}{2} \sqrt{ \lambda_j^2 + \left(4\beta - \lambda_j^2\right)} = \sqrt{\beta} = \varepsilon.
  \end{equation*}
  Using this bound on $|\rho^\pm_j|$, we can bound the norm of $\mat{L}_j^i$ by the absolute sum of its entries, obtaining
  \begin{equation*}
    \norm{\mat{L}_j^i} \le (i-1)(1+\varepsilon^2+2\varepsilon)\varepsilon^{i-1} \le 4(i-1)\varepsilon^{i-1} \quad \text{for } i \ge 2.
  \end{equation*}
  Thus, substituting into \cref{eq:T_block_diag}, we see that $\norm{\mat{T}^i} \le 4(i-1)\varepsilon^{i-1}$.
  Substituting into \cref{eq:momentum-res-bound} and using \cref{eq:momentum-initial}, we obtain
  \begin{equation*}
    \norm{\mat{R}(\vec{x}-\vec{x}_i)} \le 8\sqrt{2} \frac{1+\varepsilon}{1-\varepsilon} \varepsilon^{-1/2} \cdot (i-1)\varepsilon^i \norm{\vec{r}(\vec{x})}.
  \end{equation*}
  The rest of the proof is identical to that of \cref{thm:it-sk-convergence}. \end{proof}
\fi

Damping and momentum meaningfully improve the rate of convergence of iterative sketching.
In particular, the optimally damped \cref{eq:damping} and the optimal momentum \cref{eq:momentum} scheme are geometrically convergent for a subspace dimension with any distortion $\varepsilon < 1$.
In particular, we can use a smaller embedding dimension (say, $d\approx 4n$) while maintaining a convergent scheme.

Choose the damping and momentum parameters, one needs an estimate of the distortion $\varepsilon$ of the embedding $\mat{S}$.
This is a disadvantage of iterative sketching with damping and momentum compared to sketch-and-precondition, which does not have any tunable parameters.
In practice, we recommend using the value $\varepsilon = \sqrt{n/d}$ (or a modest multiple thereof) in the expressions \cref{eq:damping,eq:momentum} for the damping and momentum parameters.
Our recommended setting for $d$, analagous to \cref{eq:dimension}, is
\begin{equation*}
  d \coloneqq \max\left(\left\lceil an \cdot \exp\left(\mathrm{W}\left(\frac{4m}{an^2} \log \left(\frac{1}{u}\right) \right)\right)\right\rceil, 4n\right)
\end{equation*}
with $a = 1$ for iterative sketching with momentum and $a = 2$ for iterative sketching with damping.

\begin{figure}[t]
  \centering
  
  \includegraphics[width=0.32\textwidth]{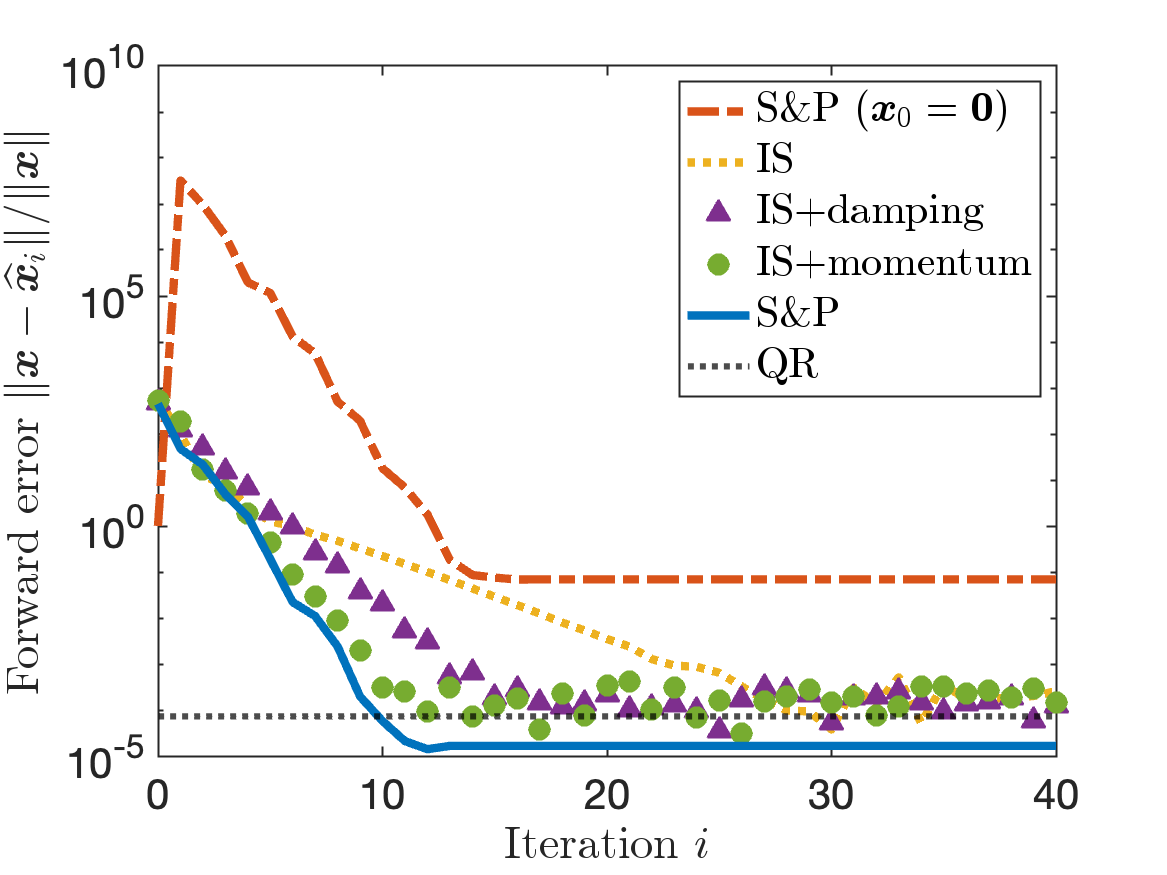}
  \hfill
  \includegraphics[width=0.32\textwidth]{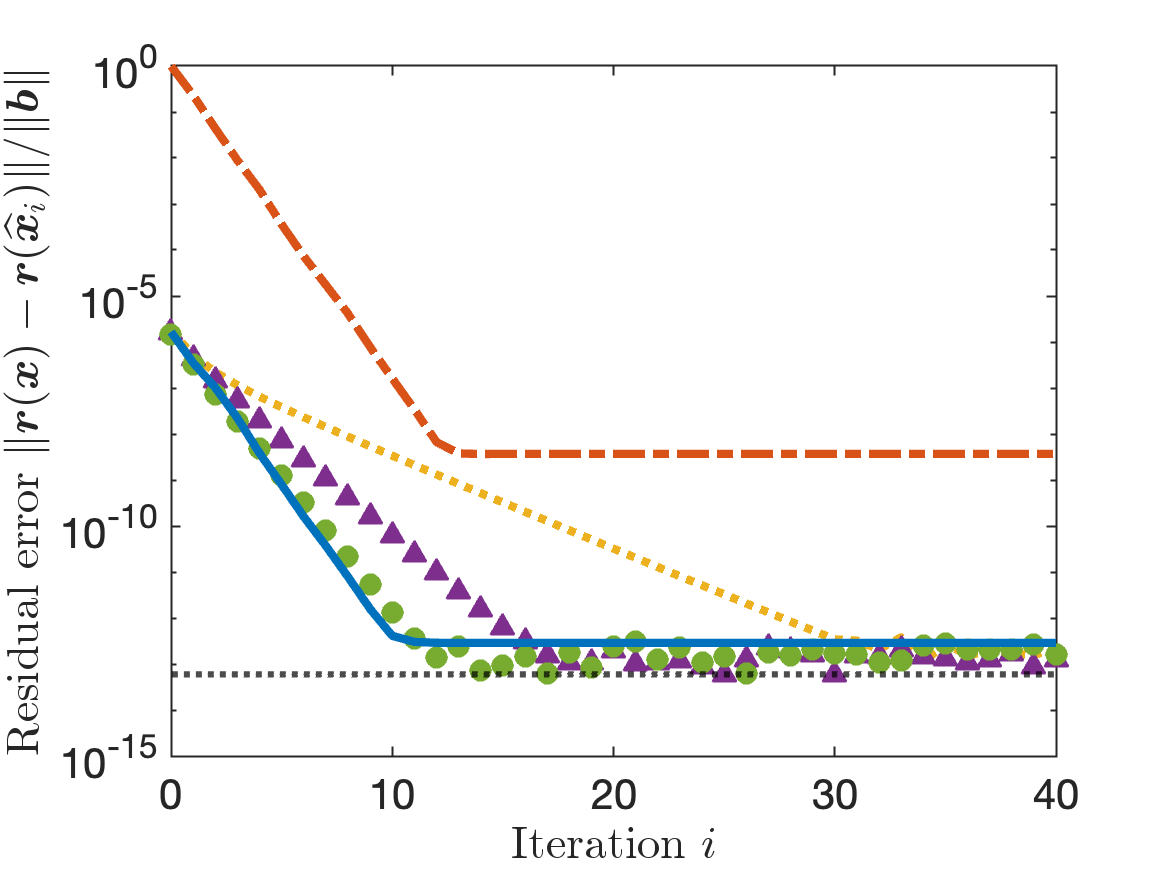}
  \hfill
  \includegraphics[width=0.32\textwidth]{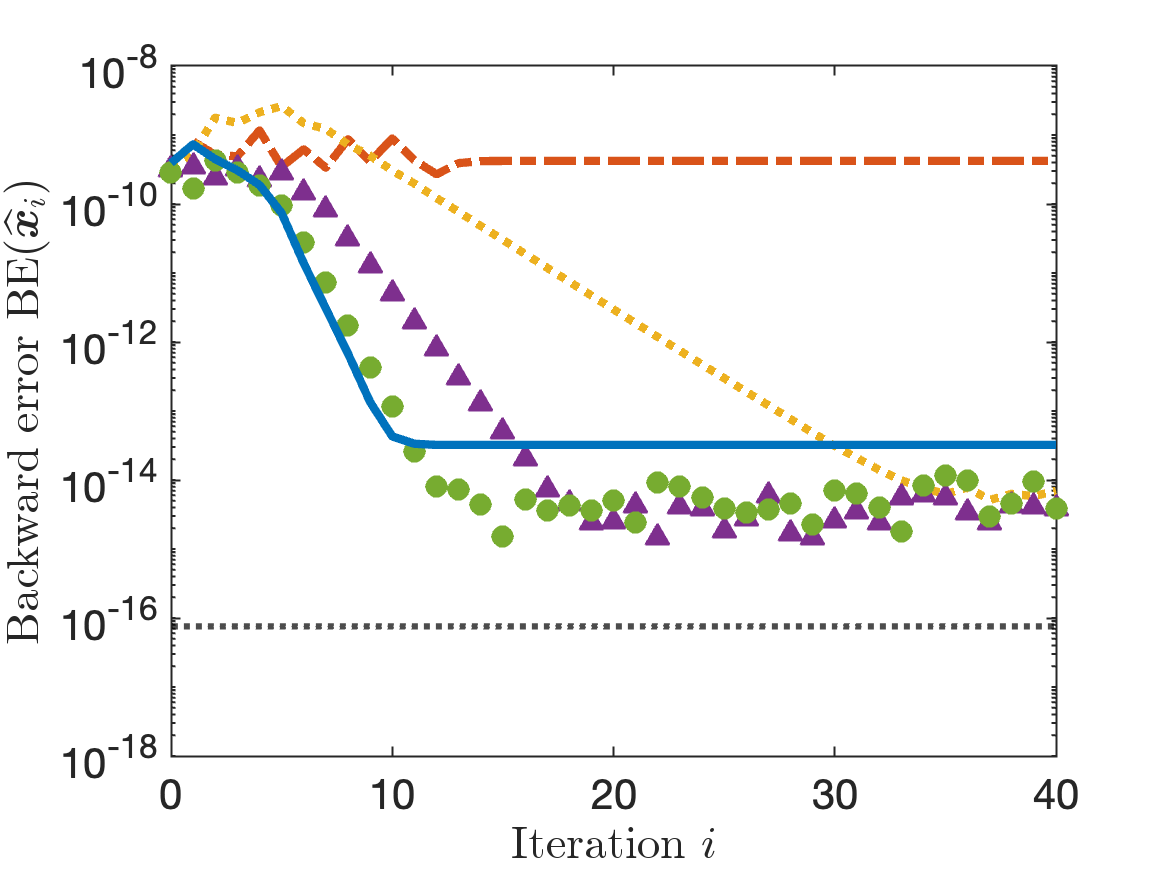} 
    
  \caption{Forward (\emph{left}), residual (\emph{center}), and backward (\emph{right}) error for sketch-and-precondition (\textbf{S{\&}P}) and iterative sketching (\textbf{IS}) with $\kappa = 10^{10}$ and $\norm{\vec{r}(\vec{x})} = 10^{-6}$.
    We consider sketch-and-precondition with both the zero initialization (red dashed) and the sketch-and-solve initialization (blue solid).
    We consider the basic iterative sketching method (yellow dotted) along with variants incorporating damping (purple circles) and momentum (green triangles)
    Reference errors for Householder \QR are shown as black dotted lines.} \label{fig:other_full}
\end{figure}

\Cref{fig:other_full} reproduces \cref{fig:other}, adding iterative sketching with damping and momentum.
Recall that in this figure, all methods are implemented with $d = 20n$.
Similar to the basic method, iterative sketching with damping and momentum appear to be forward---but not backward---stable.
Iterative sketching with damping and momentum converge at a $1.5\times$ and $2.2\times$ faster rate than the basic method, respectively.
In particular, iterative sketching with momentum converges at the same rate as sketch-and-precondition.

\begin{figure}[t]
  \centering
  
  \includegraphics[width=0.48\textwidth]{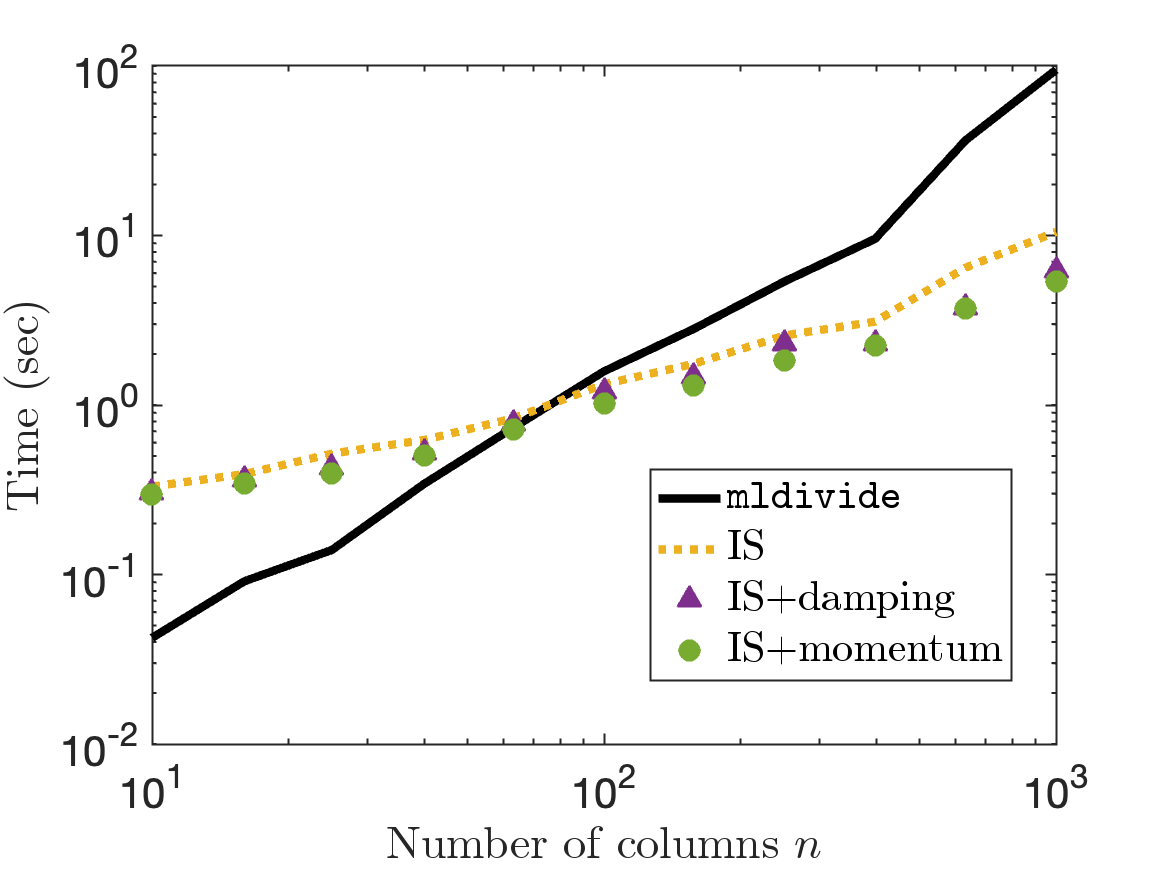}
  \hfill
  \includegraphics[width=0.48\textwidth]{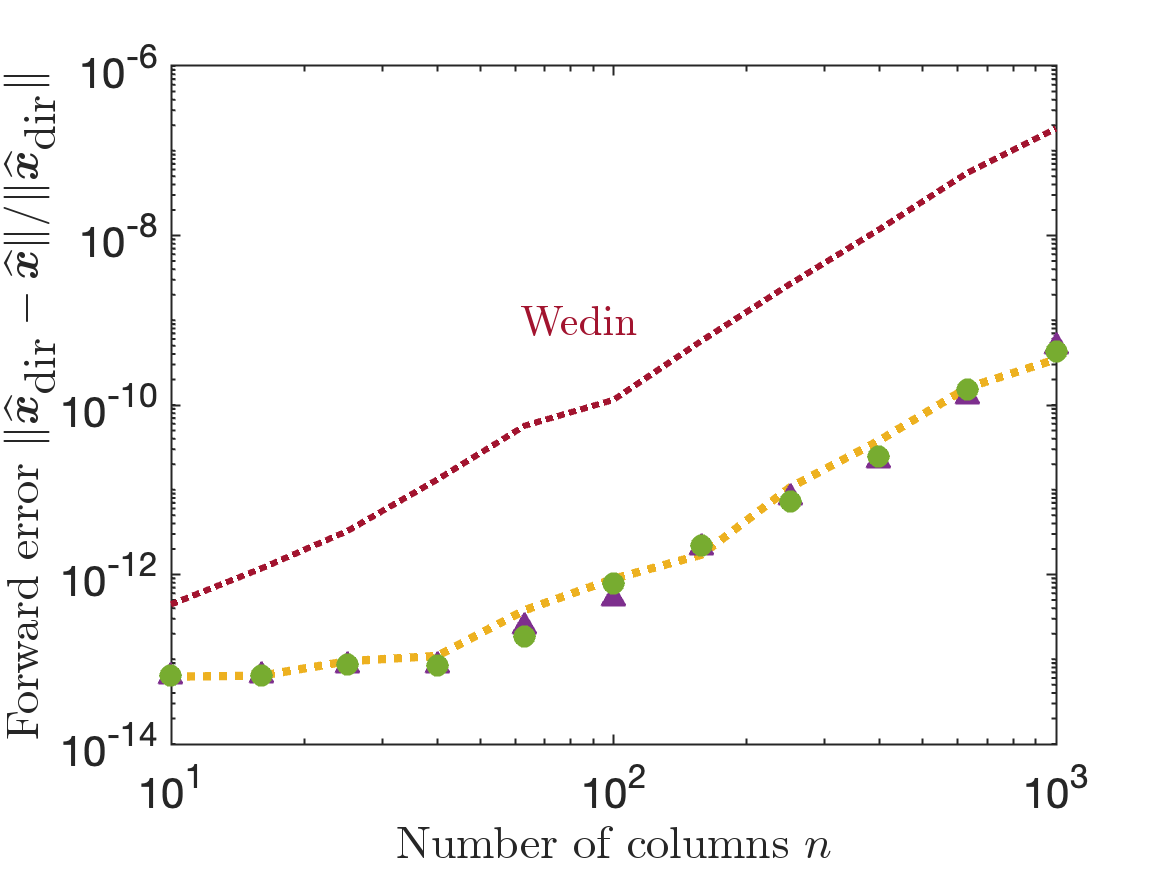}
    
  \caption{Runtime (\emph{left}) and forward error (\emph{right}) for solution of kernel regression problem by \QR (black solid), iterative sketching (yellow dotted), iterative sketching with damping (purple circles), and iterative sketching with momentum (green triangles).} \label{fig:susy_full}
\end{figure}

Similarly, \cref{fig:susy_full} reproduces \cref{fig:susy} with iterative sketching with damping and momentum included.
The accuracies of iterative sketching with damping and momentum are comparable to the basic method, and they are faster ($1.8\times$ and $2.2\times$ faster than the basic method for $n=1000$, respectively).
In particular, \textbf{iterative sketching with momentum is 21$\times$ faster than \texttt{mldivide} for this problem.}


\ifsiam
\section{Proof of \texorpdfstring{\cref{thm:damp_mom}}{Theorem B.1}}
 \hfill $\proofbox$
\fi

\section{Additional figure}

A version of \cref{fig:numerical} with the residual error is shown in \cref{fig:numerical_full}.

\begin{figure}[t]
  \centering
  
  $\norm{\vec{r}(\vec{x})} = 10^{-12}$
  
  \includegraphics[width=0.32\textwidth]{r12_forward}
  \hfill
  \includegraphics[width=0.32\textwidth]{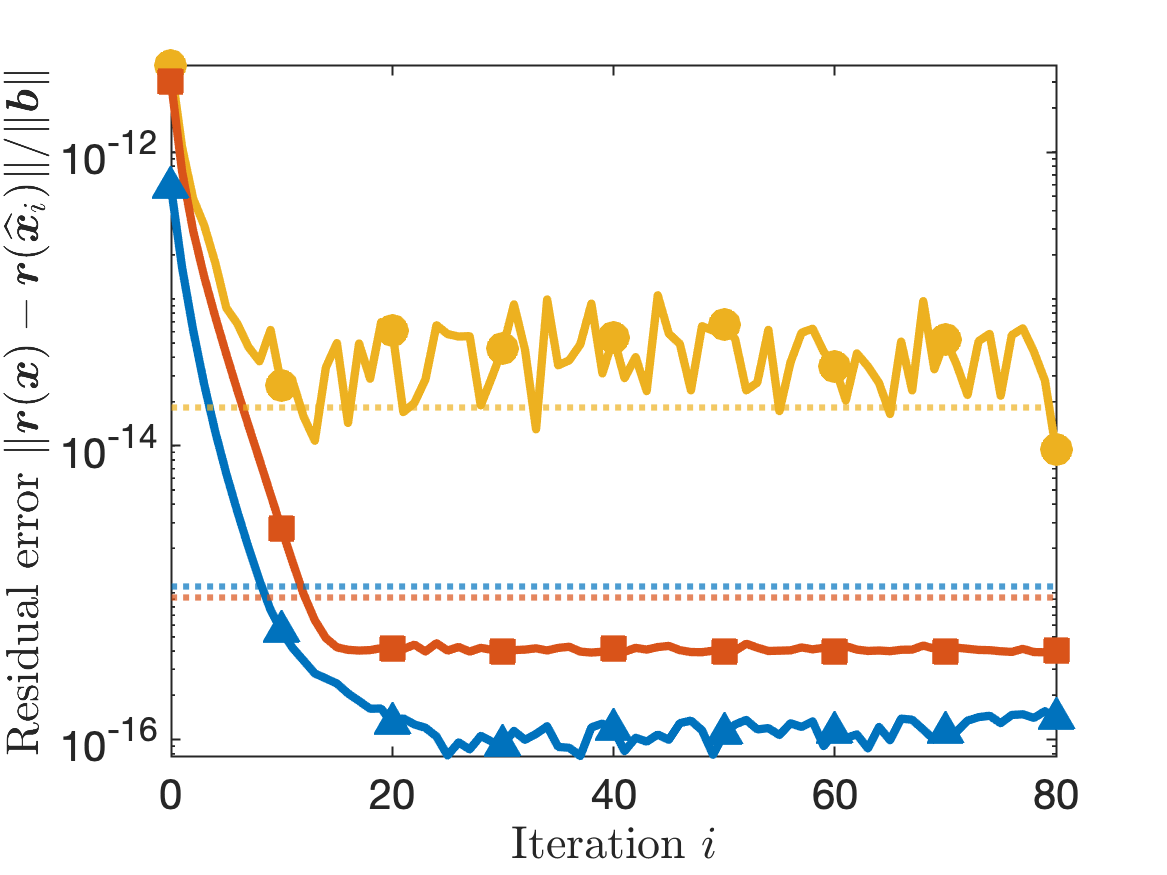}
  \hfill
  \includegraphics[width=0.32\textwidth]{r12_backward} 
  
  $\norm{\vec{r}(\vec{x})} = 10^{-3}$
  
  \includegraphics[width=0.32\textwidth]{r3_forward}
  \hfill
  \includegraphics[width=0.32\textwidth]{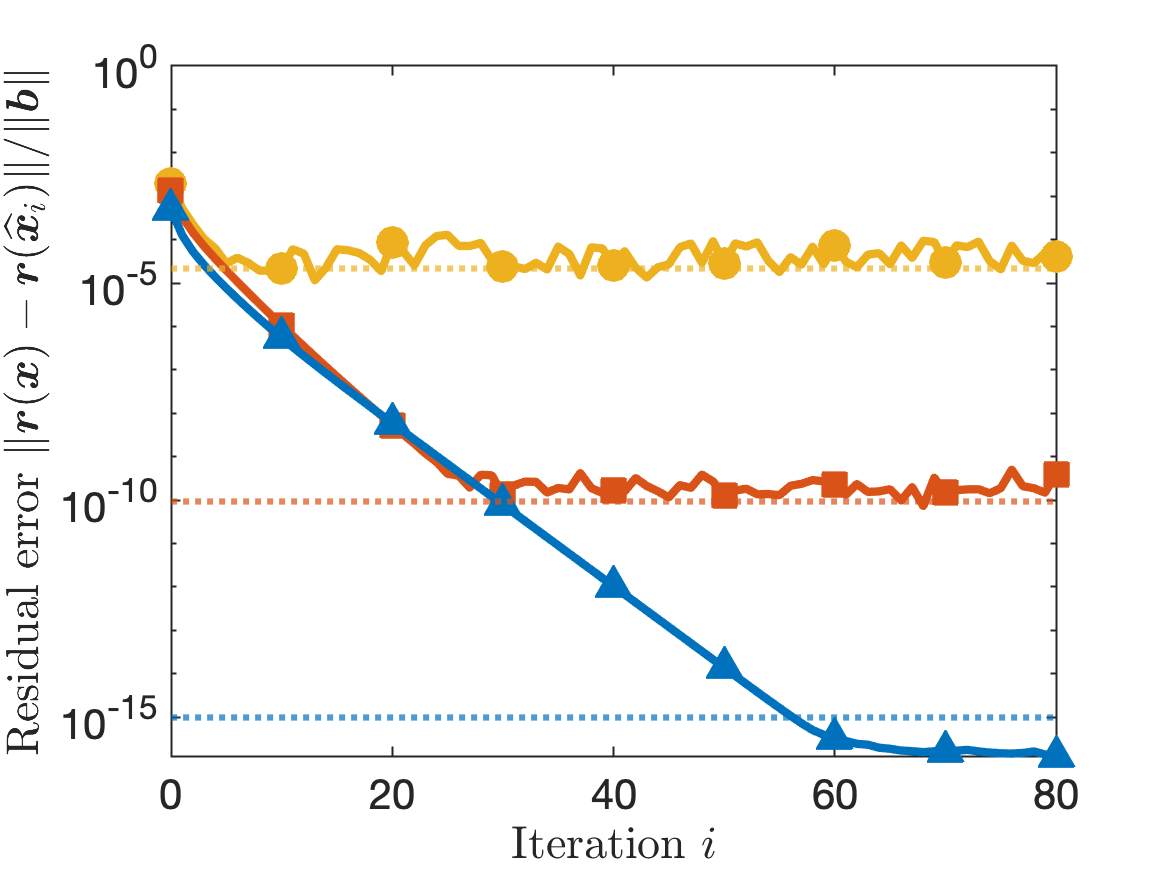}
  \hfill
  \includegraphics[width=0.32\textwidth]{r3_backward} 
  
  \caption{Forward (\emph{left}), residual (\emph{center}), and backward (\emph{right}) error for iterative sketching (solid lines) for condition numbers $\kappa = 10^1$ (blue triangles), $\kappa = 10^{10}$ (red squares), and $\kappa = 10^{15}$ (yellow circles) with two residuals $\norm{\vec{r}(\vec{x})} = 10^{-12}$ (\emph{top}) and $\norm{\vec{r}(\vec{x})} = 10^{-3}$ (\emph{bottom}).
    Reference accuracies for Householder \QR are shown as dotted lines.
    This is a version of \cref{fig:numerical} which adds an additional panel for the residual error.} \label{fig:numerical_full} 
\end{figure}

\end{document}